\theoremstyle{plain}
\newtheorem{theorem}{Theorem}
\numberwithin{theorem}{section}
\newtheorem{lemma}[theorem]{Lemma}
\newtheorem{example}[theorem]{Example}
\newtheorem{corollary}[theorem]{Corollary}
\newtheorem{proposition}[theorem]{Proposition}
\theoremstyle{definition}
\newtheorem{remark}[theorem]{Remark}
\numberwithin{equation}{section}
\title[Continuous-state branching processes with collisions]{Continuous-state branching processes with collisions: first passage times and duality}
\author{Cl\'ement Foucart}\thanks{foucart@math.univ-paris13.fr, Universit\'e Paris 13,  Laboratoire Analyse, G\'eom\'etrie \& Applications UMR 7539 Institut Galil\'ee}
\author{Matija Vidmar}\thanks{matija.vidmar@fmf.uni-lj.si, Department of Mathematics, Faculty of Mathematics and Physics, University of Ljubljana.}
\newcommand{\dd}{\mathrm{d}}
\newcommand{\PP}{\mathbb{P}}
\newcommand\FF{\mathcal{F}}
\newcommand\GG{\mathcal{G}}
\newcommand\EE{\mathbb{E}}
\newcommand\CBC{$\mathrm{CBC}(\Sigma,\Psi)$ }
\newcommand{\ddr}{\mathrm{d}}
\newcommand{\mc}[1]{\ensuremath{\mathscr{#1}}}
\keywords{{Continuous-state branching process}, {branching process with interactions}, {first passage time}, {Laplace duality}, {Lamperti time-change}, {one-dimensional diffusion}, {Lévy-Khintchine function}.}
\subjclass[2020]{Primary: 60J80. Secondary:  60J50, 60J60, 60J70, 60G53, 92D25.}
\begin{document}

\begin{abstract}
We introduce a class of one-dimensional positive Markov processes generalizing continuous-state branching processes (CBs), by taking into account a phenomenon of random collisions. Besides branching, characterized by a general mechanism $\Psi$, at a constant rate in time two particles are sampled uniformly in the population, collide and leave a mass of particles governed by a (sub)critical mechanism $\Sigma$. Such CB processes with collisions (CBCs) are shown to be the only Feller processes without negative jumps satisfying a Laplace duality relationship with one-dimensional diffusions on the half-line.  This generalizes the duality observed for logistic CBs in Foucart \cite{MR3940763}. Via time-change, CBCs are also related to an auxiliary class of Markov processes, called CB processes with spectrally positive migration (CBMs), recently introduced in Vidmar \cite{vidmar2021continuousstate}. We find necessary and sufficient conditions for the boundaries $0$ or $\infty$ to be attracting and for a limiting distribution to exist. The Laplace transform of the latter is provided. Under the assumption that the CBC process does not explode, the Laplace transforms of the first passage times below arbitrary levels are represented with the help of the solution of  a second-order differential equation, whose coefficients express in terms of the L\'evy-Khintchine functions $\Sigma$ and $\Psi$. Sufficient conditions for non-explosion are given. 
\end{abstract}

\maketitle
%

%
\section{Introduction}
\subsection{Motivation}
Imagine a set of particles evolving according to the  following two rules: branching occurs at random, and whenever a particle splits (dies), it begets $k\in \mathbb{Z}_+:=\{0,1,\ldots\}$ new particles  with probability  $p_b(k)$. In the time interval $(t,t+\Delta t)$, the probability for a branching of any given particle to occur is of order $b\Delta t+o(\Delta t)$. Multiple branching events occur in this time interval with probability $o(\Delta t)$.
Particles are also allowed to collide. Whenever a collision between two particles occurs, the pair is replaced by $k+1$ new particles with probability $p_c(k)$, $k\in \mathbb{Z}_+$. The average number of particles left after a collision is assumed to be less than or equal to two: $\sum_{k\in \mathbb{Z}_+}kp_c(k)\leq 1$. The probability of a single collision to occur for any given pair of individuals during $(t,t+\Delta t)$ is of order $c\Delta t+o(\Delta t)$, and for multiple collisions it is $o(\Delta t)$. All collisions and branching events are assumed to occur independently from each other. The infinitesimal generator $\mathcal{L}$ of the continuous-time Markov chain recording the number of particles then takes the form
\[\mathcal{L}f(n):=c\binom{n}{2}\sum_{k=0}^{\infty}\left(f(n+k-1)-f(n)\right)p_c(k)+bn\sum_{k=0}^{\infty}\left(f(n+k-1)-f(n)\right)p_b(k),\] for bounded $f:\mathbb{Z}_+\to \mathbb{R}$ and $n\in \mathbb{Z}_+$, with $\binom{n}{2}=0$ if $n\in\{0,1\}$. Processes with generator $\mathcal{L}$ have  been considered by Chen et al. in  \cite{zbMATH06022670,zbMATH07297549,MR2122478},
and they recently reappeared in the works of  Gonz\'alez-Casanova et al. \cite{zbMATH07458586} and Berzunza Ojeda and Pardo \cite{ojeda2020branching}. The study in the two latter works makes use of a moment-duality relationship with some generalized Wright-Fisher diffusions. 

Our aim is to introduce and initiate the study of the continuous-state space counterparts of these processes and to shed light on some new duality relationships with certain diffusions on the positive half-line whose drift and diffusion functions are of the Lévy-Khintchine form. We adopt here the terminology \textit{collision} coined in \cite{MR2122478}. The terms negative cooperation or pairwise interactions are also used in the literature. 

\subsection{A stochastic equation} In order to explain how collisions are encoded in the continuous-state space setting we shall work in Dawson and Li's framework of SDEs, see \cite{DawsonLi}, and consider the following generalisation of the stochastic equation solved by a continuous-state branching  process (CB) with branching mechanism $\Psi$ (a CB($\Psi$)), for a starting value $z\in [0,\infty)$, 
\begin{align} 
Z_t= \ z &+\sigma\int_{0}^{t}\!\!\sqrt{Z_s}\ddr B_s+b\int_{0}^{t}Z_{s}\ddr s 
 +\int_{0}^{t}\!\!\int_{0}^{Z_{s-}}\!\!\int_0^{1}  h\bar{\mathcal{N}}(\ddr s,\ddr u,\ddr h)+\int_{0}^{t}\!\!\int_0^{Z_{s-}}\!\!\int_1^\infty  h\mathcal{N}(\ddr s,\ddr u,\ddr h)   \nonumber\\
&+a\int_{0}^{t}\!\!Z_s \ddr W_s-\frac{c}{2}\int_{0}^{t}\!\!Z_s^{2}\ddr s+\int_{0}^{t}\! \int_0^{Z_{s-}}\!\!\int_0^{Z_{s-}}\!\!\int_0^\infty h\bar{\mathcal{M}}(\ddr s,\ddr u_1,\ddr u_2,\ddr h). \label{cbcequation}
\end{align}
In \eqref{cbcequation} we adhere to the convention that the lower delimiters are excluded from the integration, while the upper delimiters are included (except for $\infty$, which is of course excluded). The individual ingredients of \eqref{cbcequation} are specified as follows. 

The first line in Eq.~\eqref{cbcequation} represents the branching dynamics and forms the classical stochastic equation solved by a CB (without the need for a finite first moment, see e.g. Ji and Li \cite[Theorem 3.1]{zbMATH07189529}): the parameters $b\in \mathbb{R}$, $\sigma\in [0,\infty)$ are the diffusive coefficients, $B$ is a Brownian motion, finally $\mathcal{N}(\dd s,\dd u,\dd h)$ is an independent Poisson random measure on $[0,\infty)^2\times (0,\infty)$ with intensity $\ddr s\ddr u\pi(\ddr h)$, $\pi$ being a measure on $(0,\infty)$ satisfying $\int_0^\infty 1\wedge h^2 \pi(\ddr h)<\infty$ (a L\'evy measure) and  $\bar{\mathcal{N}}$ stands for the compensated random measure, $\bar{\mathcal{N}}(\ddr s,\ddr u,\ddr h):=\mathcal{N}(\ddr s,\ddr u,\ddr h)-\ddr s \ddr u \pi(\ddr h)$.\footnote{Also in the continuation of this text $\bar M$  will always designate the compensated measure of $M$, whatever the Poisson random measure $M$ may be.} Heuristically, prior to an atom of time $t$ of $\mathcal{N}$, an individual $u$ is chosen uniformly in $[0,Z_{t-}]$ and reproduces or dies. The branching part is governed by the Lévy-Khintchine function \begin{equation}
\label{branchingmechanism}\Psi(x):=\frac{\sigma^2}{2}x^2-b x+\int_{0}^{\infty}\left(e^{-xh}-1+xh\mathbbm{1}_{\{h\leq 1\}}\right)\pi(\ddr h),\quad x\in [0,\infty).
\end{equation}

The second line in Eq.~\eqref{cbcequation} represents  \textit{collisions}: again the parameters $a\in [0,\infty)$, $c\in [0,\infty)$ are the diffusive coefficients, $W$ is a Brownian motion, finally $\mathcal{M}(\dd s,\dd u_1,\dd u_2,\dd h)$ is an independent Poisson random measure on $[0,\infty)^3\times (0,\infty)$ with intensity $\ddr s \ddr u_1  \ddr u_2 \eta(\ddr h)$, $\eta$ being a L\'evy measure on $(0,\infty)$ satisfying $\int_0^{\infty} h\wedge h^2 \eta(\ddr h)<\infty$. The stochastic drivers $(B,\mathcal{N})$ and $(W,\mathcal{M})$  are defined under a common probability $\PP$ and are independent of one-another, i.e. collisions are independent of the branching. Heuristically, prior to an atom of time $t$ of $\mathcal{M}$, two individuals  $u_1$ and $u_2$ are picked uniformly in the population, they collide and are replaced by an amount $h$ of new individuals. The collision part is governed by the Lévy-Khintchine function \begin{equation}
\label{collisionmechanism}\Sigma(x):=\frac{a^2}{2}x^2+\frac{c}{2} x+\int_{0}^{\infty}\left(e^{-xh}-1+xh\right)\eta(\ddr h),\quad x\in [0,\infty).
\end{equation}
We assume the collision mechanism $\Sigma$ is subcritical or critical (i.e. $\Sigma'(0+)=\frac{c}{2}\geq 0$,  which dovetails with  $\sum_{k\in \mathbb{Z}_+}kp_c(k)\leq 1$ above\footnote{Note the sign convention on $c$: it is $-c/2$ that is the ``linear drift'', cf. \eqref{genLevy-2} to follow.\label{footnote:drift}}) but not zero. Thus collisions are either diminishing the number of individuals or keeping it the same on average. One might expect some phenomenon of regulation of the population size when the latter reaches large values. Collisions may for instance prevent or not the growth of the population induced  by supercritical branching dynamics. 

We stress that the compensated versions of the Poisson random measures $\mathcal{M}(\dd s,\dd u_1,\dd u_2,\dd h)$ and $\mathcal{N}(\dd s,\dd u,\dd h)$, the latter  on $h\in (0,1]$, are used in \eqref{cbcequation}. The two stochastic integrals with respect to the independent Brownian motions in \eqref{cbcequation}  can be rewritten respectively as follows: \[
\int_0^t\sqrt{Z_s} \ddr B_s=\int_0^t\int_{[0,Z_s]}\tilde B(\ddr s, \ddr u)  \text{ and } \int_0^tZ_s \ddr W_s=\int_0^t \int_{[0,Z_s]\times [0,Z_s]} \tilde W(\ddr s, \ddr u_1,\ddr u_2),\]
with $\tilde B(\ddr s, \ddr u)$ and $\tilde W(\ddr s, \ddr u_1,\ddr u_2)$  independent Gaussian time-space white noises  on $(0,\infty)\times (0,\infty)$ and $(0,\infty)\times (0,\infty)^2$ based on the Lebesgue measures $\ddr s  \ddr u$ and $\ddr s \ddr u_1  \ddr u_2$, respectively. This allows one to interpret also both diffusive parts  in terms of branching and collision.  We refer to Li and Ma \cite[page 940]{ChunhuaLi} for the  representation of the Feller's branching diffusion part with the white-noise $\tilde{B}(\ddr s,\ddr u)$, see also Pardoux \cite[Section 4.1]{MR3496029} and El Karoui and M\'el\'eard \cite{zbMATH04137113} for a more general framework.

\subsection{Highlights}   The structure of continuous-state branching processes with collisions (CBCs) is  of specific theoretical interest, since, as we shall see, they form a class of  Markov processes with no negative jumps, whose long-term behavior and first passage times can be linked in a dual way to those of certain one-dimensional diffusions. We give now a brief panorama of this.

Once unique existence of $\mathrm{CBC}(\Sigma,\Psi)$,  minimal solution to the stochastic equation \eqref{cbcequation}, has been established, we study three classical problems for this process. First, in Theorem \ref{attractiveboundaries}, we give necessary and sufficient conditions on the branching and collision mechanisms for the \CBC process to converge as time goes to infinity towards its boundaries $0$ or $\infty$ with positive probability (attracting boundaries). We next work under the assumption that the \CBC process does not explode (for which sufficient conditions will be given in Proposition~\ref{propositionsuffcond}) and  obtain,  see Theorem \ref{firstpassagetimestheorem},  a representation of the Laplace transform, at argument $\theta\in (0,\infty)$, of the first passage time below a given level  with the help of an increasing positive function $h_\theta$, solution $h\in C^2((0,\infty))$ to the 
second-order linear differential equation
\begin{equation}\label{generatorG} \mathscr{G}h:=\Sigma h''+(\Sigma'+\Psi)h'=\theta h \text{ on }(0,\infty).
\end{equation} Fundamental results of Feller, see \cite{zbMATH03073345,zbMATH03094597,zbMATH03108758}, on equations of the type  \eqref{generatorG} apply and allow one for instance to understand $h$ in terms of the first passage time of a diffusion $V$ with generator $\mathscr{G}$. 
 In particular, we identify in Theorem \ref{LTextinctiontheorem} the law of the extinction time of the \CBC started from $z$ with that of the explosion time of $V$ started from an independent exponential random variable with parameter $z$. Lastly, we find necessary and sufficient conditions on the mechanisms $\Sigma$ and $\Psi$ for the \CBC to have a limiting distribution, in which case we procure an explicit formula for its Laplace transform, see Theorem \ref{stationarydisttheorem}.

An important feature of CBCs lies in the fact that their generator $\mathscr{L}$ satisfies a Laplace duality  (also called exponential duality)  with the diffusion generator $\mathscr{A}$ given by, for $g\in C^2([0,\infty))$,  \begin{equation}\label{generatorA} \mathscr{A}g:=\Sigma g''-\Psi g'\text{  on $[0,\infty)$};
\end{equation} 
namely,  for  $\{x,z\}\subset [0,\infty)$,
\[\mathscr{L}_ze^{-xz}=\mathscr{A}_xe^{-xz}\]
(the subscripts in the operators indicate which variable they are acting on). When the \CBC process $Z:=(Z_t,t\geq 0)$ does not explode we will establish that also its semigroup satisfies a Laplace duality relationship; to wit, for $\{t,x,z\}\subset [0,\infty)$: 
\[\mathbb{E}_z\big[e^{-xZ_t}\big]=\mathbb{E}_x\big[e^{-zU_t}\big],\]  $U$ being the diffusion on $[0,\infty)$ with $0$ an absorbing state and generator $\mathscr{A}$ ($U$, as it emerges, does not explode [either]), see  Proposition \ref{Laplacedualsemigroup}.  Conversely, it will be established  in Theorem~\ref{thm:laplace-converse} that under a mild assumption on the domains of their generators, CBCs are the only positive Feller processes with no negative jumps and zero an absorbing state, whose generators are in Laplace duality with those of one-dimensional diffusions.

The generator $\mathscr{A}$ of \eqref{generatorA} in turn is in so-called Siegmund duality with the generator $\mathscr{G}$ of \eqref{generatorG}. Under certain conditions, which shall be specified later on, and which entail that $Z$ does not explode, we get it again at the level of the semigroups: for $t\in[ 0,\infty)$ and $\{x,y\} \subset (0,\infty)$,
\[\mathbb{P}_x(U_t<y)=\mathbb{P}_y(V_t>x),\]
see  Siegmund \cite{MR0431386}, Cox and R\"osler \cite{MR724061}, \cite[Section 6]{foucart2021local} and the forthcoming  Proposition \ref{Siegmundualsemigroup}. It will emerge (Remark~\ref{Remark0notexit}) that under the assumption of non-explosion of the CBC  the boundary $0$, like $\infty$, of $U$ is also never regular, and in that case there is therefore in fact no need to stipulate  boundary conditions for $U$ and $V$.

The preceding duality relationships explain somehow why the study of the boundaries of $Z$ is linked to those of the processes $U$ and $V$ (for which the general theory of diffusions applies).  At the level of the semigroups they will actually be used mainly for studying the existence of the limiting distribution of $Z$ and for its  characterization. At the level of the generators they lie however at the core of our study for all CBCs, and are summarized by the following diagram, in which,  for the reader's convenience, we also note the corresponding processes:
\begin{equation*} (Z,\mathscr{L}) \overset{\textbf{Laplace dual}}{\longleftrightarrow} (U,\mathscr{A}) \overset{\textbf{Siegmund dual}}{\longleftrightarrow} (V,\mathscr{G}).
\end{equation*} 
The reader is referred to Kurt and Jansen's survey \cite{duality} for a recent general account of duality. 

\subsection{Literature overview and available examples} Attention has recently been paid to the role of duality in the study of eigenfunctions of generators, see for instance  Griffiths \cite{zbMATH06387922}, Foucart and M\"ohle \cite{zbMATH07544388} and Redig and Sau \cite{zbMATH07202014}.   
We should also like to point out that there is a relatively vast and developing literature on exit problems of  Markov processes with one-sided jumps to which our study can be connected. In this vein we may mention, restricting to continuous space and time, Duhalde et al. \cite{Duhalde} for CBs with immigration (CBIs), Kuznetsov et al. \cite{kkr} for spectrally negative L\'evy processes, Borovkov and Novikov \cite{zbMATH05344878} and Patie \cite{zbMATH02192577} for generalized Ornstein-Uhlenbeck processes, Patie \cite{zbMATH05547564,pierre}  and Vidmar \cite{vidmar2021exit} for positive self-similar Markov processes, see also Landriault et al. \cite{landriault_li_zhang_2017} and Avram et al. \cite{avram_li_li_2021} for some general drawdown/drawup results.

A few examples of CBCs have already appeared in the literature. The pure drift collision mechanism, namely the case $\Sigma(x)=\frac{c}{2}x$, $x\in [0,\infty)$, corresponds to logistic CBs, see Lambert  \cite{MR2134113} and Foucart \cite{MR3940763,foucart2021local}. Duality relationships with the processes $U$ and $V$ were observed in the two latter works, however their role in the problem of characterizing first passage times of the logistic CB $Z$ was not understood therein.  In the pure quadratic collision mechanism, when $\Sigma(x)=\frac{a^2}{2}x^2$,  $x\in [0,\infty)$, CBCs match with CBs in Brownian environment, see Palau and Pardo \cite{zbMATH06684580} and He et al. \cite{zbMATH06963718}. The case $\Sigma(x)=\frac{a^2}{2}x^2+\frac{c}{2}x$,  $x\in [0,\infty)$, has also been recently studied by Leman and Pardo \cite{zbMATH07470626}, by adapting Lambert's method in \cite{MR2134113}, which relies on the study of some Ricatti-type nonlinear equations. 
The duality was not used in the latter article and will simplify the study for us at several levels, especially for the limiting distribution. Lastly, the pure continuous-state collision process, for which $\Psi= 0$ and the first line in Eq.~\eqref{cbcequation} vanishes, corresponds to a polynomial CB process, defined in Li \cite{zbMATH07107490}, with power $\theta=2$. In this case \cite[Theorem 1.8]{zbMATH07107490} ensures that there is no extinction in finite time of the process (i.e. $0$ is inaccessible).

In the subcritical collision case, i.e. $c>0$, the process behaves in many aspects as the logistic CB. The critical collision case when $c=0$ is  however  more involved to study and many different new behaviors in comparison to the subcritical one may exist. This is merely due to the fact that fluctuations of the martingale part in the second line of Eq.~\eqref{cbcequation} are now involved and not the deterministic quadratic drift. 
We will not address here the complete classification of the boundary $\infty$ of CBCs. It seems indeed to  require a study of its own since all types (natural, entrance, exit, regular) may occur. We may refer the interested  reader however to \cite{MR3940763} where the case of logistic CBs is treated.

\subsection{Article structure}
Main results  are gathered in Section~\ref{sectionmainresult}, their proofs  deferred to the continuation of the text. Specifically, in Section~\ref{ProofofTheorem1} we study the stochastic equation \eqref{cbcequation} and show that its solution is related,  via Lamperti time-change, to a class of processes, called CB processes with spectrally positive migration (CBMs) in \cite{vidmar2021continuousstate}. Then, in Section~\ref{sectionproofattraction} we study the attraction of the boundaries, in Section~\ref{sectionstudyfirstpassagetime} the first passage times, and in Section~\ref{sectionduality} the duality relationships and the limiting distribution, as indicated above. 
%

\section{Main results}\label{sectionmainresult}
\subsection{Introduction of CBCs and first properties}
For the Lévy-Khintchine function $\Psi$ we denote by $\mathrm{L}^{\Psi}$ the infinitesimal generator of a spectrally positive Lévy process with Laplace exponent $\Psi$. It acts on a $C^2_0(\mathbb{R})$ (i.e. twice continuously differentiable $f:\mathbb{R}\to \mathbb{R}$ with $f,f',f''$ all vanishing at infinity)  function $f$ as follows:
\begin{equation}\label{genLevy}\mathrm{L}^\Psi f(z)=\frac{\sigma^2}{2} f''(z) +b f'(z)+\int_{0}^{\infty}\left(f(z+h)-f(z)-hf'(z)\mathbbm{1}_{(0,1]}(h)\right)\pi(\ddr h),\quad z\in \mathbb{R}.
\end{equation}
Analogously for $\Sigma$ we have
\begin{equation}\label{genLevy-2}\mathrm{L}^\Sigma f(z)=\frac{a^2}{2} f''(z) -\frac{c}{2} f'(z)+\int_{0}^{\infty}\left(f(z+h)-f(z)-hf'(z)\right)\eta(\ddr h),\quad z\in \mathbb{R}.
\end{equation}
It is clear that if $f:I\to\mathbb{R}$ is of class $C^2_b(I)$ ($b$ stands for $f$ being bounded\footnote{but its first and second derivative \emph{need not be bounded}!}), defined (only) on some interval $I$ of $\mathbb{R}$ unbounded above, then $\mathrm{L}^\Psi $ and $\mathrm{L}^\Sigma $ are  (still) naturally defined on $I$ by the right-hand sides of the preceding displays. We take this for granted in the continuation of the text. 

Below, for notions such as adaptedness, martingale etc. we work with  the augmented natural filtration   $\mathcal{F}$  of $(W,B,\mathcal{N},\mathcal{M})$, unless explicitly noted otherwise.
 
\begin{theorem} \label{theorem:cbc-sde-contruction}
For each starting value $z\in [0,\infty)$ there exists an a.s.\! unique $[0,\infty]$-valued c\`adl\`ag adapted process $Z=(Z_t,t\geq 0)$ such that, setting
\begin{equation}\label{eq.upwards-def}
\zeta_n^+:=\inf\{t\in [0,\infty):Z_t\geq  n\}\text{ for } n\in [0,\infty) \text{ and then }\zeta_\infty:=\lim_{n\to\infty}\zeta_n^+,
\end{equation}
the following two properties hold: 
\begin{enumerate}[(i)]
\item\label{theorem:cbc-sde-contruction:ii} \emph{$Z$ satisfies  \eqref{cbcequation} up to  first explosion}, more precisely,  $\zeta_\infty>0$ and $Z$ satisfies \eqref{cbcequation} for $t\in [0,\zeta_\infty)$ a.s.;
\item\label{theorem:cbc-sde-contruction:i} \emph{$Z$ is absorbed at $\infty$ after its first explosion}, that is to say,  $Z=\infty$  on $[\zeta_\infty,\infty)$ a.s..
%
%
\end{enumerate} 
The law of the process $Z$ is uniquely determined by the triplet $(\Sigma,\Psi,z)$. Furthermore, the process $Z$  is a.s. without negative jumps, has $0$ as an absorbing state, is quasi-left continuous and strong Markov, finally,  for all $f\in C^2_b([0,\infty))$, setting 
\begin{equation}\label{genZ} \mathscr{L}f(z):=z^2\mathrm{L}^{\Sigma}f(z)+z\mathrm{L}^{\Psi}f(z),\quad z\in [0,\infty),\end{equation} then for all $\alpha\in [0,\infty)$  and for all $n\in [0,\infty)$, the  process 
\begin{equation}\label{cbc-construction:i:a}
f(Z_{t\land \zeta_n^+})e^{-\alpha (t\land \zeta_n^+)}-\int_0^{t\land \zeta_n^+}e^{-\alpha s}(\mathscr{L}f(Z_s)-\alpha f(Z_s))\dd s,\quad t\in [0,\infty),
\end{equation}
is a local martingale.
\end{theorem}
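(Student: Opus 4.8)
The plan is to combine a localisation argument with the Lamperti-type time change relating \eqref{cbcequation} to a CB process with spectrally positive migration (CBM). Observe first that the candidate generator \eqref{genZ} factorises as $\mathscr{L}f(z)=z\big(z\mathrm{L}^{\Sigma}f(z)+\mathrm{L}^{\Psi}f(z)\big)$, the bracketed operator being the generator of a CBM driven by the mechanism $\Sigma$ (branching at rate $z$) together with a spectrally positive migration governed by $\Psi$ (at unit rate). Such CBMs have been constructed and analysed in Vidmar \cite{vidmar2021continuousstate}, and I would take their existence, uniqueness and basic path properties as given. The heuristic $Z_t=R_{\theta_t}$, with $R$ a CBM and $\theta$ the time change solving $\theta'_t=R_{\theta_t}$ (equivalently, the inverse of $s\mapsto\int_0^s R_u^{-1}\,\dd u$), exhibits a candidate solution and guides the boundary analysis: extinction of $Z$ corresponds to $R$ reaching $0$, and explosion of $Z$, i.e.\ the event $\zeta_\infty<\infty$, to convergence of the clock $\int_0^\cdot R_u^{-1}\,\dd u$ as $R$ runs off to $\infty$.

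First I would establish existence and pathwise uniqueness up to explosion. Fix $n$ and consider \eqref{cbcequation} stopped at $\zeta_n^+$: on $[0,\zeta_n^+)$ the solution stays below $n$, so the diffusion coefficients $\sigma\sqrt{z}$, $az$ and the drift $bz-\tfrac c2 z^2$ are bounded, while the jump integrands, whose $u$- (resp.\ $(u_1,u_2)$-) domains are capped at $n$ (resp.\ $n^2$), are controlled by the integrability assumptions $\int_0^\infty 1\wedge h^2\,\pi(\dd h)<\infty$ and $\int_0^\infty h\wedge h^2\,\eta(\dd h)<\infty$. On this truncated scale the coefficients satisfy the Yamada--Watanabe-type and (for the jump parts) Lipschitz-type conditions of the branching-SDE theory of Dawson and Li \cite{DawsonLi} and Ji and Li \cite{zbMATH07189529}, the square-root branching coefficient being handled exactly as for a classical CB and the new collision terms being Lipschitz after truncation; this yields weak existence together with pathwise uniqueness of the stopped equation, hence, by Yamada--Watanabe, strong existence and uniqueness in law. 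Letting $n\uparrow\infty$, the stopped solutions are consistent and glue into a process on $[0,\zeta_\infty)$ with $\zeta_\infty=\lim_n\zeta_n^+$; setting $Z\equiv\infty$ on $[\zeta_\infty,\infty)$ gives the asserted $[0,\infty]$-valued process. Here $\zeta_\infty>0$ because for $n>z$ the hitting time $\zeta_n^+$ is a.s.\ strictly positive, the coefficients being locally bounded.

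The qualitative properties then follow. Absence of negative jumps is immediate, since every jump of $Z$ is an atom of $\mathcal{N}$ or $\mathcal{M}$ of size $h>0$, the compensators contributing only continuous drift. The state $0$ is absorbing because at $Z_{s-}=0$ every integrand in \eqref{cbcequation} vanishes (the $u$-integrals are over the empty set and $\sqrt{Z_s}=Z_s=0$). The strong Markov property, and the fact that the law of $Z$ depends only on $(\Sigma,\Psi,z)$, follow from uniqueness in law of the time-homogeneous equation with Lévy-type and white-noise drivers, transported through the stopping times $\zeta_n^+$; quasi-left continuity holds because the driving Poisson measures are quasi-left continuous and, before explosion, $Z$ accumulates no jumps and a.s.\ does not jump at any predictable time.

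Finally, for the local martingale property I would apply Itô's formula to $f(Z_{t\wedge\zeta_n^+})e^{-\alpha(t\wedge\zeta_n^+)}$ for $f\in C^2_b([0,\infty))$. Stopping at $\zeta_n^+$ confines $Z$ to $[0,n]$, where $f'$ and $f''$ are bounded, so all the resulting stochastic integrals against $B$, $W$, $\bar{\mathcal N}$ and $\bar{\mathcal M}$ are genuine local martingales and the large-jump integral against $\mathcal N$ on $\{h>1\}$ is controlled by the boundedness of $f$. Collecting the finite-variation terms, the Gaussian parts produce $\tfrac{\sigma^2}2 z f''+\tfrac{a^2}2 z^2 f''$, the drifts produce $bzf'-\tfrac c2 z^2 f'$, and the compensators of the jump integrals produce the two integral terms building up $z\mathrm{L}^{\Psi}f$ (with the indicator $\mathbbm{1}_{\{h\leq 1\}}$) and $z^2\mathrm{L}^{\Sigma}f$; together these reconstitute $\mathscr{L}f(Z_s)$ as in \eqref{genZ}, while the factor $e^{-\alpha s}$ contributes the $-\alpha f(Z_s)$ term, giving exactly \eqref{cbc-construction:i:a}. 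I expect the main obstacle to be the pathwise uniqueness step: the branching part carries the non-Lipschitz coefficient $\sqrt{z}$ while the collision part carries the quadratic coefficient $z^2$ and the genuinely two-dimensional sampling kernel $\int_0^{Z_{s-}}\!\int_0^{Z_{s-}}$, so one must verify that the combined equation still falls within a localised version of the branching-SDE framework, or, alternatively, transfer uniqueness from the CBM $R$ through the time change---taking care that the change of clock is a bijection on $[0,\zeta_\infty)$ and correctly matches the boundaries $0$ and $\infty$.
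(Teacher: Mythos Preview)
Your plan is sound and would work, but the paper proceeds more economically. Rather than localising by hand and then gluing, the authors recognise that \eqref{cbcequation} fits directly into the general SDE framework of Palau and Pardo \cite[Proposition~1]{zbMATH06836271}, which already accommodates a non-Lipschitz square-root diffusion, a locally Lipschitz drift split as $\underline{b_1}-\underline{b_2}$ with $\underline{b_2}$ nondecreasing (this is where the quadratic $-\tfrac{c}{2}z^2$ goes), and compensated Poisson integrals whose kernels are monotone in the state variable. They spell out the dictionary between \eqref{cbcequation} and the notation of \cite{zbMATH06836271} and verify the admissibility conditions (i)--(iv) and the growth/monotonicity conditions (a)--(c) of that reference; the two-dimensional sampling kernel $\int_0^{Z_{s-}}\!\int_0^{Z_{s-}}$ is handled by taking the auxiliary space $\underline{V}$ to include $[0,\infty)^2\times(0,\infty)$. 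Existence, pathwise uniqueness, and the minimal extension to $[0,\infty]$ then come in one stroke from the cited proposition, with no separate gluing step. The remaining properties (no negative jumps, $0$ absorbing, quasi-left continuity, strong Markov, the local martingale via It\^o) are dispatched exactly as you describe.

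Your alternative suggestion of transferring uniqueness from the CBM $R$ through the time change is not the route the paper takes for construction. The time-change appears only afterwards (their Theorem~\ref{thm:time-change}), and in the opposite direction: having already built the CBC $Z$ from the SDE, they time-change it \emph{into} a CBM to exploit the latter's first-passage theory. Going CBM $\to$ CBC directly would require checking that the inverse clock $\omega$ is well-defined and that the resulting process solves \eqref{cbcequation} in the strong sense with respect to suitably transformed drivers --- feasible, but more delicate than simply invoking \cite{zbMATH06836271}.
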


%
We call the process $Z$ the (minimal) $\mathrm{CBC}(\Sigma,\Psi)$. We shall not have occasion to deal with extensions of the minimal process in this paper, so the qualification ``minimal'' will be largely omitted, except for emphasis. We retain the notation introduced in the theorem, namely \eqref{eq.upwards-def}-\eqref{genZ},   and set further 
\begin{equation}\label{eq.first-passage.def}
\zeta_\mathsf{a}^-:=\inf\{t\in [0,\infty): Z_t\leq \mathsf{a}\},\quad \mathsf{a}\in [0,\infty).
\end{equation}
 In order to stress the initial value $z\in [0,\infty)$ we write $\PP_z$ instead of just $\PP$ and correspondingly $\mathbb{E}_z$ rather than just $\mathbb{E}$, being somewhat lax about holding $z$ fixed or variable. In view of the martingale claim surrounding \eqref{cbc-construction:i:a} (with $\alpha=0$) we call $\mathscr{L}$ the generator of the \CBC process $Z$. Glancing at \eqref{genZ}, likewise as for $\mathrm{L}^\Psi $ and $\mathrm{L}^\Sigma$, so too may we (and shall) consider $\mathscr{L}$ as being capable of taking as input any $f:I\to\mathbb{R}$ that is $C^2_b(I)$, defined (only) on some interval $I$ of $[0,\infty)$ unbounded above, returning in this case a map defined on $I$ according to the right-hand side of \eqref{genZ}.

The branching mechanism $\Psi$ may be of two fundamentally different forms: 
either $\Psi(x)\leq 0$ for all $x\geq 0$ so that $-\Psi$ is the Laplace exponent of a subordinator (we include under this designation the constant process) and in which case we say that we are in the subordinator case; or $\Psi(x)> 0$ for some $x>0$ so that $\Psi$ is the Laplace exponent of a spectrally positive L\'evy process (in the narrow sense) or of a negative linear drift. In the subordinator case,  no particle dies after a branching event and the pure CB$(\Psi)$ has non-decreasing sample paths. Unsurprisingly this case is (can be) a little singular also in the present context as the next proposition demonstrates.
\begin{proposition}\label{propositionstatespace} 
Put 
\begin{equation}\label{eq:z-star}
z^*:=\left(\limsup_{u\to\infty} \frac{-\Psi(u)}{\Sigma(u)}\right)\vee 0<\infty.
\end{equation} If $z^*>0$, and if $Z_0=z>z^*$ (resp. $Z_0=z\leq z^*$) a.s., then a.s. $Z_t> z^*$ for all $t\in [0,\infty)$ (resp.  $Z_t\geq z$ for all $t\in [0,\infty)$ and $\liminf_{t\to\infty}Z_t\geq z^*$). Furthermore, one has $z^*>0$ if and only if $\Sigma(x)/x \underset{x\rightarrow \infty}{\longrightarrow} D<\infty$ and $\Psi(x)/x \underset{x\rightarrow \infty}{\longrightarrow} -\mu<0$ (so that $-\Psi$ is the Laplace exponent of subordinator with drift $\mu$ and the Lévy process with Lévy-Khintchine function $\Sigma$ has finite variation), in which case $z^*=\frac{\mu}{D}$. \end{proposition}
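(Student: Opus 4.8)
The plan is to establish the analytic characterization first, because it reveals that $z^*>0$ forces a finite-variation regime in which the pathwise statements become transparent. By convexity of $\Sigma$ the secant slopes $\Sigma(u)/u$ increase to $D:=\lim_{u\to\infty}\Sigma(u)/u\in(0,\infty]$, and a monotone-convergence computation shows $D<\infty$ exactly when $a=0$ and $\int_0^\infty h\,\dd \eta<\infty$, i.e. exactly when the $\Sigma$-process has finite variation, with $D=\tfrac c2+\int_0^\infty h\,\dd\eta$. If $\Psi(x)>0$ for some $x>0$ then $\Psi>0$ near $+\infty$ by convexity, so $-\Psi/\Sigma$ is eventually negative and $z^*=0$; hence $z^*>0$ forces the subordinator case. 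There $\sigma=0$, $-\Psi$ is Bernstein, and (using $\int_0^1 h\,\dd\pi<\infty$, automatic for subordinators) one checks that $-\Psi(u)/u$ decreases to the drift $\mu:=b-\int_0^1 h\,\dd\pi\ge 0$. As the numerator decreases to $\mu$ and the denominator increases to $D$, the ratio is eventually monotone and $\limsup=\lim=\mu/D$ (with $\mu/\infty:=0$); thus $z^*=\mu/D<\infty$, and $z^*>0$ iff $\mu>0$ and $D<\infty$, i.e. iff $\Psi(x)/x\to-\mu<0$ and $\Sigma(x)/x\to D<\infty$. This proves the last sentence and the finiteness $z^*<\infty$.

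For the remaining (pathwise) assertions I may therefore assume the finite-variation subordinator regime $\sigma=a=0$, $\int_0^\infty h\,\dd\eta<\infty$, $\mu,D>0$ and $z^*=\mu/D$. In this regime all compensators in \eqref{cbcequation} are absorbable: combining the drifts $bZ_s\,\dd s$ and $-\tfrac c2 Z_s^2\,\dd s$ with the two compensated Poisson integrals, the equation rewrites, up to first explosion, as
\begin{equation*}
Z_t=z+\int_0^t\beta(Z_s)\,\dd s+J_t,\qquad \beta(y):=\mu y-D y^2=D y(z^*-y),
\end{equation*}
where $J_t:=\int_0^t\!\int_0^{Z_{s-}}\!\int_0^\infty\! h\,\mathcal N(\dd s,\dd u,\dd h)+\int_0^t\!\int_0^{Z_{s-}}\!\int_0^{Z_{s-}}\!\int_0^\infty\! h\,\mathcal M(\dd s,\dd u_1,\dd u_2,\dd h)$ is nondecreasing with $J_0=0$, all its jumps being upward. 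Carrying out this rewriting — verifying that the branching part collapses to $\mu Z_s\,\dd s$ and the collision part to $-D Z_s^2\,\dd s$ — is the one genuinely computational point, and it is where the signs ($\mu>0$, and the $-D<0$ coefficient of $Z^2$) enter.

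The heart of the proof is a comparison with the logistic ODE $\dot Y_t=\beta(Y_t)$, $Y_0=z$, whose bounded, non-exploding solution satisfies $Y_t\downarrow z^*$ with $Y_t>z^*$ for all $t$ when $z>z^*$, and $Y_t\uparrow z^*$ with $Y_t\ge z$ for all $t$ when $0<z\le z^*$. I claim $Z_t\ge Y_t$ on $[0,\zeta_\infty)$. Writing $R_t:=Z_t-Y_t$, one has $R_t=\int_0^t c_s R_s\,\dd s+J_t$ with $c_s:=\mu-D(Z_s+Y_s)$ bounded on each $[0,\zeta_n^+)$; with the continuous finite-variation integrating factor $E_t:=\exp\bigl(\int_0^t c_s\,\dd s\bigr)$ the product rule (no covariation, as $E$ is continuous) gives $\dd(E_t^{-1}R_t)=E_t^{-1}\,\dd J_t\ge 0$, whence $E_t^{-1}R_t=\int_0^t E_s^{-1}\,\dd J_s\ge 0$ and so $R_t\ge 0$; letting $n\to\infty$ and using $Z\equiv\infty$ on $[\zeta_\infty,\infty)$ removes the localization. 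The ODE facts then yield all the claims: for $z>z^*$, $Z_t\ge Y_t>z^*$; for $0<z\le z^*$, $Z_t\ge Y_t\ge z$ and $\liminf_{t\to\infty}Z_t\ge\lim_{t\to\infty}Y_t=z^*$ (the degenerate case $z=0$ being trivial, since then $Z\equiv 0$).

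I expect the comparison step to be the main obstacle: one must handle the quadratic, only locally Lipschitz, drift through localization at the $\zeta_n^+$, and justify the integrating-factor identity cleanly in the presence of the upward jumps of $J$. By contrast the asymptotic characterization is routine Laplace-exponent analysis and the logistic-ODE facts are elementary.
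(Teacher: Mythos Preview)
Your argument is correct and takes a genuinely different route from the paper for the pathwise assertions. The analytic characterization of $z^*$ is handled essentially the same way in both. For the pathwise part, however, the paper does not work with \eqref{cbcequation} directly: it invokes the Lamperti-type time change of Theorem~\ref{thm:time-change}, which represents $Z$ as a time change of a CBM $Y$ (in the present subordinator setting, a stopped CBI), and then imports the deterministic lower bound $Y_t\ge e^{-Dt}z+z^*(1-e^{-Dt})$ from \cite[Proposition~5]{Duhalde}; the claims for $Z$ follow by undoing the time change. Your approach instead stays with the SDE for $Z$, absorbs the compensators into a logistic drift $\beta(y)=Dy(z^*-y)$ in the finite-variation regime, and runs an integrating-factor comparison with the logistic ODE, effectively reproving the cited CBI estimate in situ. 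The gain is a self-contained argument independent of Theorem~\ref{thm:time-change} and of \cite{Duhalde}; the paper's version is shorter precisely because the comparison work has been outsourced to an existing reference.
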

We retain the piece of notation \eqref{eq:z-star}.

\subsection{Classification of attracting boundaries}\label{subsection:classification}
Recall the definition of $\mathscr{G}$ in \eqref{generatorG} and notice that $\Sigma>0$ on $(0,\infty)$ and that the local operator $\mathscr{G}$ is the generator of a certain regular diffusion on $(0,\infty)$. Let then $V:=(V_t)_{t\geq 0}$ be the minimal diffusion with generator $\mathscr{G}$, namely with boundaries $0$ and $\infty$ absorbing if they are accessible.  

 \emph{Throughout the remainder of this paper we fix an arbitrary $x_0\in (0,\infty)$}.  Then set 
\begin{equation}\label{page:S_V}
S_V(x):=\int_{x_0}^{x} \frac{1}{\Sigma(u)}e^{\int_u^{x_0}\frac{\Psi(v)}{\Sigma(v)}\ddr v}\ddr u ,\quad x\in (0,\infty),
\end{equation}
 for the scale function of $V$, see e.g. Karlin and Taylor \cite[Chapter 14, Section 6, page 227]{zbMATH03736679}. By abuse of notation denote  by $S_V$ also its associated Lebesgue-Stieltjes measure on $(0,\infty)$; to wit, for $\mathsf{a}< \mathsf{b}$ from $(0,\infty)$, \begin{equation}\label{scalemeasure} S_V(\mathsf{a},\mathsf{b}]=S_V(\mathsf{b})-S_V(\mathsf{a})=\int_\mathsf{a}^{\mathsf{b}} \frac{1}{\Sigma(x)}e^{\int_x^{x_0}\frac{\Psi(u)}{\Sigma(u)}\ddr u}\ddr x\in (0,\infty),
\end{equation}
which determines $S_V$ uniquely. The measure $S_V$ being locally finite (i.e. finite on compact subsets of $(0,\infty)$), note that if $S_V(0,\mathsf{b}]$ is infinite for some $\mathsf{b}\in (0,\infty)$ then it is so for all $\mathsf{b}\in (0,\infty)$; similarly for $S_V(\mathsf{b},\infty)$. 
Finally,  introduce
\begin{equation}\label{eq:SZ-def}
S_Z(w):=\int_{0}^{\infty}e^{-xw}S_V(\ddr x)= \int_{0}^{\infty}\frac{e^{-xw}}{\Sigma(x)}e^{\int_x^{x_0}\frac{\Psi(u)}{\Sigma(u)}\ddr u}\ddr x,\quad w\in (0,\infty).
\end{equation}

Our next theorem provides necessary and sufficient conditions for the  boundaries $0$ and $\infty$ to be attracting, by which we mean that the process tends towards the boundary with positive probability. These conditions are those of the diffusion $V$ for the boundaries $\infty$ and $0$, respectively. Recall  \eqref{eq:z-star}.
\begin{theorem}[Attracting boundaries]\label{attractiveboundaries}
Let $z^*<\mathsf{a}<z<\infty$.
\begin{enumerate}[(i)]
\item \label{attractiveboundaries:i} If $S_V(0,x_0]=\infty$ then $\mathbb{P}_z(\zeta_\mathsf{a}^-<\zeta_\infty)=1$. 
\item  \label{attractiveboundaries:ii} If $S_V(0,x_0]<\infty$ then $\mathbb{P}_z(\zeta_\mathsf{a}^-<\zeta_\infty)=\frac{S_Z(z)}{S_Z(\mathsf{a})} \in (0,1)$. 
\item \label{attractiveboundaries:iii}$Z_t\underset{t\rightarrow \infty}{\longrightarrow} \infty \text{ with positive $\PP_z$-probability}$ (respectively, $\PP_z$-almost surely) if and only if $S_V(0,x_0]<\infty$ (respectively, $S_V(0,x_0]<\infty$ and $S_V(x_0,\infty)=\infty$).
\item \label{attractiveboundaries:iv} Suppose $\Psi\ne 0$. Then $Z_t\underset{t\rightarrow \infty}{\longrightarrow} 0 \text{ with positive $\PP_z$-probability (respectively, $\PP_z$-almost surely)}$ if and only if $S_V(x_0,\infty)<\infty$ (respectively, $S_V(x_0,\infty)<\infty$ and $S_V(0,x_0]=\infty$). When $S_V(0,\infty)<\infty$,
 then, moreover, $\mathbb{P}_z(Z_t\underset{t\rightarrow \infty}{\longrightarrow} 0)=1-\mathbb{P}_z(Z_t\underset{t\rightarrow \infty}{\longrightarrow} \infty)=\frac{S_Z(z)}{S_Z(0)}\in (0,1)$. 
 \item\label{attractiveboundaries:v} If $\Psi= 0$ then $\PP_z$-almost surely $Z_t\underset{t\rightarrow \infty}{\longrightarrow} 0$.
\end{enumerate}
\end{theorem}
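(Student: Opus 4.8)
The plan is to manufacture, from the scale function $S_V$ of the dual diffusion $V$, explicit (super)harmonic functions for the generator $\mathscr{L}$ of $Z$, to insert them into the local martingale \eqref{cbc-construction:i:a} (taken with $\alpha=0$), and then to run the standard two-sided exit argument: optional stopping between a low level $\mathsf{a}>z^*$ and a high level $n$, followed by $n\to\infty$. The bridge between $S_V$ and $\mathscr{L}$ is the generator Laplace duality $\mathscr{L}_z e^{-xz}=\mathscr{A}_x e^{-xz}$ noted in the introduction, together with the reading of $S_Z$ in \eqref{eq:SZ-def} as the Laplace transform of the scale measure $S_V(\dd x)$.

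First I would verify that $S_Z$ is $\mathscr{L}$-harmonic on $(z^*,\infty)$ whenever it is finite. Writing $p:=\dd S_V/\dd x$, so that $\mathscr{G}S_V=0$ reads $(\Sigma p)'=-\Psi p$, I would interchange $\mathscr{L}_w$ with the integral, use the duality to get $\mathscr{L}_w S_Z(w)=\int_0^\infty \mathscr{A}_x e^{-xw}\,p(x)\,\dd x$, and integrate by parts twice to transfer $\mathscr{A}_x$ onto $p$. The interior term $\int_0^\infty\big((\Sigma p)''+(\Psi p)'\big)e^{-xw}\,\dd x$ vanishes identically since $(\Sigma p)''=-(\Psi p)'$, and the boundary terms collapse, after using $(\Sigma p)'=-\Psi p$, to
\[
\mathscr{L}_w S_Z(w)=-w\,\big[\,e^{\int_x^{x_0}\Psi(v)/\Sigma(v)\,\dd v}\,e^{-xw}\,\big]_{x=0}^{x=\infty}.
\]
This is exactly where $z^*$ enters: the bracket vanishes at $+\infty$ precisely for $w>z^*$ (because $-\Psi/\Sigma<w$ eventually, by \eqref{eq:z-star}), and at $0$ precisely when $\Sigma p\to0$, a condition I would match with $S_V(0,x_0]<\infty$. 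Hence $\mathscr{L}S_Z=0$ on $(z^*,\infty)$ in the regime of claim~(ii), where $S_Z$ is moreover finite, strictly decreasing, with $S_Z(\infty)=0$ and $S_Z(0{+})=S_V(0,\infty)$. When $S_V(0,x_0]=\infty$ one has $S_Z\equiv+\infty$, and I would instead use a nonnegative superharmonic $g$ on $[\mathsf{a},\infty)$ with $g(+\infty)=+\infty$: the renormalised transform $\hat S_Z(w):=\int_0^\infty(e^{-xw_0}-e^{-xw})\,S_V(\dd x)$ for a fixed $w_0\in(z^*,\mathsf{a}]$ whenever it converges (the same calculation, regularised by a cutoff near $0$, giving $\mathscr{L}\hat S_Z=-w\lim_{x\downarrow0}\Sigma(x)p(x)\le0$), and the linear function $w\mapsto w$, with $\mathscr{L}w=-\tfrac{c}{2}w^2-\Psi'(0)w\le0$, in the residual degenerate subcases (where $\Psi'(0)\ge0$).

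Next I would turn harmonicity into exit probabilities. Localising $S_Z$ to a $C^2_b$ function agreeing with it on $[\mathsf{a},n]$ and stopping at $\tau_n:=\zeta_\mathsf{a}^-\wedge\zeta_n^+$, the quantity in \eqref{cbc-construction:i:a} becomes a bounded martingale $S_Z(Z_{t\wedge\tau_n})$: bounded because $Z$ has no negative jumps and therefore creeps continuously down to the exact value $\mathsf{a}$, while $S_Z$, being decreasing, is only diminished by any overshoot above $n$. Using that $Z$ leaves the bounded interval $(\mathsf{a},n)$ in finite time a.s., optional stopping gives
\[
S_Z(z)=S_Z(\mathsf{a})\,\PP_z(\zeta_\mathsf{a}^-<\zeta_n^+)+\EE_z\big[S_Z(Z_{\zeta_n^+})\mathbbm{1}_{\{\zeta_n^+<\zeta_\mathsf{a}^-\}}\big],
\]
and letting $n\to\infty$ the overshoot term is dominated by $S_Z(n)\to0$ while $\zeta_n^+\uparrow\zeta_\infty$; this is claim~(ii). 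When $S_V(0,x_0]=\infty$, the nonnegative supermartingale $g(Z_{t\wedge\tau_n})$ gives $\PP_z(\zeta_n^+<\zeta_\mathsf{a}^-)\le g(z)/g(n)\to0$; since a nonnegative supermartingale with $g(+\infty)=+\infty$ cannot explode, this rules out explosion and yields $\PP_z(\zeta_\mathsf{a}^-\le\zeta_\infty)=1$, which becomes claim~(i) once the event $\{\zeta_\mathsf{a}^-=\zeta_\infty=\infty\}$ is excluded — the point I take up last.

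Finally I would extract the long-term behaviour. The crux — and the step I expect to be the main obstacle — is a regularity input: $Z$ has no finite accumulation level in $(0,\infty)$, so any a.s.\ limit of $Z_t$ equals $0$ or $+\infty$. I would obtain it from the non-degeneracy $\Sigma>0$ on $(0,\infty)$ (equivalently, regularity of $V$, or the Lamperti relation to the CBM); the critical-collision regime $c=0$ taxes this argument most. This input first closes claim~(i): on $\{\zeta_\mathsf{a}^-=\zeta_\infty=\infty\}$ the supermartingale $g(Z_t)$ would converge, forcing $Z_t$ to a finite level $\ge\mathsf{a}$, which is excluded, so $\zeta_\mathsf{a}^-<\zeta_\infty$ a.s. For claim~(iii), if $S_V(0,x_0]<\infty$ then on $\{\zeta_\mathsf{a}^-=\infty\}$ the bounded martingale $S_Z(Z_t)$ converges and the dichotomy forces $Z_t\to\infty$; by claim~(ii) this event has probability $\ge 1-S_Z(z)/S_Z(\mathsf{a})>0$, which tends to $1$ as $\mathsf{a}\downarrow z^*$ exactly when $S_Z(z^*{+})=\infty$, i.e.\ $S_V(x_0,\infty)=\infty$; conversely, if $S_V(0,x_0]=\infty$ then claim~(i) (reaching every level $\mathsf{a}>z^*$, without explosion) gives $\liminf_t Z_t\le z^*$ and $\PP_z(Z_t\to\infty)=0$. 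The mirror analysis at $0$, with $\hat S_Z$ (whose limit $\hat S_Z(0{+})$ is finite iff $S_V(x_0,\infty)<\infty$), gives claim~(iv), including the value $S_Z(z)/S_Z(0)$ when $S_V(0,\infty)<\infty$. For claim~(v), $\Psi=0$ forces $z^*=0$ and $\mathscr{L}w=-\tfrac{c}{2}w^2\le0$, so $Z$ is a nonnegative supermartingale that converges a.s.; since $S_V(0,x_0]=\infty$ holds automatically, claim~(i) gives $\liminf_t Z_t\le 0$, whence $Z_t\to 0$.
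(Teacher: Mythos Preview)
Your route is genuinely different from the paper's. The paper never verifies $\mathscr{L}S_Z=0$ and never runs a direct two-sided exit argument for $Z$; instead it passes through the Lamperti time-change of Theorem~\ref{thm:time-change} to the CBM process $Y$, invokes the known first-passage Laplace transforms for CBMs/CBIs (the formula $\mathbb{E}_z[e^{-\theta\sigma_{\mathsf a}}]=\Phi_\theta(z)/\Phi_\theta(\mathsf a)$ from \cite{vidmar2021continuousstate} and \cite{Duhalde}), and then sends $\theta\downarrow0$. Your idea of reading $S_Z$ as the Laplace transform of $S_V(\dd x)$ and checking harmonicity via the generator duality is attractive, and for part~(ii) the computation goes through: $S_V(0,x_0]<\infty$ does force the boundary term $\lim_{x\downarrow 0}\Sigma(x)p(x)=0$ (since otherwise $p$ would be bounded below by a multiple of $1/\Sigma$, which is non-integrable at $0$), and the vanishing at $x\to\infty$ for $w>z^*$ is exactly the content of \eqref{eq:z-star}.

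However, two regularity inputs are carrying the argument and you do not prove them. First, you assume that $Z$ leaves the bounded interval $(\mathsf a,n)$ in finite time a.s.; without this, optional stopping only gives $S_Z(z)=S_Z(\mathsf a)\PP_z(\zeta^-_{\mathsf a}<\zeta^+_n)+\EE_z[M_\infty;\,\zeta^+_n\le\zeta^-_{\mathsf a}]$ with an uncontrolled contribution from $\{\tau_n=\infty\}$, and the identity~(ii) does not follow. Second, the ``no finite accumulation level'' dichotomy, which you flag as the main obstacle, is precisely what drives parts~(iii)--(iv). The paper obtains both inputs for free from the CBM/CBI picture: the first because the $\Phi_\theta$-formula already encodes exit, the second because CBMs that are not CBs satisfy $\{Y_t\to0\}=\{\sigma_0<\infty\}$ and are transient on $\{\sigma_0=\infty\}$ (Lemmas~\ref{lemmacvii}--\ref{lemmacvi}). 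If you supply these via the Lamperti representation, you have essentially reconstructed the paper's proof; supplying them directly for $Z$ when $\Sigma$ has no diffusion part and $c=0$ is not straightforward.

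A smaller gap: your case-split for~(i) is not watertight. You need either $\hat S_Z<\infty$ or $\mathscr{L}w=-\tfrac{c}{2}w^2-\Psi'(0{+})w$ finite and $\le0$; the latter requires $\Psi'(0{+})\ge0$, but you do not verify that $\hat S_Z$ converges in all remaining supercritical cases (the integrability of $x\,p(x)$ near $0$ depends jointly on the behaviour of $\Sigma$ and $\Psi$ at $0$ and is not automatic).
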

As we have mentioned Theorem \ref{attractiveboundaries} actually states the following correspondences:
\begin{table}[!htpb]
\begin{center}
\begin{tabular}{c|c|l}
Condition & Boundary of $Z$ &  Boundary  of $V$ \\
\hline
$S_V(0,x_0]<\infty$ & $\infty$  attracting  &  $0$ attracting \\
\hline
$\Psi=0$ or $S_V(x_0,\infty)<\infty$ & $0$  attracting  &  $\infty$ attracting\\
\hline
\end{tabular}
\vspace*{2mm}
\caption{Attracting boundaries of $Z$ and $V$}
\label{correspondance}
\end{center}
\end{table}

\vspace*{-3mm}
\begin{remark}\label{carefultransience} 
The convergence towards $\infty$ in Theorem \ref{attractiveboundaries}\eqref{attractiveboundaries:iii}, when $S_V(0,x_0]<\infty$,  hides two different possibilities: the process can either be transient ($\infty$ is attracting, but not accessible) or can explode ($\infty$ is accessible). Indeed the condition $S_V(0,x_0]=\infty$ is not necessary in general for the process to be non-explosive, see Example~\ref{exampleattracting}\eqref{exampleattracting:1} below. In the case $\Sigma(x)=\frac{c}{2}x$,  $x\in [0,\infty)$,  however, the condition $S_V(0,x_0]=\infty$  turns out to be also necessary for non-explosion  \cite[Theorem 3.1]{MR3940763}. 
No transience phenomenon can occur in logistic CBs \cite[Remark 4.9]{MR3940763}; they can only converge to $\infty$ by reaching it. 
\end{remark}

In the non-subordinator case, one can easily check that $S_V(x_0,\infty)<\infty$ always holds. So, by Theorem~\ref{attractiveboundaries}\eqref{attractiveboundaries:iv},  the necessary and sufficient condition for almost sure convergence towards $0$ is then $S_V(0,x_0]=\infty$. In the subordinator case, the condition $S_V(x_0,\infty)<\infty$ may or may  not be satisfied. In other words, collisions can be strong enough ($S_V(x_0,\infty)<\infty$) or not ($S_V(x_0,\infty)=\infty$) for the event of convergence towards $0$ to have positive probability or not. Lastly, in the (sub)critical branching case, one always has $S_V(0,x_0]=\infty$. 


\begin{example}\label{exampleattracting}  
\leavevmode
\begin{enumerate}
\item\label{exampleattracting:1}   Let $a>0$ and $b\in \mathbb{R}$. One of the simplest CBCs is the process with mechanisms \begin{center} $\Sigma(x)=\frac{a^2}{2}x^2$ and $\Psi(x)=-b x$, $x\in [0,\infty)$. \end{center} It satisfies the SDE
\[\ddr Z_t=aZ_t\ddr W_t+b Z_t\ddr t, \ Z_0=z,\]
which corresponds to a geometric Brownian motion, namely  for all $t\geq 0$,
\[Z_t=z\exp\left(\big(b-\frac{a^2}{2}\big)t+aW_t\right).\]
One can directly check that $S_V(0,x_0]<\infty$ if and only if $b>\frac{a^2}{2}$, in which case the process $(Z_t,t\geq 0)$ is transient (and does not explode). We also see that Brownian collisions regulate the deterministic growth, that is to say, $Z_t\underset{t\rightarrow \infty}{\longrightarrow} 0$ a.s., when $\frac{a^{2}}{2}>b$.
\item More generally if $\Psi'(0+)=:-b\in \mathbb{R}$ and $\Sigma(x)\underset{x\rightarrow 0}{\sim} \frac{a}{2}x^{2}$, then $S_V(0,x_0]=\infty$ if and only if $b\leq \frac{a^2}{2}$. If in addition to the latter condition $\Psi(x)> 0$ for some $x>0$, then $S_V(x_0,\infty)<\infty$ and thus $Z_t\underset{t\rightarrow \infty}{\longrightarrow} 0$ a.s.. These results are reminiscent of properties of a CB process in a Brownian environment, see Palau and Pardo \cite[Proposition 2]{zbMATH06684580}.
\item\label{exampleattracting:3} 
Consider $\Sigma(x)=dx^{\alpha}$ with $\alpha\in (1,2)$ and $\Psi(x)=-d'x^{\beta}=:-\Phi(x)$ with $\beta \in (0,1)$, $x\in [0,\infty)$.  Then we have as follows.
\begin{itemize}
\item If $\beta >\alpha-1$, neither $0$ nor $\infty$ is attracting.
\item If $\beta <\alpha-1$,  $0$ and $\infty$ are both attracting.
\item If $\beta=\alpha-1$, $\infty$ is  attracting if and only if $d'/d>\alpha-1$, while $0$ is attracting if and only if $d'/d<\alpha-1$. In the case of equality, $d'/d=\alpha-1$, neither $0$ nor $\infty$ are attracting.
\end{itemize}

\item Finally, consider the case when $\Sigma(x)=dx^{\alpha}$ for all $x\in [0,\infty)$, with $\alpha\in (1,2)$, and a branching mechanism $\Psi$ such that $\Psi'(0+)\in (-\infty,\infty)$. Then $0$ is attracting, and, if moreover $\Psi(x)\geq 0$ for some $x>0$, then $Z_t\underset{t\rightarrow \infty}{\longrightarrow} 0$ a.s..
\end{enumerate}
\end{example}

\subsection{First passage times and extinction}
We turn to the study of the law of the first passage time of the \CBC process $Z$ below a given level.  We first state a sufficient condition ensuring that $Z$ does not explode in finite time (i.e. its boundary $\infty$ is inaccessible). 
\begin{proposition}\label{propositionsuffcond} If $S_V(0,x_0]=\infty$  \text{ or } $\int_{0+}\frac{\ddr x}{-\Psi(x)}=\infty$, then 
the \CBC process does not explode.
\end{proposition}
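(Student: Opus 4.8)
The plan is to treat the two sufficient conditions separately, since they govern genuinely different regimes. The first, $S_V(0,x_0]=\infty$, is diffusion-theoretic and I would read off directly from the attracting-boundary dichotomy already proved. The second, $\int_{0+}\frac{\ddr x}{-\Psi(x)}=\infty$, is a Grey-type condition for the pure branching part and is meaningful precisely when $\Psi<0$ on some $(0,\lambda_0]$; I would establish it by an explicit Lyapunov (Khasminskii) argument that exploits the (sub)criticality of the collision mechanism $\Sigma$.

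For $S_V(0,x_0]=\infty$, the key observation is that explosion entails convergence to $\infty$: by Theorem~\ref{theorem:cbc-sde-contruction}\eqref{theorem:cbc-sde-contruction:i} the process is absorbed at $\infty$ after $\zeta_\infty$, so $\{\zeta_\infty<\infty\}\subseteq\{Z_t\to\infty\}$. Theorem~\ref{attractiveboundaries}\eqref{attractiveboundaries:iii} then gives $\PP_z(Z_t\to\infty)=0$ for every $z>z^*$ when $S_V(0,x_0]=\infty$, whence $\PP_z(\zeta_\infty<\infty)=0$ for such $z$. To reach an arbitrary start $z\le z^*$ (possible only in the subordinator case of Proposition~\ref{propositionstatespace}), I would fix $\ell>z^*$, note that before the first passage above $\ell$ the process stays bounded by $\ell$ and so cannot explode there, and invoke the strong Markov property at that (a.s.\ finite-valued) passage time together with the case $z>z^*$ just treated.

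For the second condition, choose $\lambda_0>0$ with $\Psi<0$ on $(0,\lambda_0]$ and set
\[V(z):=\int_0^{\lambda_0}\frac{1-e^{-\lambda z}}{-\Psi(\lambda)}\,\ddr\lambda,\qquad z\in[0,\infty).\]
This $V$ is nonnegative, increasing, concave and finite, and by monotone convergence $V(z)\uparrow\int_0^{\lambda_0}\frac{\ddr\lambda}{-\Psi(\lambda)}=\infty$ as $z\to\infty$, exactly because $\int_{0+}\frac{\ddr x}{-\Psi(x)}=\infty$. I would then verify two facts about $\mathscr{L}V$ from \eqref{genZ}. First, since $\mathrm{L}^\Sigma$ of \eqref{genLevy-2} applied to a concave increasing function is nonpositive (all three of its terms are $\le0$, using $c\ge0$), the collision contribution satisfies $z^2\mathrm{L}^\Sigma V(z)\le0$. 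Second, using the eigenrelation $\mathrm{L}^\Psi e^{-\lambda\cdot}=\Psi(\lambda)e^{-\lambda\cdot}$ (immediate from \eqref{genLevy} and \eqref{branchingmechanism}) and interchanging $\mathrm{L}^\Psi$ with the $\lambda$-integral, one obtains the exact identity $\mathrm{L}^\Psi V(z)=\int_0^{\lambda_0}e^{-\lambda z}\,\ddr\lambda=\frac{1-e^{-\lambda_0 z}}{z}$, so that $z\,\mathrm{L}^\Psi V(z)\le1$. Together these give $\mathscr{L}V\le1$ on $(0,\infty)$.

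To turn $\mathscr{L}V\le1$ into non-explosion I would use the local martingale \eqref{cbc-construction:i:a} after a localization. For each $n$, replace $V$ by some $V_n\in C^2_b([0,\infty))$ equal to $V$ on $[0,n]$ and concavely flattened to the constant $V(n)$ beyond $n$, so that $V_n\le V$ everywhere while $V_n,V_n',V_n''$ match $V,V',V''$ on $[0,n)$. Since the process stopped at $\zeta_n^+$ samples the generator only at points $Z_s<n$, and there $V_n\le V$ forces $\mathrm{L}^\Psi V_n\le\mathrm{L}^\Psi V$ while $\mathrm{L}^\Sigma V_n\le0$ by concavity, one still has $\mathscr{L}V_n\le1$ at every point visited before $\zeta_n^+$. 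Applying \eqref{cbc-construction:i:a} to $W_n:=1+V_n$ with $\alpha=1$ (so $\mathscr{L}W_n=\mathscr{L}V_n\le1\le W_n$ on $[0,n)$) makes $W_n(Z_{t\land\zeta_n^+})e^{-(t\land\zeta_n^+)}$ a bounded supermartingale; taking expectations and bounding on $\{\zeta_n^+\le t\}$ yields $(1+V(n))e^{-t}\PP_z(\zeta_n^+\le t)\le1+V(z)$, and letting $n\to\infty$ with $V(n)\to\infty$ gives $\PP_z(\zeta_\infty\le t)=0$ for all $t$. The main obstacle I foresee is the Fubini/dominated-convergence justification for interchanging the nonlocal operator $\mathrm{L}^\Psi$ (and differentiation) with the $\lambda$-integral defining $V$, resting on $\int_0^\infty(1\wedge h^2)\pi(\ddr h)<\infty$ and on the integrability of $\lambda/(-\Psi(\lambda))$ near $0$ (finite since $-\Psi(\lambda)/\lambda\to|\Psi'(0+)|\in(0,\infty]$); a secondary, bookkeeping obstacle is to arrange $V_n$ concave and $C^2_b$ while preserving $V_n\le V$ and the local matching on $[0,n)$, which is exactly what makes the overshoot of $Z$ across level $n$ harmless.
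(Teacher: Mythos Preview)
Your proof is correct and follows the paper's own strategy: the case $S_V(0,x_0]=\infty$ is read off from Theorem~\ref{attractiveboundaries}\eqref{attractiveboundaries:iii}, and the case $\int_{0+}\frac{\ddr x}{-\Psi(x)}=\infty$ is handled by a Lyapunov/Khasminskii argument with a test function of the form $\int (1-e^{-\lambda z})\,w(\lambda)\,\ddr\lambda$, on which the collision part of the generator is nonpositive. The difference lies in the choice of weight $w$. The paper takes $w(\lambda)=\frac{\theta}{-\Psi(\lambda)}\exp\!\big(\int_\lambda^{x_0}\frac{\theta}{-\Psi(u)}\ddr u\big)$ on $(0,\rho)$ for a fixed $\theta\in(0,-\Psi'(0+))$, producing an \emph{exact} eigenfunction $\bar f_\theta$ of the pure branching generator (Lemma~\ref{lemma:increaisng-for-cb}), and then deduces $\mathscr{L}\bar f_\theta\le\theta\bar f_\theta$; the supermartingale step gives $\EE_z[e^{-\theta\zeta_\infty}]\le \bar f_\theta(z)/\bar f_\theta(r)\to 0$. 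Your simpler weight $w(\lambda)=\frac{1}{-\Psi(\lambda)}\mathbbm{1}_{(0,\lambda_0]}$ trades the eigenfunction identity for the one-line bound $z\,\mathrm{L}^\Psi V(z)=1-e^{-\lambda_0 z}\le 1$, which is arguably more elementary and sidesteps the finiteness check that the paper must do for $\bar f_\theta$.

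Two small remarks. First, the concavity requirement you impose on the truncations $V_n$ is unnecessary: since $V_n\le V$ with matching value and first two derivatives on $[0,n)$, the same inequality $\mathrm{L}^\Sigma V_n(z)\le\mathrm{L}^\Sigma V(z)\le 0$ follows for $z<n$ exactly as you argue for $\mathrm{L}^\Psi$, so the paper's plain ``minorize and agree locally'' construction already suffices. Second, your separate treatment of starting points $z\le z^*$ is not needed (and the parenthetical ``a.s.\ finite-valued'' is not guaranteed): when $z^*>0$ one necessarily has $-\Psi(x)\sim\mu x$ as $x\downarrow 0$ with $\mu>0$ (Proposition~\ref{propositionstatespace}), hence $\int_{0+}\frac{\ddr x}{-\Psi(x)}=\infty$, and your Lyapunov argument then covers all $z\ge 0$ directly.
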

\begin{remark}\label{H(0)finite} The fact that when $S_V(0,x_0]=\infty$ the process does not explode is a direct consequence of Theorem \ref{attractiveboundaries}\eqref{attractiveboundaries:iii}. Note that if $\int_{0}^{x_0}\frac{\Psi(u)}{\Sigma(u)}\ddr u\in (-\infty,\infty)$ then $S_V(0,x_0]=\infty$. 
The condition $\int_{0+}\frac{\ddr x}{-\Psi(x)}=\infty$ (called Dynkin's condition) is necessary and sufficient for  non-explosion of the CB$(\Psi)$, see e.g. Kyprianou \cite[Theorem 12.3]{Kyprianoubook}. In other words, and it is not surprising in view of their dynamics, collisions are never causing explosion of CBCs. 
\end{remark} 
We will find a representation of the decreasing $\theta$-invariant function of $Z$ with the help of the increasing one of the diffusion $V$. This enables us to get an expression for the Laplace transforms of the first passage times $\zeta_\mathsf{a}^-$, $\mathsf{a}\in (z^*,\infty)$. 
 
\begin{theorem}\label{firstpassagetimestheorem} 
Assume that the \CBC process does not explode and let $\theta\in (0,\infty)$. Put 
\begin{equation}\label{fthetatheorem}
f_\theta(z):=z\int_0^{\infty}e^{-zv}h_\theta(v)\ddr v,\quad z\in (z^*,\infty),
\end{equation} 
the function $h_\theta\in C^{2}((0,\infty))$ being the unique (up to a multiplicative constant\footnote{A multiplicative constant we intend always to be from $(0,\infty)$.}) nonnegative, not  zero, nondecreasing solution $h$ on $(0,\infty)$ to
\begin{equation}\label{eigenfunctionG} \mathscr{G}h=\Sigma h''+(\Sigma'+\Psi)h'=\theta   h.
\end{equation}
Then, for $\mathsf{a}\leq z$ from  $(z^*,\infty)$, 
\begin{equation}\label{LTfirstpassagetime}
\mathbb{E}_z\big[e^{-\theta \zeta^-_\mathsf{a}}\big]=\frac{f_\theta(z)}{f_\theta(\mathsf{a})}.
\end{equation}
\end{theorem}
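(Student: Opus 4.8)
The plan is to establish the first-passage identity \eqref{LTfirstpassagetime} by showing that $\theta\mapsto f_\theta$ is (up to normalization) the unique decreasing $\theta$-invariant function of the generator $\mathscr{L}$, and then deploying the standard optional-stopping argument with the associated exponential martingale. The crux is to verify that $f_\theta$ solves the eigenvalue problem $\mathscr{L}f_\theta=\theta f_\theta$ on $(z^*,\infty)$; once that is in hand, the martingale machinery from Theorem~\ref{theorem:cbc-sde-contruction} does the rest.

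Concretely, I would first carry out the transfer between the two eigenvalue problems via the Laplace transform. Writing $f_\theta(z)=z\int_0^\infty e^{-zv}h_\theta(v)\,\ddr v$, I would compute $\mathscr{L}f_\theta(z)=z^2\mathrm{L}^\Sigma f_\theta(z)+z\mathrm{L}^\Psi f_\theta(z)$ and, using the integration-by-parts/Laplace-duality identity $\mathscr{L}_ze^{-vz}=\mathscr{A}_ve^{-vz}$ with $\mathscr{A}=\Sigma\partial^2-\Psi\partial$ recorded in \eqref{generatorA}, convert the action of $\mathscr{L}$ on $f_\theta$ into the action of $\mathscr{A}$ under the integral sign. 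A further integration by parts (transposing $\mathscr{A}$ to its formal adjoint, which is precisely $\mathscr{G}=\Sigma\partial^2+(\Sigma'+\Psi)\partial$ from \eqref{generatorG}, this being the Siegmund-dual relationship between $\mathscr{A}$ and $\mathscr{G}$) should produce
\begin{equation*}
\mathscr{L}f_\theta(z)=z\int_0^\infty e^{-zv}\,\mathscr{G}h_\theta(v)\,\ddr v+\text{boundary terms}=z\int_0^\infty e^{-zv}\,\theta h_\theta(v)\,\ddr v=\theta f_\theta(z),
\end{equation*}
where I invoke \eqref{eigenfunctionG}, namely $\mathscr{G}h_\theta=\theta h_\theta$. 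The boundary terms at $0$ and $\infty$ must be shown to vanish; this is where the hypotheses that $Z$ does not explode, that $h_\theta$ is nonnegative nondecreasing, and the restriction $z>z^*$ enter, and I expect this to be \emph{the main obstacle}—controlling $h_\theta(v)$ and $h_\theta'(v)$ as $v\downarrow0$ and $v\to\infty$ well enough to justify differentiating under the integral and discarding the boundary contributions. I would lean on Feller's boundary classification for $\mathscr{G}$ to pin down the growth of the unique increasing $\theta$-eigenfunction $h_\theta$.

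Granting $\mathscr{L}f_\theta=\theta f_\theta$, the second half is the probabilistic argument. By the local-martingale claim \eqref{cbc-construction:i:a} applied with $f=f_\theta$ (suitably localized, since $f_\theta$ need only be $C^2_b$ on an interval $I\subset(z^*,\infty)$ unbounded above, which is exactly the generality for which $\mathscr{L}$ was extended), the process $e^{-\theta(t\wedge\zeta_n^+)}f_\theta(Z_{t\wedge\zeta_n^+})$ is a local martingale. Starting from $z>\mathsf{a}>z^*$ and stopping at $\zeta_\mathsf{a}^-\wedge\zeta_n^+\wedge t$, I would use Proposition~\ref{propositionstatespace} (which guarantees $Z_t>z^*$ for all $t$ when $z>z^*$, so the process stays in the region where $f_\theta$ is defined and the mechanisms are well-behaved) together with non-explosion to let $n\to\infty$ and $t\to\infty$. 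Monotonicity and positivity of $h_\theta$ make $f_\theta$ positive and allow me to bound the stopped process so as to pass to the limit by dominated/bounded convergence, obtaining
\begin{equation*}
f_\theta(z)=\mathbb{E}_z\big[e^{-\theta\zeta_\mathsf{a}^-}f_\theta(Z_{\zeta_\mathsf{a}^-})\big]=f_\theta(\mathsf{a})\,\mathbb{E}_z\big[e^{-\theta\zeta_\mathsf{a}^-}\big],
\end{equation*}
where I use that $Z$ has no negative jumps (so it hits the level $\mathsf{a}$ continuously from above, i.e. $Z_{\zeta_\mathsf{a}^-}=\mathsf{a}$ on $\{\zeta_\mathsf{a}^-<\infty\}$) to replace $f_\theta(Z_{\zeta_\mathsf{a}^-})$ by $f_\theta(\mathsf{a})$. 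Rearranging yields \eqref{LTfirstpassagetime}. The no-negative-jumps property is essential here and is supplied by Theorem~\ref{theorem:cbc-sde-contruction}; without it the overshoot would prevent the clean ratio. A final remark would address uniqueness of $h_\theta$ up to a positive multiplicative constant, which follows from Feller's theory since the increasing $\theta$-invariant function of a regular diffusion is determined up to scaling, ensuring $f_\theta$ is well-defined independently of the normalization.
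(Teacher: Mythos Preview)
Your overall strategy matches the paper's: transfer the eigenvalue equation from $\mathscr{G}$ to $\mathscr{L}$ via the Laplace transform and integration by parts, then run optional stopping on the resulting exponential local martingale. The martingale half of your sketch is fine and is exactly what the paper does (Lemma~\ref{lemmaLT}), including the use of spectral positivity to get $Z_{\zeta_\mathsf{a}^-}=\mathsf{a}$.

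There are, however, three genuine gaps in the analytic half. First, you never address why $f_\theta(z)<\infty$ for $z\in(z^*,\infty)$; this is not automatic, since $h_\theta$ can grow exponentially at infinity, and without it your integration-by-parts calculation and the martingale argument both collapse. The paper handles this separately (Lemma~\ref{lemma:f-theta-finite}) by a comparison argument: setting $g=\Sigma h_\theta'$, one bounds $g$ above on $[\mathsf{c},\infty)$ by the solution of a constant-coefficient second-order ODE whose characteristic roots are strictly smaller than $z$ once $\mathsf{c}$ is large enough, and this is precisely where $z^*$ enters. Second, for the boundary terms you invoke Feller's classification only vaguely; the paper's actual argument is more direct and does not use the non-explosion hypothesis at all at this stage. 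At $v\downarrow 0$ one shows $\Sigma(\epsilon)h_\theta'(\epsilon)\to 0$ because otherwise $h_\theta'\sim \mathrm{const}/\Sigma$ near $0$ and $\int_{0+}1/\Sigma=\infty$ would force $h_\theta$ to blow up; at $v\to\infty$ the limit $\lim_n \Sigma(n)h_\theta'(n)e^{-zn}$ is shown to exist in $\mathbb{R}$ for each $z$ in the interior of $\{f_\theta<\infty\}$, and then must vanish because one can decrease $z$ slightly while remaining in that interior. Third, your uniqueness claim for $h_\theta$ is wrong as stated: for a regular diffusion the increasing $\theta$-eigenfunction is \emph{not} unique up to a constant when the lower boundary is regular (see Remark~\ref{Remark0notregular}). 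The paper's logic runs in the opposite direction: the Laplace-transform identity \eqref{LTfirstpassagetime} determines $f_\theta$ on $(z^*,\infty)$ up to a constant on probabilistic grounds, and since the Laplace transform is injective this forces uniqueness of $h_\theta$ (Lemma~\ref{lemmauniquenessh}); the non-regularity of $0$ for $V$ is then a \emph{consequence}, not an input.
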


\begin{remark} When there is no collision, $\Sigma=0$, and we are not in the subordinator case, the ordinary differential equation (o.d.e.) in \eqref{eigenfunctionG} is of  first order and there is a possible singularity at $\rho:=\sup\{x\in [0,\infty): \Psi(x)=0\}$, the largest zero of $\Psi$.  Solving the o.d.e.  gives for $v\in (\rho,\infty)$, $h_\theta(v)=e^{\int_{x_0}^{v}\frac{\theta}{\Psi(u)}\ddr u}$, where (still) $x_0\in (0,\infty)$ is fixed (and arbitrary). In turn we get   $$f_\theta(z)=z\int_\rho^{\infty}e^{-zv}e^{\int_{x_0}^{v}\frac{\theta}{\Psi(u)}\ddr u}\ddr v,\quad z\in(0,\infty),$$
and recover then through Formula \eqref{LTfirstpassagetime} the Laplace transform of the first passage time of  the CB$(\Psi)$, see \cite[Section 6, page 4192]{Duhalde}.
\end{remark}

\begin{remark}

A simple application of Tonelli's theorem ensures that the so-called scale function $f_\theta$ in \eqref{fthetatheorem} satisfies $f_\theta(z)=h_\theta(0+)+\int_0^\infty e^{-zv}h'_\theta(v)\dd v$, $z \in (0,\infty)$. In particular, $f_\theta$ is completely monotone. This phenomenon of  ``complete monotonicity at first passage'' was recently noted and explored for time-changed spectrally positive L\'evy processes in Vidmar \cite{vidmar-cm}. 
\end{remark}
Theorem~\ref{firstpassagetimestheorem} deals with first passage times below (accessible) positive levels. As for the first passage time to zero of $Z$ and the event of extinction we offer
\begin{theorem}\label{LTextinctiontheorem} Assume that the \CBC process does not explode. Let $z\in (z^*,\infty)$. The following equivalence holds true.
\begin{equation}\label{equiv} \mathbb{P}_z(\zeta_0^-<\infty)>0 \text{ if and only if }\Psi(\infty)=\infty\text{ and } \int^{\infty}\frac{\ddr u}{\Psi(u)}<\infty.
\end{equation}
Furthermore,  the Laplace transform of the extinction time of $Z$ satisfies: \begin{equation}\label{idLT}\mathbb{E}_z\big[e^{-\theta \zeta^-_0}\big]=\int_{0}^{\infty}ze^{-zx}\frac{h_\theta(x)}{h_\theta(\infty)}\ddr x=\mathbb{E}\big[ e^{-\theta \tau_\infty^{\mathbbm{e}_z}}\big],\quad \theta\in (0,\infty),\end{equation}
where $\tau_\infty^{\mathbbm{e}_z}$ denotes the explosion time of $V$ when the latter starts from an independent exponential random variable $\mathbbm{e}_z$ with rate $z$. \end{theorem}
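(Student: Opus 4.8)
The plan is to derive \eqref{idLT} by sending the level $\mathsf{a}$ down to $z^*$ in the first-passage identity \eqref{LTfirstpassagetime}, and then to extract \eqref{equiv} from the limiting expression. Assume first $z^*=0$ (the only case in which extinction is possible: if $z^*>0$ then $Z_t>z^*$ for all $t$ by Proposition~\ref{propositionstatespace}, so $\zeta_0^-=\infty$ $\PP_z$-a.s., while $z^*>0$ forces $\Psi(\infty)=-\infty$, so both sides of \eqref{equiv} fail and \eqref{idLT} reduces to $0=0$). Because $Z$ has no negative jumps, $Z_{\zeta_\mathsf{a}^-}=\mathsf{a}$ on $\{\zeta_\mathsf{a}^-<\infty\}$; the increasing limit $\zeta_{0+}^-:=\lim_{\mathsf{a}\downarrow0}\zeta_\mathsf{a}^-$ is announced by the $\zeta_\mathsf{a}^-$, so quasi-left-continuity of $Z$ yields $Z_{\zeta_{0+}^-}=\lim_{\mathsf{a}\downarrow0}\mathsf{a}=0$ on $\{\zeta_{0+}^-<\infty\}$, and since $0$ is absorbing this gives $\zeta_0^-=\zeta_{0+}^-$ a.s. Dominated convergence then turns \eqref{LTfirstpassagetime} into $\mathbb{E}_z[e^{-\theta\zeta_0^-}]=f_\theta(z)/f_\theta(0+)$. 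Feeding in the representation $f_\theta(z)=h_\theta(0+)+\int_0^\infty e^{-zv}h_\theta'(v)\ddr v$ from the remark above and applying monotone convergence (recall $h_\theta'\ge0$), I obtain $f_\theta(0+)=h_\theta(\infty)$, which is exactly the first equality in \eqref{idLT}.

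For the second equality I would invoke the classical fact from the Feller theory of one-dimensional diffusions that the increasing $\theta$-eigenfunction $h_\theta$ of $\mathscr{G}$ governs upward passage of $V$, namely $\mathbb{E}_y[e^{-\theta\tau_\infty}]=h_\theta(y)/h_\theta(\infty)$ with $\tau_\infty$ the explosion (upward-exit) time of $V$ (cf. the references cited after \eqref{generatorG}). Conditioning on the independent starting point $\mathbbm{e}_z$ and integrating against its density $ze^{-zy}$ then reproduces the middle member of \eqref{idLT}.

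It remains to establish \eqref{equiv}. As $f_\theta(z)\in(0,\infty)$, the identity just obtained shows $\PP_z(\zeta_0^-<\infty)>0$ iff $h_\theta(\infty)<\infty$, i.e. (by the diffusion interpretation) iff $\infty$ is accessible for $V$. The crux, and the step I expect to be the main obstacle, is to show that this accessibility is equivalent to $\Psi(\infty)=\infty$ together with $\int^\infty\ddr u/\Psi(u)<\infty$ --- strikingly a condition on the branching mechanism $\Psi$ alone. I would run Feller's test: with $G(x):=\int_{x_0}^x\Psi/\Sigma$ the scale and speed densities of $V$ are $S_V'(x)=\Sigma(x)^{-1}e^{-G(x)}$ and $m_V'(x)=e^{G(x)}$, and $\infty$ is accessible iff $\int^\infty m_V'(\eta)\,S_V(\eta,\infty)\,\ddr\eta<\infty$. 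The decisive estimate comes from integrating by parts via $\tfrac{\ddr}{\ddr u}e^{-G(u)}=-\tfrac{\Psi(u)}{\Sigma(u)}e^{-G(u)}$, giving $S_V(\eta,\infty)=\tfrac{e^{-G(\eta)}}{\Psi(\eta)}-\int_\eta^\infty\tfrac{\Psi'(u)}{\Psi(u)^2}e^{-G(u)}\,\ddr u$, so that $m_V'(\eta)S_V(\eta,\infty)\asymp1/\Psi(\eta)$ as $\eta\to\infty$. The delicate point is to control the correction integral for a general convex $\Psi$ (and to handle the subordinator and $\Psi=0$ regimes, where $\Psi(\infty)\le0$ makes the Feller integral diverge and hence $h_\theta(\infty)=\infty$). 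Granting this asymptotic equivalence, $\int^\infty m_V'(\eta)S_V(\eta,\infty)\,\ddr\eta<\infty\iff\int^\infty\ddr u/\Psi(u)<\infty$, with the proviso $\Psi>0$ near $\infty$ (i.e. $\Psi(\infty)=\infty$) automatically enforced, which is precisely \eqref{equiv}.
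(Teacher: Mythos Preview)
Your approach is essentially that of the paper. The derivation of \eqref{idLT} via quasi-left-continuity, $\zeta_{0+}^-=\zeta_0^-$, and $f_\theta(0+)=h_\theta(\infty)$ is identical, as is the reduction of \eqref{equiv} to Feller's test for accessibility of $\infty$ by $V$, namely to finiteness of $\mathcal I:=\int^\infty m_V'(\eta)S_V(\eta,\infty)\,\ddr\eta$.

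Where you stop short is the two-sided comparison $\mathcal I<\infty\iff\int^\infty\ddr u/\Psi(u)<\infty$, and this is indeed the substantive step. Your integration by parts already gives more than you claim: since $\Psi$ is nondecreasing on $(\rho,\infty)$ and $G(\infty)=\infty$, the correction term $e^{G(\eta)}\int_\eta^\infty\frac{\Psi'}{\Psi^2}e^{-G}$ is nonnegative, so $m_V'(\eta)S_V(\eta,\infty)\le 1/\Psi(\eta)$ and Grey's condition $\Rightarrow\mathcal I<\infty$ is immediate. The paper obtains the same bound by the equivalent observation $\Psi(u)e^{G(u)}S_V(u,\infty)\le 1$. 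For the converse, a bare asymptotic $\asymp 1/\Psi(\eta)$ is not what is proved (and would be awkward for general convex $\Psi$); instead the paper exploits the L\'evy--Khintchine growth bounds $\Psi(u)\ge \mathsf c u$, $\Sigma(u)\le \mathsf C u^2$ near infinity, hence $\Sigma/\Psi\le(\mathsf C/\mathsf c)u$, writes
\[
\int_{x_0}^x m_V' S_V(\cdot,\infty)=\int_{x_0}^x\frac{\ddr u}{\Psi(u)}+\int_{x_0}^x\bigl(e^{G}S_V(\cdot,\infty)\bigr)'\frac{\Sigma}{\Psi}\,\ddr u,
\]
and integrates the last term by parts against the linear weight $u$ to obtain $(1+\mathsf C/\mathsf c)\,\mathcal I\ge\int^\infty\ddr u/\Psi(u)-\text{const}$. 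The subordinator case ($\Psi(\infty)\le 0$) is dispatched separately by the crude lower bound $\mathcal I\ge\int^\infty(x-x_0)/\Sigma(x)\,\ddr x=\infty$, which also covers $z^*>0$ as you note.
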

\begin{remark} The integrability condition $\Psi(\infty)=\infty$, $\int^{\infty}\frac{\ddr u}{\Psi(u)}<\infty$ (called Grey's condition) is necessary and sufficient for the CB$(\Psi)$ process to become extinct with positive probability, see e.g. \cite[Theorem 3.1.3]{Li-book}. Collisions are therefore also never causing extinction in finite time of a non-explosive population. 
\end{remark}
\begin{remark}
Identity~\eqref{idLT} reveals that under the assumption of non-explosion of $Z$, the boundary $0$ is accessible for $Z$ if and only if $\infty$ is accessible for $V$. It was established via different arguments for logistic CBs and their extensions in \cite[Theorem 3.2]{foucart2021local}.
\end{remark}


\subsection{Stationary distribution}  As observed in Example~\ref{exampleattracting}\eqref{exampleattracting:3}, in the subordinator case, some phenomenon of recurrence can occur and a stationary regime may exist. Let $M_V$ be the speed measure of $V$ on $(0,\infty)$: for $\mathsf{a}<\mathsf{b}$ from $(0,\infty)$, \begin{equation}\label{speedmeasure}
M_V(\mathsf{a},\mathsf{b}]=\int_\mathsf{a}^{\mathsf{b}}e^{\int_{x_{0}}^x\frac{\Psi(u)}{\Sigma(u)} \ddr u}\ddr x\in (0,\infty),
\end{equation}
where, still, $x_0\in (0,\infty)$ is arbitrary but fixed.

\begin{theorem}[Stationary distribution and long-term behavior]\label{stationarydisttheorem}
 Assume that $S_V(0,x_0]=\infty$ and $S_V(x_0,\infty)=\infty$. Let $z\in (0,\infty)$. Then  the minimal CBC process converges in law towards a non-degenerate random variable $Z_\infty$ on $(z^*,\infty)$ if and only if  $M_V(0,\infty)<\infty$. Moreover, the Laplace transform of the latter is then given by \begin{equation}\label{LTstationary}\mathbb{E}_z[e^{-xZ_\infty}]=\frac{M_V(x,\infty)}{M_V(0,\infty)},\quad  x\in[ 0,\infty).\end{equation}
The case $M_V(0,\infty)=\infty$ covers three different possibilities:
\begin{enumerate}[(i)]
\item If $M_V(0,x_0]<\infty$ and $M_V(x_0,\infty)=\infty$, then $Z_t\underset{t\rightarrow \infty}{\longrightarrow} 0$ in probability.
\item If $M_V(0,x_0]=\infty$ and $M_V(x_0,\infty)<\infty$, then $Z_t\underset{t\rightarrow \infty}{\longrightarrow} \infty$ in probability.
\item If $M_V(0,x_0]=\infty$ and $M_V(x_0,\infty)=\infty$, then 
$Z$ has no limiting distribution. 
\end{enumerate}

\end{theorem}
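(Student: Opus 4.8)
The plan is to transfer the whole question to the Laplace-dual diffusion $U$ and to read off the long-term behaviour of $Z$ from the boundary classification of $U$. Since $S_V(0,x_0]=\infty$, Proposition~\ref{propositionsuffcond} guarantees that $Z$ does not explode, so Proposition~\ref{Laplacedualsemigroup} applies and gives $\mathbb{E}_z[e^{-xZ_t}]=\mathbb{E}_x[e^{-zU_t}]$ for all $\{t,x,z\}\subset[0,\infty)$, with $U$ the diffusion with generator $\mathscr{A}$ of \eqref{generatorA}. Solving $\mathscr{A}s=0$ shows that $U$ has scale function $S_U(x)=\int_{x_0}^x e^{\int_{x_0}^u \Psi(v)/\Sigma(v)\,\ddr v}\,\ddr u$, which is exactly $M_V(x_0,x]$ from \eqref{speedmeasure}; thus $S_U=M_V$ and, symmetrically, the speed measure of $U$ equals $S_V$. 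In particular the boundary $0$ (resp.\ $\infty$) of $U$ is attracting if and only if $S_U(0,x_0]=M_V(0,x_0]<\infty$ (resp.\ $S_U(x_0,\infty)=M_V(x_0,\infty)<\infty$). The standing hypothesis $S_V(0,x_0]=S_V(x_0,\infty)=\infty$ means that $U$ has infinite speed measure at both ends, and all the relevant information about the limit of $Z$ is encoded in the pair $(M_V(0,x_0],M_V(x_0,\infty))$.

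Assume first $M_V(0,\infty)<\infty$, i.e.\ both boundaries of $U$ are attracting. Then the $S_U$-martingale $S_U(U_t)$ (stopped before the boundaries) is bounded and convergent, so $U_t\to U_\infty\in\{0,\infty\}$ $\mathbb{P}_x$-a.s., and optional stopping at the exit times of $(\epsilon,1/\epsilon)$ gives $\mathbb{P}_x(U_\infty=0)=\frac{S_U(\infty)-S_U(x)}{S_U(\infty)-S_U(0)}=\frac{M_V(x,\infty)}{M_V(0,\infty)}$. Since $0\le e^{-zU_t}\le 1$ and $e^{-zU_t}\to\mathbbm{1}_{\{U_\infty=0\}}$, bounded convergence yields $\mathbb{E}_z[e^{-xZ_t}]=\mathbb{E}_x[e^{-zU_t}]\to \frac{M_V(x,\infty)}{M_V(0,\infty)}=:\phi(x)$ for every $x>0$. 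As $\phi(0+)=1$ by monotone convergence, the continuity theorem for Laplace transforms produces a probability law $\mu$ on $[0,\infty)$ with transform $\phi$ such that $Z_t\Rightarrow\mu$; writing $\mu=\mathrm{Law}(Z_\infty)$, this is precisely \eqref{LTstationary}. Finally $\phi(x)<1$ for $x>0$ rules out $\mu=\delta_0$, while $\phi(\infty)=0$ shows $\mu(\{0\})=0$; together with Proposition~\ref{propositionstatespace} (which forces $Z_t\ge z^*$, hence $Z_\infty\ge z^*$) this places $Z_\infty$ on $(z^*,\infty)$ and establishes the ``if'' direction, including non-degeneracy.

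Suppose next $M_V(0,\infty)=\infty$. In sub-case~(i), where $M_V(0,x_0]<\infty$ and $M_V(x_0,\infty)=\infty$, only $0$ is attracting for $U$, so $U$ is transient towards $0$ and $U_t\to 0$ $\mathbb{P}_x$-a.s.; bounded convergence gives $\mathbb{E}_z[e^{-xZ_t}]\to 1$ for every $x$, i.e.\ $Z_t\to 0$ in probability. Sub-case~(ii) is symmetric: only $\infty$ is attracting, $U_t\to\infty$ a.s., $\mathbb{E}_z[e^{-xZ_t}]\to 0$ for every $x>0$, whence $Z_t\to\infty$ in probability. In both cases the limit of $Z$ is degenerate at a boundary, which, together with the previous paragraph, also yields the ``only if'' direction of the dichotomy, since neither outcome is a non-degenerate law on $(z^*,\infty)$.

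There remains sub-case~(iii), $M_V(0,x_0]=M_V(x_0,\infty)=\infty$, where neither boundary of $U$ is attracting and $U$ is (null) recurrent on $(0,\infty)$; this is where I expect the real difficulty. Here $U_t$ converges to neither $0$ nor $\infty$ a.s., so the easy degenerate limits of sub-cases~(i)--(ii) are excluded, and a genuine non-degenerate weak limit for $Z$ is excluded as in the main case (its transform would have to be the ill-defined $M_V(x,\infty)/M_V(0,\infty)$, forcing $\mathbb{E}[e^{-xZ_\infty}]=0$ for all $x>0$ and hence an improper limit). The crux is therefore to rule out convergence of $Z_t$ to an improper mixture $p\,\delta_0+(1-p)\,\delta_\infty$; equivalently, to show that the mass split $\mathbb{P}_x(U_t\ \text{near}\ 0)$ fails to converge as $t\to\infty$. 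I would derive this from the recurrence of $U$ (equivalently, the null recurrence of the Siegmund dual $V$, through $\mathbb{P}_x(U_t<y)=\mathbb{P}_y(V_t>x)$): since $U$ crosses any two levels $0<a<b<\infty$ downwards and upwards infinitely often, one should extract time sequences along which the mass sits predominantly near $0$ and others along which it sits predominantly near $\infty$, forcing $\limsup_t\mathbb{P}_x(U_t<y)$ and $\liminf_t\mathbb{P}_x(U_t<y)$ to differ, so that $t\mapsto\mathbb{E}_x[e^{-zU_t}]$ does not converge; transported through the duality, this says exactly that $Z$ has no limiting distribution. Making the oscillation of the split quantitative — controlling the excursions of the two-sided-infinite-speed recurrent diffusion so that the mass near each boundary genuinely fails to stabilise — is the main obstacle, in sharp contrast to the positive-recurrent and one-sided-attracting regimes above, which follow cleanly from the scale-function dichotomy.
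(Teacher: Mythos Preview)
Your treatment of the main case $M_V(0,\infty)<\infty$ and of sub-cases~(i)--(ii) is essentially the paper's: both of you pass through the Laplace duality $\mathbb{E}_z[e^{-xZ_t}]=\mathbb{E}_x[e^{-zU_t}]$, identify $S_U=M_V$ and $M_U=S_V$, and read off the behaviour of $U$. The paper derives \eqref{LTstationary} via the Siegmund duality with the positive-recurrent diffusion $V$ (its speed measure $M_V$ being the normalised limiting law of $V$), whereas you get the same formula directly from the exit probabilities of the transient $U$; both routes land on $M_V(x,\infty)/M_V(0,\infty)$ and are equally short.

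The genuine gap is in sub-case~(iii), and you are making it much harder than it is. You already note that $M_U=S_V$; the standing hypothesis $S_V(0,x_0]=S_V(x_0,\infty)=\infty$ therefore says exactly that $M_U$ is infinite at \emph{both} endpoints. Combined with $S_U(0,x_0]=S_U(x_0,\infty)=\infty$ (your sub-case~(iii) assumption), this makes $U$ a \emph{null recurrent} diffusion on $(0,\infty)$, and it is a standard fact that such a process has no limiting distribution on the compactification $[0,\infty]$: the only invariant measure is the (infinite) speed measure, and recurrence rules out escape to either boundary. Now transport this through the duality. If $Z_t$ under $\PP_z$ had any weak limit on $[0,\infty]$, then $\mathbb{E}_x[e^{-zU_t}]=\mathbb{E}_z[e^{-xZ_t}]$ would converge for every $x>0$, which (varying the exponent and using continuity of Laplace transforms on $[0,\infty]$) produces a limiting distribution for $U$ on $[0,\infty]$, a contradiction. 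That is the whole argument; the quantitative oscillation/excursion control you sketch is unnecessary, and your attempted reduction via ``the ill-defined $M_V(x,\infty)/M_V(0,\infty)$'' is not a proof, since it presupposes the form of the limit rather than excluding its existence.
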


\begin{remark} Plainly, if $-\Psi$ is not the Laplace exponent of a subordinator, then $M_V(x_0,\infty)=\infty$. Here are two simple conditions ensuring, between them, that $M_V(0,\infty)<\infty$ and hence that a limiting distribution exists. If $\Sigma'(0+)=c/2>0$, then $M_V(0,x_0]<\infty$ (without further assumptions on $\Psi$). 
If $-\Psi$ is the Laplace exponent of a subordinator with drift $\mathrm{d}$, i.e. $-\Psi(x)/x \underset{x\rightarrow \infty}{\longrightarrow} \mathrm{d}>0$, such that $\frac{2\mathrm{d}}{a^2}>1$ (with $a\geq 0$ the diffusive coefficient in \eqref{collisionmechanism} and by convention $1/0=\infty$), then $M_V(x_0,\infty)<\infty$.
\end{remark}
\begin{remark} One verifies easily from \eqref{LTstationary} that the limiting distribution of the \CBC admits a first moment if and only if $\int_0^{x_0}\frac{-\Psi(u)}{\Sigma(u)}\ddr u<\infty$.
\end{remark}
\begin{example} \label{example:legion} \leavevmode
\begin{enumerate}
\item\label{example:legion-verhulst} Consider the \CBC process with collisions and branching mechanisms satisfying,  for  $x\in[ 0,\infty)$, $\Sigma(x)=\frac{a^2}{2}x^2+\frac{c}{2}x$ with $a,c\in (0,\infty)$ and $\Psi(x)=-\mu x$ with $\mu\in \mathbb{R}$. In other words, $(Z_t,t\geq 0)$ satisfies the SDE, called stochastic Verhulst equation
\[\ddr Z_t=aZ_t\ddr W_t+\big(\mu Z_t-\frac{c}{2}Z_t^{2}\big)\ddr t,\quad  Z_0=z.\]
See Giet et al. \cite{zbMATH06630288} for a deep study of this diffusion (including its first passage times). The \CBC process $Z$ admits a limiting distribution if and only if  $\mu>\frac{a^2}{2}$. When it exists, the latter has for its Laplace transform \[\mathbb{E}[e^{-xZ_\infty}]=\left(\frac{a^2}{c}x+1\right)^{-(\frac{2\mu}{a^2}-1)},\quad x\in [0,\infty),\]
which is the Laplace transform of a gamma distribution with density \[(0,\infty)\ni u\mapsto \frac{\beta^{\alpha}}{\Gamma(\alpha)}u^{\alpha-1}e^{-\beta u},\] 
its parameters being $\alpha:=\frac{2\mu}{a^2}-1$ and $\beta:=\frac{c}{a^2}$.
\item Assume that, for $x\in [0,\infty)$, $\Sigma(x)=dx^{\alpha}$ with $\alpha\in (1,2)$ and $\Psi(x)=-d'x^{\beta}$ with $\beta >\alpha-1$ and $d,d'\in (0,\infty)$. Then the \CBC process $Z$ admits a limiting distribution with Laplace transform:
\[\mathbb{E}[e^{-xZ_\infty}]=\frac{\int_x^{\infty}e^{-\frac{d'}{d}u^{\beta-\alpha+1}}\ddr u}{\int_0^{\infty}e^{-\frac{d'}{d}u^{\beta-\alpha+1}}\ddr u}=\frac{\Gamma\left(\frac{1}{\beta-\alpha+1},\frac{d'}{d}x^{\beta-\alpha+1}\right)}{\Gamma\left(\frac{1}{\beta-\alpha+1}\right)}, \quad  x\in [0,\infty),\]
where $\Gamma(s,x):=\int_{x}^{\infty}u^{s-1}e^{-u}\ddr u$ (for $s>0$ and $x\geq 0$) is the incomplete Gamma function.
\item Assume that, for $x\in [0,\infty)$, $\Sigma(x)=dx^{\alpha}$ with $\alpha\in (1,2)$, $d\in (0,\infty)$ and $\Psi(x)=-dx^{\alpha-1}$. Then if $d'/d<1$, $M_V(x_0,\infty)=\infty$ and $M_V(0,x_0]<\infty$, thus $Z$ tends to $0$ in probability. If $d'/d>1$, $M_V(x_0,\infty)<\infty$ and $M_V(0,x_0]=\infty$, and again $Z$ tends to $\infty$ in probability. In the case $d'/d=1$, $Z$ has no limiting distribution. 
\end{enumerate}
\end{example}

\subsection{The role of Laplace and Siegmund dualities}The second order differential operator $\mathscr{G}$, defined in \eqref{generatorG}, will first appear as an analytical trick  in the quest for an eigenfunction of $\mathscr{L}$, see the proof of Theorem \ref{firstpassagetimestheorem}, especially the forthcoming Lemma \ref{proposition:h-to-f}. The link between the generators $\mathscr{G}$, $\mathscr{A}$ and $\mathscr{L}$ hinges in fact on two duality relationships, known as \textit{Laplace duality} and \textit{Siegmund duality}. We explore now these dualities, which will for instance allow us to represent, under certain conditions, the semigroup of the process $Z$ with that of $U$ and in turn $V$.

From \eqref{generatorA} and \eqref{genZ} one checks  by direct computation the  key identity
\begin{equation}\label{dualgen}
\mc{L}_ze^{-xz}=\Sigma(x)z^2e^{-xz}+\Psi(x)ze^{-xz}=\mc{A}_xe^{-xz},\quad \{x,z\}\subset [0,\infty).
\end{equation}
We say that Laplace duality \eqref{dualgen} holds  \textit{at the level of the generators}. Under the assumption of non-explosion of $Z$ we have moreover the following duality relationship  \textit{at the level of the semigroups}.
\begin{proposition}\label{Laplacedualsemigroup} Let $Z$ be the  \CBC and $U$ the diffusion with generator $\mathscr{A}$ and $0$ an absorbing state. Assume that $Z$ does not explode. Then 
\begin{equation}\label{dualsemigroup}
\mathbb{E}_z[e^{-xZ_t}]=\mathbb{E}_x[e^{-zU_t}],\quad \{t,x,z\}\subset [ 0,\infty).
\end{equation}
\end{proposition}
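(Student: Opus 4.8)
The plan is to upgrade the generator-level Laplace duality \eqref{dualgen} to the semigroup-level statement \eqref{dualsemigroup} by a standard duality-from-generators argument, being careful to justify all the analytic ingredients under the non-explosion hypothesis. The backbone is the classical observation that if $Z$ and $U$ have transition semigroups $(P_t)$ and $(Q_t)$ and if one fixes $t>0$ and considers, for a suitable test of the duality function $H(z,x):=e^{-xz}$, the map $(s,z,x)\mapsto P_s H(\cdot,x)(z)$ versus $Q_{t-s}H(z,\cdot)(x)$, then under enough regularity the function $s\mapsto \mathbb{E}_z[e^{-xZ_s}]\,\text{``through''}\,\mathbb{E}_x[\cdots]$ is constant in $s$. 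Concretely, I would fix $\{t,x,z\}\subset(0,\infty)$ and attempt to show that
\[
s\mapsto \mathbb{E}_z\big[\phi_{t-s}(Z_s)\big],\quad \phi_r(z'):=\mathbb{E}_x[e^{-z'U_r}],\quad s\in[0,t],
\]
is constant, so that evaluating at $s=0$ and $s=t$ yields $\mathbb{E}_x[e^{-zU_t}]=\mathbb{E}_z[e^{-xZ_t}]$. Differentiating in $s$ formally produces $-\mathscr{A}_x\phi_{t-s}(Z_s)+\mathscr{L}_z\phi_{t-s}(Z_s)$, which vanishes pointwise by \eqref{dualgen} once one knows that $\phi_r(\cdot)=\mathbb{E}_x[e^{-\cdot\,U_r}]$ lies (as a function of its $Z$-argument $z'$) in a class on which $\mathscr{L}$ acts and that $\partial_r\phi_r=\mathscr{A}_x\phi_r$.

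First I would record the needed facts about $U$: since $U$ is a one-dimensional diffusion on $[0,\infty)$ with generator $\mathscr{A}=\Sigma g''-\Psi g'$ and $0$ absorbing, I would check (as the excerpt anticipates, via the Siegmund-dual diffusion $V$ and the non-explosion of $Z$) that $U$ itself does not explode, so $(Q_t)$ is an honest (conservative on $[0,\infty)$, $\infty$ inaccessible) Feller semigroup, and that $x\mapsto \phi_r(z')=\mathbb{E}_x[e^{-z'U_r}]$ is smooth enough that $\partial_r Q_r e^{-z'\cdot}=\mathscr{A}Q_r e^{-z'\cdot}=Q_r\mathscr{A}e^{-z'\cdot}$ holds, i.e. $e^{-z'\cdot}$ is in the domain of $\mathscr{A}$ and the Kolmogorov equations apply. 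The map $z'\mapsto\phi_r(z')=\mathbb{E}_x[e^{-z'U_r}]$ is, being a Laplace transform of a subprobability law on $[0,\infty)$, completely monotone, hence bounded and $C^2_b$ on $[0,\infty)$; this is exactly the regularity that licenses applying $\mathscr{L}$ to it (recall $\mathscr{L}$ accepts $C^2_b$ inputs). For the $Z$-side I would invoke the martingale characterisation \eqref{cbc-construction:i:a} from Theorem~\ref{theorem:cbc-sde-contruction}: for $g\in C^2_b([0,\infty))$ the process $g(Z_{s\wedge\zeta_n^+})-\int_0^{s\wedge\zeta_n^+}\mathscr{L}g(Z_u)\,\dd u$ is a local martingale, and under non-explosion $\zeta_n^+\uparrow\infty$ so, after a localisation/uniform-integrability argument using the boundedness of $g=\phi_r$ and of $\mathscr{L}\phi_r$ on the relevant range, one obtains Dynkin's formula $\mathbb{E}_z[g(Z_s)]-g(z)=\mathbb{E}_z\big[\int_0^s\mathscr{L}g(Z_u)\,\dd u\big]$.

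The heart is then the interchange that makes $s\mapsto \mathbb{E}_z[\phi_{t-s}(Z_s)]$ differentiable with the expected derivative. I would write, for $0\le s<s'\le t$,
\[
\mathbb{E}_z[\phi_{t-s'}(Z_{s'})]-\mathbb{E}_z[\phi_{t-s}(Z_{s})]
=\mathbb{E}_z\big[\phi_{t-s'}(Z_{s'})-\phi_{t-s'}(Z_{s})\big]
+\mathbb{E}_z\big[\phi_{t-s'}(Z_{s})-\phi_{t-s}(Z_{s})\big],
\]
handle the first bracket by Dynkin's formula applied to $g=\phi_{t-s'}$, the second by the backward equation $\partial_r\phi_r=\mathscr{A}\phi_r$ (as functions of the $U$-argument $x$, evaluated via \eqref{dualgen} as an operator in the $z$-argument), and then combine, using \eqref{dualgen} to cancel $\mathscr{L}_z\phi$ against $\mathscr{A}_x\phi$ at each fixed point $Z_s$. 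A clean way to organise this is to consider the two-parameter family $\Phi(s)=\mathbb{E}_z[\phi_{t-s}(Z_s)]$ and show it is absolutely continuous with $\Phi'(s)=\mathbb{E}_z\big[(\mathscr{L}_z-\mathscr{A}_x)\,e^{-\cdot\,Z_s}\big]=0$; alternatively, and perhaps more robustly, one can pass through the resolvents, showing $\int_0^\infty e^{-\lambda t}\mathbb{E}_z[e^{-xZ_t}]\,\dd t=\int_0^\infty e^{-\lambda t}\mathbb{E}_x[e^{-zU_t}]\,\dd t$ for all $\lambda>0$ by identifying both sides as the unique bounded solution of the same resolvent equation built from \eqref{dualgen}, and then invert by uniqueness of Laplace transforms in $t$.

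The main obstacle I anticipate is entirely analytic rather than conceptual: justifying the differentiation-under-the-expectation and the cancellation uniformly enough to run the argument, since $\mathscr{L}$ and $\mathscr{A}$ involve the unbounded coefficients $z^2\Sigma$, $z\Psi$ and $\Sigma,\Psi$ which grow at $\infty$, and $Z$ a priori explores large values (it is only assumed non-explosive, not bounded). Controlling $\mathscr{L}\phi_r(Z_s)=Z_s^2\mathrm{L}^\Sigma\phi_r(Z_s)+Z_s\mathrm{L}^\Psi\phi_r(Z_s)$ and the analogous $\mathscr{A}$-term, and exchanging limits in the localisation $\zeta_n^+\uparrow\infty$, is where the non-explosion hypothesis must be used decisively, presumably together with the complete monotonicity of $\phi_r$ (which makes $e^{-z'\,\cdot}\mapsto \mathscr{L}$-image integrable against the subprobability law defining $\phi_r$) and a dominated-convergence or monotone-class argument. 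I would expect to need to first establish \eqref{dualsemigroup} for the pair restricted below an upper level (using $\zeta_n^+$) and then let $n\to\infty$, invoking $\zeta_\infty=\infty$ a.s. from non-explosion together with monotone/dominated convergence and the fact that $0\le e^{-xZ_{t}}\le 1$, and symmetrically that $U$ does not explode so $e^{-zU_t}\in[0,1]$; the boundedness of the duality function by $1$ is what ultimately tames the growth of the coefficients in the limit.
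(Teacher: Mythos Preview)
Your plan is the right one in spirit: fix $t$ and show that a suitable interpolation between $\mathbb{E}_z[e^{-xZ_t}]$ and $\mathbb{E}_x[e^{-zU_t}]$ is constant, using the generator duality \eqref{dualgen} and the non-explosion of both processes. The paper executes exactly this strategy, but packaged through Ethier--Kurtz \cite[Corollary~4.4.15]{ethier}, and with one crucial technical difference in the localization.

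You propose to localize $Z$ via $\zeta_n^+$. This is the less convenient choice. The obstruction is that
\[
g(z',u):=\big(z'^2\Sigma(u)+z'\Psi(u)\big)e^{-z'u}
\]
is bounded in $z'$ for each fixed $u>0$, but not uniformly: if $\Psi$ has a diffusive part ($\sigma>0$) then $\sup_u z'\Psi(u)e^{-z'u}\asymp 1/z'$ blows up as $z'\downarrow 0$, so even after capping $Z$ above, $\mathscr{L}\phi_r(Z_s)$ need not be integrable when $Z_s$ approaches zero without hitting it. You would have to localize $Z$ from below as well, and then manage two boundary terms. By contrast, restricting $u\in[\mathsf{a},\mathsf{b}]\subset(0,\infty)$ makes $\Sigma,\Psi$ bounded and forces $u\geq \mathsf{a}>0$, so $z'^k e^{-z'u}$ is bounded in $z'$; hence $g$ is globally bounded on $[0,\infty)\times[\mathsf{a},\mathsf{b}]$. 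This is precisely why the paper localizes $U$ (via $\sigma_{\mathsf{a},\mathsf{b}}:=\sigma_{\mathsf{a}}^-\wedge\sigma_{\mathsf{b}}^+$) rather than $Z$.

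Concretely, the paper first proves as a separate lemma that $U$ does not explode (its boundary $\infty$ is inaccessible; a Feller test computation). It then runs the independent-copies duality with $\tau=\infty$ for $Z$ and $\sigma=\sigma_{\mathsf{a},\mathsf{b}}$ for $U$, obtaining
\[
\mathbb{E}_z[e^{-xZ_t}]-\mathbb{E}_x[e^{-zU_{t\wedge\sigma_{\mathsf{a},\mathsf{b}}}}]
=\mathbb{E}\!\left[e^{-\mathsf{a}Z_{t-\sigma_{\mathsf{a}}^-}}-e^{-\mathsf{a}z};\,\sigma_{\mathsf{a}}^-\leq\sigma_{\mathsf{b}}^+\wedge t\right]
+\mathbb{E}\!\left[e^{-\mathsf{b}Z_{t-\sigma_{\mathsf{b}}^+}}-e^{-\mathsf{b}z};\,\sigma_{\mathsf{b}}^+\leq\sigma_{\mathsf{a}}^-\wedge t\right],
\]
the right-hand side arising from the boundary contribution in the Ethier--Kurtz identity (the inner integral reduces, via the martingale property of $s\mapsto e^{-yZ_s}-\int_0^s g(Z_r,y)\,\dd r$ for fixed $y$, to increments of $e^{-yZ_\cdot}$ at $y=\mathsf{a},\mathsf{b}$). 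Sending $\mathsf{b}\to\infty$ kills the second bracket because $e^{-\mathsf{b}\cdot}\to 0$ and $Z$ does not explode; sending $\mathsf{a}\downarrow 0$ kills the first because $\sigma_{\mathsf{a}}^-\to\sigma_0^-$ and $U$ is absorbed at $0$, while $e^{-\mathsf{a}\cdot}\to 1$. Non-explosion of $U$ is what ensures $\sigma_{\mathsf{b}}^+\to\infty$ so the left-hand side converges to the desired difference.

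So: your outline is correct and is essentially what the paper does; the one concrete adjustment is to localize $U$ in a compact window $[\mathsf{a},\mathsf{b}]$ rather than $Z$ below a level, which is what makes the unbounded coefficients harmless and the passage to the limit clean.
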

\begin{remark} \label{remark:feller}
Proposition \ref{Laplacedualsemigroup} requires the non-explosiveness of the process $Z$; this assumption will play an important role in the proof. On the other hand, the diffusion $U$ is automatically non-explosive, as we shall prove in due course (Lemma~\ref{lemma:U-not-explocive}). The duality allows one to represent the semigroup of $Z$ with the help of that of $U$. In particular, one can check from \eqref{dualsemigroup} that under non-explosion the semigroup of the CBC process $Z$ is Feller, see \cite[Lemma 6.3]{MR3940763}. 
\end{remark}
Under extra conditions, which guarantee  that $V$ has no attracting boundaries, the diffusion $U$ in turn is in Siegmund duality with the diffusion $V$, in the following precise sense.

\begin{proposition}\label{Siegmundualsemigroup} Assume that $S_V(0,x_0]=\infty$, $S_V(x_0,\infty)=\infty$, and recall $U$ is the  diffusion with generator $\mathscr{A}$.  For all $x,y\in (0,\infty)$, 
\begin{equation}\label{Siegmunddual} \mathbb{P}_x(U_t<y)=\mathbb{P}_y(V_t>x),
\end{equation}
where $V$ is the diffusion with generator $\mathscr{G}$. Moreover, for any $z\in (0,\infty)$, one has
$$\mathbb{E}_z[e^{-xZ_t}]=\int_{0}^{\infty}ze^{-zy}\mathbb{P}_y(V_t>x)\ddr y,\quad x\in [0,\infty).$$
\end{proposition}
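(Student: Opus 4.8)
The plan is to treat the two assertions in turn, the first being the genuine work and the second a short consequence. For the Siegmund relation \eqref{Siegmunddual} I would first realise $U$ and $V$ as the Siegmund-dual pair that one-dimensional diffusion theory predicts from the analytic swap of scale and speed. Reading off scale and speed from $\mathscr{A}=\Sigma\partial^2-\Psi\partial$ in \eqref{generatorA}, the scale density of $U$ is $S_U'(x)=\exp\big(\int_{x_0}^x \tfrac{\Psi(u)}{\Sigma(u)}\,\mathrm{d}u\big)$ and its speed density is $\tfrac{1}{\Sigma(x)}\exp\big(-\int_{x_0}^x \tfrac{\Psi(u)}{\Sigma(u)}\,\mathrm{d}u\big)$. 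Comparing with \eqref{scalemeasure} and \eqref{speedmeasure} one sees at once that the scale measure of $U$ equals the speed measure $M_V$ of $V$, while the speed measure of $U$ equals the scale measure $S_V$ of $V$. This interchange of scale and speed is precisely the analytic fingerprint of Siegmund duality for regular diffusions (Siegmund \cite{MR0431386}, Cox and R\"osler \cite{MR724061}).

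At the level of generators the duality is the identity, understood distributionally with duality function $H(x,y):=\mathbbm{1}_{\{x<y\}}$,
\[
\mathscr{A}_x H(x,y)=\mathscr{G}_y H(x,y),
\]
which is verified by a one-line computation: as a distribution in $x$ one has $\partial_x H=-\delta_y$ and $\partial_x^2 H=-\delta_y'$, so that integrating $\Sigma(x)\partial_x^2H-\Psi(x)\partial_xH$ against a test function reproduces exactly the result of integrating $\Sigma(y)\partial_y^2H+(\Sigma'(y)+\Psi(y))\partial_yH$, since $\partial_yH=\delta(\cdot-x)$. To promote this to the semigroup identity \eqref{Siegmunddual} I would invoke the general Siegmund duality theorem for diffusions (\cite{MR0431386,MR724061}) together with the observation that, under the standing hypotheses $S_V(0,x_0]=\infty$ and $S_V(x_0,\infty)=\infty$, both boundaries of $V$ are non-attracting, hence inaccessible (natural or entrance), so that $V_t$ places no mass on $\{0,\infty\}$ and, dually, $U$ requires no boundary prescription --- recall (Remark~\ref{Remark0notexit}) that $0$ is then never regular for $U$, and that $U$ does not explode (Lemma~\ref{lemma:U-not-explocive}). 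This is what makes the formal duality exact, with no compensating boundary terms; the careful boundary bookkeeping I would carry out along the lines of \cite[Section 6]{foucart2021local}.

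I expect this boundary analysis to be the main obstacle. Concretely, one must check that $y\mapsto \mathbb{P}_y(V_t>x)$ has the limiting behaviour dictated by $U$, namely $\mathbb{P}_y(V_t>x)\to 0$ as $y\downarrow 0$ (because $0$ is non-attracting for $V$, so a diffusion started near $0$ cannot quickly climb above $x$) and $\mathbb{P}_y(V_t>x)\to 1$ as $y\uparrow\infty$ (because $\infty$ is non-attracting, so a diffusion started high cannot quickly descend below $x$); these match $\mathbb{P}_x(U_t<0)=0$ and $\mathbb{P}_x(U_t<\infty)=1$ (the latter using non-explosion of $U$), and guarantee that the two sides of \eqref{Siegmunddual} agree at the endpoints. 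The passage from non-strict to strict inequalities is harmless, the boundaries of $V$ being inaccessible so that $V_t$ is atomless on $(0,\infty)$.

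Granting \eqref{Siegmunddual}, the integral representation is immediate. Starting from the Laplace duality \eqref{dualsemigroup} of Proposition~\ref{Laplacedualsemigroup}, $\mathbb{E}_z[e^{-xZ_t}]=\mathbb{E}_x[e^{-zU_t}]$, I would write $e^{-zU_t}=\int_0^\infty z e^{-zy}\mathbbm{1}_{\{U_t<y\}}\,\mathrm{d}y$ (valid for $U_t\in[0,\infty)$, including the absorbing value $0$), take expectations under $\mathbb{P}_x$, and apply Tonelli to obtain $\mathbb{E}_x[e^{-zU_t}]=\int_0^\infty z e^{-zy}\,\mathbb{P}_x(U_t<y)\,\mathrm{d}y$. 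Substituting $\mathbb{P}_x(U_t<y)=\mathbb{P}_y(V_t>x)$ from \eqref{Siegmunddual} then yields $\mathbb{E}_z[e^{-xZ_t}]=\int_0^\infty z e^{-zy}\,\mathbb{P}_y(V_t>x)\,\mathrm{d}y$, as claimed, the boundary value $x=0$ being covered by $\mathbb{P}_y(V_t>0)=1$.
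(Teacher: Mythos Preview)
Your approach is essentially the same as the paper's: verify Siegmund duality by appealing to the general theory (scale/speed interchange, \cite{MR0431386,MR724061}, \cite[Section~6]{foucart2021local}) after noting that the hypotheses make the boundaries of $V$ inaccessible, then combine with Laplace duality to get the integral formula. The paper is terser --- it simply cites \cite[Theorem~6.1]{foucart2021local} for the first part and phrases the second via an independent exponential $\mathbbm{e}_z$ with $\mathbb{E}_x[e^{-zU_t}]=\mathbb{P}_x(\mathbbm{e}_z>U_t)$ --- but your more explicit unpacking of scale/speed and boundary matching is correct and in the same spirit.

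One small omission: when you invoke Proposition~\ref{Laplacedualsemigroup} for $\mathbb{E}_z[e^{-xZ_t}]=\mathbb{E}_x[e^{-zU_t}]$, you need $Z$ non-explosive, which you do not justify. The paper notes that $S_V(0,x_0]=\infty$ ensures this (via Theorem~\ref{attractiveboundaries}\eqref{attractiveboundaries:iii} or Proposition~\ref{propositionsuffcond}); you should add that one-line observation before appealing to the Laplace duality.
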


The final substantial result on which we report here establishes that, in a sense that shall be be made precise presently, Laplace duality with a diffusion at the level of the generators \eqref{dualgen} actually characterizes CBCs. In order to formulate this with ease we suspend temporarily all meaning attached hitherto to $Z$, $\mathscr{L}$, $\mathscr{A}$, $\Sigma$ and $\Psi$ (and indeed just all the notation introduced thusfar). 
\begin{theorem}\label{thm:laplace-converse}
Let $\mathscr{L}$ be the infinitesimal generator of a positive (possibly explosive) Feller process $(Z_t,t\geq 0)$ without negative jumps and $0$ an absorbing state, whose domain includes\footnote{Rapidly decaying functions are those $f\in C^{\infty}([0,\infty))$ (admitting a $C^\infty$ extension to a neighborhood of $[0,\infty)$) such that $\underset{z\rightarrow \infty}{\lim} P(z)f^{(k)}(z)=0$ for any polynomial $P$ and any $k\in \mathbb{N}_0$.}  
\begin{align*}
\mathcal{S}:=\{f\in \mathbb{R}^{[0,\infty)}:&\,(\text{the limit } f(\infty):=\lim_{u\to\infty} f(u)\text{ exists in  }\mathbb{R})\text{ and }\\
&\,(f-f(\infty)\in \text{ Schwartz space of rapidly decaying functions})\},
\end{align*}
mapping $\mathcal{S}$ into $C_0([0,\infty))$. Suppose  further $\mathscr{L}$ is in Laplace duality with the conservative generator of a diffusion process on $[0,\infty)$, more precisely, suppose that
 \begin{equation}\label{eq.duality-converse}
\mc{L}_ze^{-xz}=\Sigma(x)z^2e^{-xz}+\Psi(x)ze^{-xz}=:\mathscr{A}_xe^{-xz},\quad \{x,z\}\subset [0,\infty),
\end{equation}
 holds true for some $\Sigma:[0,\infty)\to [0,\infty)$, not zero, and some $\Psi:[0,\infty)\to \mathbb{R}$, both continuous at zero. Then $\Psi$ and $\Sigma$ are  L\'evy-Khintchine functions of the spectrally positive type as in \eqref{branchingmechanism}-\eqref{collisionmechanism} and $\mathscr{L}$ acts  on $C_ c^\infty([0,\infty))$ according to  \eqref{genZ}.
\end{theorem}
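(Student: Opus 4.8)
The plan is to recover the local structure of $\mathscr{L}$ from the Feller property, to evaluate it on exponentials, and then to read off $\Sigma$ and $\Psi$ from the duality identity \eqref{eq.duality-converse}. Since $(Z_t,t\ge0)$ is Feller, its generator $\mathscr{L}$ obeys the positive maximum principle, and as $C_c^\infty([0,\infty))\subseteq\mathcal{S}$ lies in its domain and is mapped into $C_0([0,\infty))$, a Courrège-type representation applies: on $C_c^\infty([0,\infty))$ one has
\[
\mathscr{L}f(z)=a(z)f''(z)+b(z)f'(z)-c(z)f(z)+\int\big(f(z+h)-f(z)-hf'(z)\mathbbm{1}_{(0,1]}(h)\big)\nu(z,\ddr h),
\]
with $a(z)\ge0$, $c(z)\ge0$ and $\nu(z,\cdot)$ a L\'evy measure for each $z$. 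The absence of negative jumps forces $\nu(z,\cdot)$ to be carried by $(0,\infty)$, while $0$ being absorbing forces all characteristics to vanish at $z=0$.

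Next I would feed the exponentials into this formula. For $x\ge0$ the map $e_x\colon z\mapsto e^{-xz}$ belongs to $\mathcal{S}$, hence to the domain, so $\mathscr{L}e_x\in C_0([0,\infty))$ is well defined and, by hypothesis, $\mathscr{L}e_x(z)=(\Sigma(x)z^2+\Psi(x)z)e^{-xz}$. The task is to check that the integro-differential formula above, a priori valid only on $C_c^\infty$, still computes $\mathscr{L}e_x$. I would do this by approximation: write $e_x\chi_n$ with smooth cut-offs $\chi_n\uparrow 1$, observe that $e_x\chi_n\to e_x$ together with its first two derivatives, and pass to the limit using the closedness of $\mathscr{L}$ and dominated convergence in the jump integral (legitimate since $\int(1\wedge h^2)\nu(z,\ddr h)<\infty$ and the characteristics are locally controlled in $z$). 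Dividing the resulting identity by $e^{-xz}\ne0$ and invoking \eqref{eq.duality-converse} yields, for all $\{x,z\}\subset[0,\infty)$,
\[
\Lambda(z,x):=a(z)x^2-b(z)x-c(z)+\int\big(e^{-xh}-1+xh\mathbbm{1}_{(0,1]}(h)\big)\nu(z,\ddr h)=\Sigma(x)z^2+\Psi(x)z.
\]
I expect this extension to the non-compactly-supported $e_x$ to be the main obstacle, the delicate point being precisely the justification that the Courrège formula continues to represent $\mathscr{L}e_x$.

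Applying the last display with $x=0$, i.e.\ to the constant $1\in\mathcal{S}$, the left-hand side equals $-c(z)$ while the right-hand side equals $\Sigma(0)z^2+\Psi(0)z$; as $\mathscr{L}1\in C_0([0,\infty))$ must vanish at infinity and $\Sigma\ge0$, this forces $\Sigma(0)=\Psi(0)=0$ and hence $c\equiv0$. Consequently, for each fixed $z>0$, the function $\Lambda(z,\cdot)$ is a genuine branching mechanism \eqref{branchingmechanism}, its jump part being carried by $(0,\infty)$. Now the cone of branching mechanisms is stable under multiplication by positive constants and under pointwise limits whose limit is finite and continuous at the origin; letting $z\downarrow0$ in $\frac1z\Lambda(z,\cdot)=z\Sigma+\Psi$ therefore exhibits $\Psi$ as a branching mechanism, and letting $z\to\infty$ in $\frac1{z^2}\Lambda(z,\cdot)=\Sigma+\frac1z\Psi$ does the same for $\Sigma$ — the assumed continuity of $\Psi$ and $\Sigma$ at $0$ being exactly the regularity this limit theorem requires. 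Finally, $\Sigma$ is a branching mechanism that is moreover nonnegative on $[0,\infty)$ with $\Sigma(0)=0$; by convexity $\Sigma'(0+)=\lim_{x\downarrow0}\Sigma(x)/x\ge0$, and a nonnegative branching mechanism vanishing at $0$ necessarily has $\int_1^\infty h\,\eta(\ddr h)<\infty$ for its L\'evy measure $\eta$, so $\Sigma$ takes the (sub)critical collision form \eqref{collisionmechanism} with $\frac{c}{2}=\Sigma'(0+)\ge0$ and $\int(h\wedge h^2)\eta(\ddr h)<\infty$.

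It remains to identify $\mathscr{L}$. Substituting the now-established forms \eqref{branchingmechanism}--\eqref{collisionmechanism} of $\Psi$ and $\Sigma$ into $\Lambda(z,x)=\Sigma(x)z^2+\Psi(x)z$ and matching L\'evy--Khintchine data, the uniqueness of the triplet forces $a(z)=\tfrac{a^2}{2}z^2+\tfrac{\sigma^2}{2}z$ and $\nu(z,\ddr h)=z^2\eta(\ddr h)+z\pi(\ddr h)$, with the drift $b(z)$ matching once the two compensation conventions are reconciled (licit since $\int_1^\infty h\,\eta(\ddr h)<\infty$). Reassembling the integro-differential formula of the first step then gives $\mathscr{L}f(z)=z^2\mathrm{L}^{\Sigma}f(z)+z\mathrm{L}^{\Psi}f(z)$ for $f\in C_c^\infty([0,\infty))$, which is precisely \eqref{genZ}, completing the argument.
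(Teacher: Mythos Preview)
Your overall strategy --- Courr\`ege form, extension to exponentials, then limits $z\downarrow 0$ and $z\to\infty$ to isolate $\Psi$ and $\Sigma$ --- coincides with the paper's. The gap is in the step you yourself flag: invoking closedness of $\mathscr{L}$ to carry the Courr\`ege formula from $e_x\chi_n$ to $e_x$. Closedness of a Feller generator is with respect to the uniform norm, so you would need $\tilde{\mathscr{L}}(e_x\chi_n)\to\tilde{\mathscr{L}}e_x$ uniformly in $z$; but the characteristics $a(z),b(z),\nu(z,\cdot)$ carry no a priori global bounds, and your ``locally controlled'' dominated-convergence argument delivers only pointwise convergence at each fixed $z$. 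Nor does knowing that $e_x$ already lies in the domain help: closedness does not assert that $f_n\to f$ with $f$ in the domain implies $\mathscr{L}f_n\to\mathscr{L}f$.

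The paper circumvents this with a purely pointwise argument, and it is precisely here that the Schwartz-space hypothesis on the domain earns its keep. For fixed $z$ and $n\ge z+1$ one estimates directly from the semigroup definition of the generator:
\[
\vert\mathscr{L}(\chi_ne_x)-\mathscr{L}e_x\vert(z)\le\limsup_{t\downarrow0}\frac{\EE_z[e^{-xZ_t};Z_t>n]}{t}\le e^{-xn/2}\lim_{t\downarrow0}\frac{\EE_z[f(Z_t)]}{t}=e^{-xn/2}\mathscr{L}f(z),
\]
where $f$ is any rapidly decaying function vanishing at $z$ and majorizing $e_{x/2}\mathbbm{1}_{[z+1,\infty)}$ (e.g.\ $e_{x/2}$ times a smooth transition). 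Sending $n\to\infty$ gives $\mathscr{L}(\chi_ne_x)(z)\to\mathscr{L}e_x(z)$; combined with $\tilde{\mathscr{L}}(\chi_ne_x)(z)\to\tilde{\mathscr{L}}e_x(z)$ by bounded convergence at fixed $z$, and $\mathscr{L}=\tilde{\mathscr{L}}$ on $C_c^\infty$, this yields $\mathscr{L}e_x=\tilde{\mathscr{L}}e_x$ pointwise. Your closure-of-the-cone step for $\Psi$ and $\Sigma$ is correct in substance; the paper makes it fully rigorous by first extending $\Lambda(z,\cdot)$ analytically to imaginary arguments and then invoking L\'evy's continuity theorem together with sequential closedness of infinitely divisible laws under weak limits.
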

We shall see later in Corollary~\ref{corollaray:genZ} that CBCs actually meet the property assumed on the generator of $Z$ above, so, together with the Feller property  noted in Remark~\ref{remark:feller}, this really is a characterization of  non-explosive CBCs through Laplace duality with diffusions.  Remark also that in the general theory of Feller processes the infinitesimal generator is usually only defined on continuous maps vanishing at infinity. For the complete formulation of the Laplace duality it is however convenient to include in the domain the constants, hence our slight departure from this convention. To avoid any ambiguity, for a measurable $f:[0,\infty)\to \mathbb{R}$, with a limit (in $\mathbb{R}$) at $\infty$ [when $Z$ can explode], by  ``$f$ being in the domain of the infinitesimal generator $\mathscr{L}$ of $Z$'' we mean simply that, for all $z\in [0,\infty)$, $\mathscr{L}f(z)=\lim_{t\downarrow 0}\frac{\EE_z[f(Z_t)]-f(z)}{t}$ (with the interpretation $f(\infty):=\lim_{u\to\infty} f(u)$), the limits existing in $\mathbb{R}$. Consistently with these conventions, $\mathscr{L}$ always includes in its domain and annihilates the constants. In particular, in \eqref{eq.duality-converse}, $\mathscr{L}1=0$ (when $x=0$).
\section{Construction and Lamperti representation of CBCs}\label{ProofofTheorem1}
\subsection{Study of stochastic equation \eqref{cbcequation}: proof of Theorem~\ref{theorem:cbc-sde-contruction}}
Stochastic equations of the form \eqref{cbcequation} fall into the general class of certain SDEs with jumps studied by Dawson and Li \cite{DawsonLi}, Fu and Li \cite{FU2010306}, and Palau and Pardo \cite{zbMATH06836271}. Here we are able to apply directly the result \cite[Proposition~1]{zbMATH06836271}, recognizing that \eqref{cbcequation} is just \cite[Eq.~(5)]{zbMATH06836271} 
\begin{align*}
Z_t=&z+\int_0^t\underline{b}(Z_s)\dd s+\int_0^t\int_{\underline{E}}\underline{\sigma}(Z_s,u) \underline{W}(\dd s,\dd u)\\
&\quad +\int_0^t\int_{\underline{U}}\underline{g}(Z_{s-},u)\underline{M}(\dd s,\dd u)+\int_0^t\int_{\underline{V}} \underline{h}(Z_{s-},v)\overline{ \underline{N}}(\dd s,\dd v),
\end{align*}
 with the following input data of \cite{zbMATH06836271} in which we underline the objects of \cite{zbMATH06836271} to avoid confusion with our own: 
\begin{align*}
\underline{b}(z)&=bz-\frac{c}{2}z^2,\quad z\in [0,\infty)\\
\underline{E}&=\{1,2\}\\
\underline{\sigma}(z,1)&=\sigma\sqrt{z},\quad z\in [0,\infty)\\
\underline{\sigma}(z,2)&=az,\quad z\in [0,\infty)\\
\underline{W}(\dd s,\dd e)&=B(\dd s)\delta_1(\dd e)+W(\dd s)\delta_2(\dd e)\\
&\qquad \text{[white noise on $(0,\infty)\times E$ with intensity $\dd s\underline{\pi}(\dd e)$]}\\
\underline{\pi}&=\delta_1+\delta_2\\
\underline{U}&=[0,\infty)\times (1,\infty)\\
\underline{g}(z,(u,h))&=h\mathbbm{1}_{(0,z]}(u),\quad (z,(u,h))\in [0,\infty)\times \underline{U}\\
\underline{M}(\dd s,\dd u,\dd h)&=\mathcal{N}\vert_{[0,\infty)\times ([0,\infty)\times (1,\infty))}(\dd s,\dd u,\dd h) \\
&\qquad \text{ [Poisson random measure with intensity $\dd s\underline{\mu}(\dd (u,h))$]}\\
\underline{\mu}(\dd (u,h))&=\dd u\pi(\dd h)\\
\underline{V}&=([0,\infty)\times (0,1])\cup ([0,\infty)\times [0,\infty)\times (0,\infty))\\
\underline{h}(z,(u,h))&=h\mathbbm{1}_{(0,z]}(u),\quad (z,(u,h))\in [0,\infty)\times ([0,\infty)\times (0,1])\\
\underline{h}(z,(u_1,u_2,h))&=h\mathbbm{1}_{(0,z]\times (0,z]}(u_1,u_2),\quad (z,(u_1,u_2,h))\in [0,\infty)\times ([0,\infty)\times [0,\infty)\times (0,\infty))\\
\underline{N}(\dd s,\dd u,\dd h)&=\mathcal{N}(\dd s,\dd u,\dd h)\text{ on }[0,\infty)\times ([0,\infty)\times  (0,1])\\
\underline{N}(\dd s,\dd u_1,\dd u_2,\dd h)&=\mathcal{M}(\dd s,\dd u_1,\dd u_2,\dd h)\text{ on } [0,\infty)\times ([0,\infty)\times [0,\infty)\times (0,\infty))\\
\underline{\nu}(\dd (u,h))&=\dd u\pi(\dd h) \text{ and } \underline{\nu}(\dd (u_1,u_2,h))=\dd u_1\dd u_2\eta(\dd h)\\
&\qquad \text{[characteristic measure of $\underline{N}$]}.
\end{align*}
Then the admissibility conditions (i)-(iv) of \cite[p.~60]{zbMATH06836271} are met evidently: (i) $b$ is continuous nonnegative; (ii) $\sigma$ is continuous and vanishing at zero in the first entry; (iii) $g$ is Borel and majorizing minus the identity in the first entry; (iv) $h$ is Borel, vanishing at zero and majorizing minus the identity in the first entry. Choosing $\underline{\tilde{U}}=\underline{U}$ we have $\underline{\mu}(\underline{U}\backslash \underline{\tilde U})=0$ trivially but also $$\int_{\underline{\tilde U}} \vert \underline{g}(z,(u,h))\vert \land 1\underline{\mu}(\dd (u,h))\leq z\pi(1,\infty),\quad z\in [0,\infty),$$
which verifies [is]  (a) of \cite[p.~60]{zbMATH06836271}. Choosing $\underline{b_1}(z)=bz$ and $\underline{b_2}(z)=\frac{c}{2}z^2$ and putting $\underline{r_n}(z)=[1\lor (\vert b\vert+\int n\land h\pi(\dd h))]z$ for $n\in \mathbb{N}_0$ we get  (b) of \cite[p.~60]{zbMATH06836271}: $\underline{b}=\underline{b_1}-\underline{b_2}$, $\underline{b_1}$ continuous, $\underline{b_2}$ nondecreasing;  $\underline{r_n}$  nondecreasing concave, $\int_{0+}\underline{r_n}^{-1}=\infty$ and (since, for $u\in [0,\infty)$, $\vert (\mathbbm{1}_{(0,x]}(u)h)\land n-(\mathbbm{1}_{(0,y]}(u)h)\land n\vert=(h\land n) \mathbbm{1}_{(0,x]\triangle (0,y]}(u)$) $$\vert \underline{b_1}(x)-\underline{b_2}(y)\vert+\int_{\underline{\tilde{U}}}\vert \underline{g}(x,(u,h))\land n-\underline{g}(y,(u,h))\land n\vert\underline{\mu}(\dd (u,h))\leq \underline{r_n}(\vert x-y\vert)$$ for all $n\in \mathbb{N}_0$ and $\{x,y\}\subset [0,\infty)$. One also easily verifies (c) of \cite[p.~61]{zbMATH06836271} taking into account that $\int h\land h^2\eta(\dd h)<\infty$ and $\int_{(0,1]} h^2\pi(\dd h)<\infty$: $z\mapsto \underline{h}(z,v)+z$ is nondecreasing; and,  for each $n\in \mathbb{N}_0$, there is a $\underline{B_n}<\infty$ such that for $\{x,y\}\subset [0,n]$ we have
$$\int_{\underline{E}}\vert \underline{\sigma}(x,u)-\underline{\sigma}(y,u)\vert\underline{\pi}(\dd u)+\int_{\underline{V}}\vert \underline{l}(x,y,v)\vert\land \underline{l}(x,y,v)^2\underline{\nu}(\dd v)\leq \underline{B_n}\vert x-y\vert$$ with $\underline{l}(x,y,v):=\underline{h}(x,v)-\underline{h}(y,v)$ for $v\in \underline{V}$.

All in all the preceding allows us to infer the conclusion of \cite[Proposition~1]{zbMATH06836271}, which is, that for each starting value $z\in [0,\infty)$ there is an a.s. unique $[0,\infty]$-valued c\`adl\`ag process $Z$, adapted to the natural filtration of $(\underline{W},\underline{M},\underline{N})$, that is to say, of $(W,B,\mathcal{N},\mathcal{M})$, such that \eqref{theorem:cbc-sde-contruction:ii}-\eqref{theorem:cbc-sde-contruction:i} of Theorem~\ref{theorem:cbc-sde-contruction} hold true. 

It is clear from \eqref{cbcequation} that $Z$ a.s. has no negative jumps and  that $0$ is an absorbing state for $Z$. Quasi left-continuity also follows directly from \eqref{cbcequation} because the jump times of a homogenous Poisson process are not announcable, while the integrals against the Brownian motions and the Lebesgue integrals are anyway continuous. The proof of the strong Markov property is essentially the same as for the CBM processes \cite[Theorem~2.1(iii)]{vidmar2021continuousstate} and boils down to the strong Markov property for the Brownian and Poisson drivers of \eqref{cbcequation}; we omit the details. Finally, by It\^o's formula, see e.g. Ikeda and Watanabe \cite[Theorem~II.5.1]{ikeda1989stochastic}, and \eqref{cbcequation} again, the martingale conclusion of Theorem~\ref{theorem:cbc-sde-contruction} follows (to see how such a computation evolves on a technical level the reader may again consult the CBM case \cite[Theorem~2.1(v)]{vidmar2021continuousstate}, there is no fundamental difference).

The fact that the law of $Z$ is uniquely determined by the triplet $(\Sigma,\Psi,z)$ follows from the observation that pathwise uniqueness implies uniqueness in law for SDEs, which completes the proof of Theorem~\ref{theorem:cbc-sde-contruction}.\qed

Let us justify (further) our calling $\mathscr{L}$ the generator of $Z$. We use, in what follows, the conventions concerning the infinitesimal generators to be found immediately after the statement of  Theorem~\ref{thm:laplace-converse}.

\begin{corollary}\label{corollaray:genZ}
Suppose (a) $f\in C^2([0,\infty))$  has a finite limit at infinity and (b)  $\mathscr{L}f$ is  vanishing at infinity.  Then the process $(f(Z_t)-\int_0^t \mathscr{L}f(Z_s)\dd s,t\geq 0)$ is a  martingale and $\mathscr{L}f$  gives the action of the infinitesimal generator of $Z$ on $f$ (here we understand $f(\infty):=\lim_{u\to\infty} f(u)$ and $\mathscr{L}f(\infty):=0$, of course). Any function from  the set $$\mathcal{D}:=\left\{f\in C^2([0,\infty)): \exists\,\underset{z\rightarrow \infty}{\lim}  f(z)\in \mathbb{R}  \text{ and } \underset{z\rightarrow \infty}{\lim} z^2\big(\vert f(z)-f(\infty)\vert+\vert f'(z)\vert+\vert f''(z)\vert \big)=0\right\}$$ meets the properties (a)-(b). 
\end{corollary}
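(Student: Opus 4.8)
The plan is to prove Corollary~\ref{corollaray:genZ} in two stages: first establish the martingale property for $f$ satisfying (a)--(b), and then verify that this identifies $\mathscr{L}f$ as the generator action; the claim that $\mathcal{D}$ meets (a)--(b) is a routine tail estimate deferred to the end. The starting point is the local martingale furnished by Theorem~\ref{theorem:cbc-sde-contruction} (taking $\alpha=0$): for every $n\in [0,\infty)$ the stopped process
\[
M^n_t := f(Z_{t\land \zeta_n^+}) - \int_0^{t\land \zeta_n^+}\mathscr{L}f(Z_s)\,\dd s,\quad t\geq 0,
\]
is a local martingale. Since $f$ is bounded (having a finite limit at $\infty$ and being continuous on $[0,\infty)$, hence bounded on compacts and near $\infty$) and $\mathscr{L}f$ vanishes at infinity (hence is also bounded), each $M^n$ is in fact a \emph{bounded-increment} local martingale on $[0,\zeta_n^+]$; a standard localization then upgrades it to a genuine martingale. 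The task is to pass to the limit $n\to\infty$ and recover a martingale for the unstopped process $f(Z_t)-\int_0^t\mathscr{L}f(Z_s)\,\dd s$.

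The subtle point — and the one I expect to be the main obstacle — is that the process $Z$ may explode, so that $\zeta_\infty=\lim_n\zeta_n^+$ can be finite with positive probability, and one must control the behavior of $f(Z_t)$ and of $\int_0^t\mathscr{L}f(Z_s)\,\dd s$ across the explosion time. Here the conventions $f(\infty):=\lim_{u\to\infty}f(u)$ and $\mathscr{L}f(\infty):=0$ are exactly what make the statement true: by Theorem~\ref{theorem:cbc-sde-contruction}\eqref{theorem:cbc-sde-contruction:i}, $Z$ is absorbed at $\infty$ on $[\zeta_\infty,\infty)$, so on this event $f(Z_t)=f(\infty)$ becomes constant and, because $\mathscr{L}f$ vanishes at infinity together with the absorption, the integrand contributes nothing further after $\zeta_\infty$. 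Concretely I would argue that as $n\to\infty$, on $\{t<\zeta_\infty\}$ one has $t\land \zeta_n^+\uparrow t$ and continuity of paths off the jump times gives $M^n_t\to f(Z_t)-\int_0^t\mathscr{L}f(Z_s)\,\dd s$, while on $\{t\geq \zeta_\infty\}$ one uses right-continuity and the absorption at $\infty$ to identify the limit with $f(\infty)-\int_0^{\zeta_\infty}\mathscr{L}f(Z_s)\,\dd s$, which matches the intended value of $f(Z_t)-\int_0^t\mathscr{L}f(Z_s)\,\dd s$ under the stated conventions. Uniform boundedness of $(M^n_t)_n$ (from boundedness of $f$ and of $\mathscr{L}f$, the latter controlling the integral by $t\,\|\mathscr{L}f\|_\infty$) then licenses dominated convergence, so the $L^1$-limit of martingales is a martingale.

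Having the martingale, the generator identification is immediate: for fixed $z$ the martingale property yields $\EE_z[f(Z_t)]-f(z)=\EE_z\!\left[\int_0^t\mathscr{L}f(Z_s)\,\dd s\right]$, and dividing by $t$ and letting $t\downarrow 0$, right-continuity of $s\mapsto \mathscr{L}f(Z_s)$ at $s=0$ (with $Z_0=z$) together with bounded convergence gives $\lim_{t\downarrow 0}\frac{\EE_z[f(Z_t)]-f(z)}{t}=\mathscr{L}f(z)$, which is precisely the definition of the generator action adopted just after Theorem~\ref{thm:laplace-converse}. Finally, to see that every $f\in \mathcal{D}$ satisfies (a)--(b), note (a) is built into the definition of $\mathcal{D}$, while for (b) one inspects \eqref{genZ}: $\mathscr{L}f(z)=z^2\mathrm{L}^\Sigma f(z)+z\mathrm{L}^\Psi f(z)$, and from \eqref{genLevy}--\eqref{genLevy-2} each term is a sum of $z^2$ or $z$ multiplied by $f''$, $f'$, or an integral of increments of $f$; the decay hypothesis $z^2(|f(z)-f(\infty)|+|f'(z)|+|f''(z)|)\to 0$ dominates the diffusive and drift contributions directly, and the jump integrals are handled by splitting at $h=1$ and using $f$ bounded together with the integrability $\int 1\wedge h^2\,\pi(\dd h)<\infty$ and $\int h\wedge h^2\,\eta(\dd h)<\infty$, the factor $z^2$ or $z$ being absorbed by the quadratic decay of $f-f(\infty)$ and its derivatives. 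This yields $\mathscr{L}f\to 0$ at infinity and completes the proof.
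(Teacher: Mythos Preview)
Your proposal is correct and follows essentially the same approach as the paper: both start from the local martingale \eqref{cbc-construction:i:a} with $\alpha=0$, use boundedness of $f$ and $\mathscr{L}f$ to upgrade to a true martingale, pass to the limit $n\to\infty$ via dominated convergence together with absorption at $\infty$ and the conventions $f(\infty)$, $\mathscr{L}f(\infty)=0$, and then read off the generator action by dividing by $t$ and letting $t\downarrow 0$. The only minor stylistic differences are that the paper phrases the limiting step through the ``constant expectation at bounded stopping times'' criterion rather than ``$L^1$-limit of martingales'', and for the $\mathcal{D}$-claim it writes out explicitly the Taylor formula with integral remainder $f(z+h)-f(z)-hf'(z)=h^2\int_0^1 f''(z+hv)(1-v)\,\dd v$ to control the small-jump contribution, which makes precise the ``splitting at $h=1$'' you allude to.
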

\begin{example}\label{example:in-domain-genZ}
The  space $\mathcal{S}$ of Theorem~\ref{thm:laplace-converse}, a fortiori $C^\infty_c([0,\infty))$, is a subset of $\mathcal{D}$. In particular, for $x \in [0,\infty)$ the exponential map $([0,\infty)\ni z\mapsto e^{-xz})$  belongs to $\mathcal{D}$; moreover, $\mathcal{C}:=\{\int e^{-\cdot x}\nu(\dd x):\nu\text{ a finite signed measure on }\mathcal{B}_{[0,\infty)}\}\subset \mathcal{D}$. 
\end{example}
\begin{remark}\label{remark:action}
With a view towards the Laplace duality of \eqref{dualgen} it is perhaps worth noting explicitly that for  $f\in C^2_b([0,\infty))$ with $\mathscr{A}f$ bounded, for the exponential maps in particular, likewise the process  $(f(U_t)-\int_0^t \mathscr{A}f(U_s)\dd s,t\geq 0)$ is a martingale and $\mathscr{A}f$  gives the action of the infinitesimal generator of $U$ on $f$.
\end{remark}
\begin{proof}[Proof of Corollary~\ref{corollaray:genZ}]
Note that $f$ and $\mathscr{L}f$ are both bounded and continuous. Let $S$ be a bounded stopping time. The local martingale of \eqref{cbc-construction:i:a} with $\alpha=0$ is bounded up to every bounded time, therefore a martingale. Sampling this martingale at $S$ we get $$\EE_z\left[ f(Z_{S\land \zeta_n^+})-\int_0^{S\land \zeta_n^+} \mathscr{L}f(Z_s)\dd s\right]=f(z),\quad n\in [0,\infty).$$ Letting $n\to\infty$, $ f(Z_{S\land \zeta_n^+})\to f(Z_{S\land \zeta_\infty})$ boundedly (since $f(\infty)\in \mathbb{R}$) and $\int_0^{S\land \zeta_n^+} \mathscr{L}f(Z_s)\dd s\to \int_0^{S\land \zeta_\infty} \mathscr{L}f(Z_s)\dd s$ boundedly by bounded convergence (for the Lebesgue integral). By bounded convergence in the displayed formula we infer that $$\EE_z\left[ f(Z_{S\land \zeta_\infty})-\int_0^{S\land \zeta_\infty} \mathscr{L}f(Z_s)\dd s\right]=f(z).$$ Since $Z=\infty$ on $[\zeta_\infty,\infty)$ and since $\mathscr{L}f(\infty)=0$ we may get rid of ``$\land \zeta_\infty$''. It being true for arbitrary $S$ entails that $(f(Z_t)-\int_0^t \mathscr{L}f(Z_s)\dd s,t\geq 0)$ is a martingale that is even bounded up to every bounded time. The second claim now follows easily: 
$$\lim_{t\downarrow 0}\frac{\EE_z[f(Z_t)]-f(z)}{t}=\lim_{t\downarrow 0}\frac{\EE_z[\int_0^t\mathscr{L}f(Z_s)\dd s]}{t}=\mathscr{L}f(z),$$
by bounded convergence and the continuity of $\mathscr{L}f$ (and the right-continuity of $Z$ at time zero).

 Take now  $f\in \mathcal{D}$ and we check that $\underset{z\rightarrow \infty}{\lim}\mathscr{L}f(z)=0$. Since $\mathscr{L}$ annihilates the constants we may and do assume that $\lim_\infty f=0$. Since $f\in C^2([0,\infty))$ one has for it the following Taylor formula with integral form of  remainder, see e.g. Zorich \cite[page 363]{zbMATH02008480}, 
$$f(z+h)-f(z)-hf'(z)=h^2\int_{0}^{1}f''(z+hv)(1-v)\ddr v,\quad \{z,h\}\subset [0,\infty).$$
Recalling \eqref{genLevy}-\eqref{genZ} we estimate
\begin{align*}
z\left \lvert \mathrm{L}^{\Psi}f(z)\right \lvert&\leq \frac{\sigma^2}{2}\cdot z\vert f''(z)\vert +\vert b\vert\cdot z\vert f'(z)\vert\\
&\quad + z\left \lvert \int_0^{1}\pi(\ddr h)h^2\int_{0}^{1}f''(z+hv)(1-v)\ddr v \right\lvert+ z\left \lvert \int_{1}^{\infty}\pi(\ddr h)\left(f(z+h)-f(z)\right)\right \lvert\\
&\leq \frac{\sigma^2}{2}\cdot z\vert f''(z)\vert +\vert b\vert\cdot z\vert f'(z)\vert+\int_{0}^{1}\pi(\ddr h)h^2\int_{0}^{1}(z+hv)\lvert f''(z+hv) \lvert (1-v)\ddr v\\
&\quad +\int_1^\infty (z+h)\lvert f(z+h)\lvert \pi(\ddr h)+ \pi(1,\infty) z\lvert f(z)\lvert,\quad z\in [0,\infty).
\end{align*}
Now, the terms $(z+hv)f''(z+hv)$ and $(z+h)f(z+h)$ converge towards $0$ as $z$ goes to $\infty$ uniformly for positive $h$, $v$, and in particular are bounded. Hence by dominated convergence, both integrals in the upper bound above vanish when $z$ goes to $\infty$. The same is true for the other terms. Similar calculations entail that $z^2\left \lvert \mathrm{L}^{\Sigma}f(z)\right \lvert$ converges to $0$ as $z$ goes to $\infty$, which then allows us to conclude.
\end{proof}

\subsection{CBCs as time-changes of CBMs}
CBMs are a kind-of generalization of CBIs in which, roughly speaking, the immigration subordinator is replaced by a spectrally positive Lévy process (this will be the process $X$ in \eqref{CBMSDE} below, $Y$ being the CBM). Though, CBMs are stopped when reaching $0$, while CBIs are not. The Brownian part, and drift when it is negative, of $X$ are interpreted as migration (emigration/immigration) in the population. Such processes were defined and studied by Vidmar in \cite{vidmar2021continuousstate}. 

CBCs may be connected to  CBMs via time-change. On an heuristic level this is clear from the form of their generators. Indeed, comparing \eqref{genZ} with \cite[Eq.~(2.1)]{vidmar2021continuousstate}, which gives the action of the generator $\mathscr{L}'$ of a CBM $Y$  with branching mechanism $\Sigma$ and migration mechanism $\Psi$ on a $C^2_b([0,\infty))$ map $g$ satisfying $\mathrm{L}^\Psi g(0)=0$ as $$\mathscr{L}'g(y):=\mathrm{L}^\Psi g(y)+y\mathrm{L}^\Sigma g(y),\quad y\in [0,\infty),$$
strongly suggests that $Z$ should be just a Lamperti-type transform of a such  a CBM  by the inverse of $\int_0^\cdot\frac{\dd u}{Y_u}$. It is indeed so:

\begin{theorem}\label{thm:time-change}
Put $\kappa:=\int_0^{\zeta_\infty}Z_t\dd t$, define the additive functional $\gamma:=\int_0^\cdot Z_t\dd t$ and let  $\gamma^{-1}$ be its inverse on $[0,\kappa)$, extended by $\zeta_0^-\land\zeta_\infty$ on $[\kappa,\infty)$. Set $Y:=Z_{\gamma^{-1}}$, defined on $[0,\zeta)$, $\zeta:=\int_0^{\zeta_\infty}Z_u\dd u+\infty\mathbbm{1}_{\{\zeta^-_0<\zeta_\infty\}}$. Then $Y=(Y_u)_{u\in[0,\zeta)}$ is a CBM process with branching mechanism $\Sigma$, migration mechanism $\Psi$ (and initial value $z$), $\zeta=\infty$ a.s. (non-explosivity) and letting $\omega$ be the right-continuous inverse of $\int_0^\cdot\frac{\dd u}{Y_u}$ on $[0,\int_0^{\zeta}\frac{\dd u}{Y_u})$ [with the understanding $1/0=\infty$] we have a.s. $$\zeta_\infty=\int_0^{\infty}\frac{\dd u}{Y_u}\quad \text{ and }\quad Z_t=Y_{\omega(t)}\text{ for } t\in [0,\zeta_\infty).$$ 
\end{theorem}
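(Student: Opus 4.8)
The plan is to turn the equation \eqref{cbcequation} for $Z$ into the stochastic equation defining a CBM with branching mechanism $\Sigma$ and migration mechanism $\Psi$, via the random time change $\gamma^{-1}$, and then to identify $Y$ by the well-posedness of the CBM equation proved in \cite{vidmar2021continuousstate}. The driving heuristic is the generator identity $\mathscr{L}f=z\,\mathscr{L}'f$ (compare \eqref{genZ} with $\mathscr{L}'g=\mathrm{L}^\Psi g+y\,\mathrm{L}^\Sigma g$): dividing $\mathscr{L}$ by the rate $z$ is exactly the infinitesimal effect of slowing time by the additive functional $\gamma=\int_0^\cdot Z_s\,\dd s$, so $Y=Z_{\gamma^{-1}}$ should carry generator $\mathscr{L}'$. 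Concretely I would fix $u<\kappa$, set $\tau_u:=\gamma^{-1}(u)$ (so $\int_0^{\tau_u}Z_s\,\dd s=u$ and $\dd\tau_u=\dd u/Y_u$), and time-change each term of \eqref{cbcequation} at $\tau_u$. The Lebesgue drifts collapse by elementary substitution, $b\int_0^{\tau_u}Z_s\,\dd s=bu$ and $-\tfrac c2\int_0^{\tau_u}Z_s^2\,\dd s=-\tfrac c2\int_0^uY_r\,\dd r$, producing the linear migration drift and the $\Sigma$-branching drift. The Brownian integrals, evaluated at $\tau_u$, are continuous local martingales of quadratic variation $\sigma^2u$ and $a^2\int_0^uY_r\,\dd r$, so by Dambis--Dubins--Schwarz they become $\sigma\beta_u$ and $a\int_0^u\sqrt{Y_r}\,\dd W'_r$, the Gaussian part of the migration and the Feller-diffusion part of the branching. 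Finally the process-dependent cut-offs of the Poisson integrals combine with the slowdown $\dd s=\dd r/Y_r$: the $\mathcal N$-term carries rate $Z_{s-}\pi(\dd h)\,\dd s=\pi(\dd h)\,\dd r$ (the $\Psi$-Lévy jumps of the migration), while the $\mathcal M$-term carries rate $Z_{s-}^2\eta(\dd h)\,\dd s=Y_{r-}\eta(\dd h)\,\dd r$ (the $(0,Y_{r-}]$-indexed $\Sigma$-branching jumps). Assembling the six terms shows $Y$ solves the CBM$(\Sigma,\Psi)$ equation on $[0,\kappa)$, and pathwise uniqueness for that equation \cite{vidmar2021continuousstate} identifies $Y$ as the CBM with mechanisms $(\Sigma,\Psi)$ started at $z$.

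The step I expect to be the main obstacle is making this change of variables rigorous for the martingale and jump drivers: one must verify that, relative to the time-changed filtration $(\mathcal F_{\tau_u})_u$, $\beta$ and $W'$ are genuine Brownian motions and the time-changed $\mathcal N,\mathcal M$ are Poisson random measures with exactly the intensities computed above, with all four drivers remaining mutually independent. I would establish this through the standard theory of time changes of semimartingales --- Dambis--Dubins--Schwarz for the continuous local martingales and the compensator / optional-time-change (Watanabe-type) characterisation for the integer-valued random measures --- checking that compensators transform as asserted and that the independence of the branching pair $(B,\mathcal N)$ and the collision pair $(W,\mathcal M)$, which share no jumps and have orthogonal continuous parts, survives the common continuous time change.

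It remains to read off the boundary statements. Since CBMs do not explode \cite{vidmar2021continuousstate}, $Y$ cannot reach $\infty$ in finite clock-time; hence whenever $Z$ explodes the clock must run out, $\kappa=\int_0^{\zeta_\infty}Z_t\,\dd t=\infty$, and together with the extension of $\gamma^{-1}$ by $\zeta_0^-\wedge\zeta_\infty$ this realises $Y$ as a non-exploding process on $[0,\infty)$, giving $\zeta=\infty$ a.s. The inverse relations are then elementary: the substitution $r=\gamma(s)$ yields $\int_0^{\gamma(t)}\dd r/Y_r=\int_0^t\dd s=t$, so the clock $\int_0^\cdot\dd r/Y_r$ is inverse to $\gamma$; thus $\omega=\gamma$ and $Z_t=Y_{\gamma(t)}=Y_{\omega(t)}$ for $t\in[0,\zeta_\infty)$, and letting $t\uparrow\zeta_\infty$ gives $\zeta_\infty=\int_0^\infty\dd u/Y_u$ (with the convention $1/0=\infty$ accounting for the mass accrued after $Y$ reaches $0$).
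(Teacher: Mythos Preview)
Your approach is essentially the paper's: time-change \eqref{cbcequation} term by term, identify the new drivers via the L\'evy/Watanabe martingale characterisations, and recognise the resulting equation as the CBM$(\Sigma,\Psi)$ SDE of \cite{vidmar2021continuousstate}. Two points the paper handles explicitly that you gloss over are worth flagging.

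First, the time-changed drivers $B',W',\mathcal N',\mathcal M'$ are a priori only defined on $[0,\kappa)$; to invoke pathwise uniqueness for the CBM equation (and hence to identify $Y$ with \emph{the} CBM and borrow its non-explosion) one needs genuine Brownian motions and Poisson random measures on all of $[0,\infty)$. The paper deals with this by enlarging the probability space with independent copies $(\tilde B,\tilde W,\tilde{\mathcal N},\tilde{\mathcal M})$ and splicing them in on $[\kappa,\infty)$; only then do the martingale characterisations yield honest drivers in the enlarged filtration, and only then does the well-posedness result of \cite{vidmar2021continuousstate} apply. Without this step your deduction ``$Y$ solves the CBM equation on $[0,\kappa)$, hence by pathwise uniqueness $Y$ is the CBM, hence $\kappa=\infty$ on $\{\zeta_\infty<\infty\}$'' is circular.

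Second, CBMs can explode in general; the non-explosion you need is a consequence of $\Sigma$ being (sub)critical, see \cite[Corollary~2.1]{vidmar2021continuousstate}, and this hypothesis on the collision mechanism is precisely what makes the argument close.
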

Thus the CBC $Z$ may be viewed as the process got by driving along the sample paths of the (non-explosive) CBM $Y$ with a velocity that is given by its position.

%
\begin{proof}

We time-change  \eqref{cbcequation} into an SDE for the process $Y$.

By definition $\gamma^{-1}$ is a continuous time-change for the filtration $\FF$. Possibly by enlarging the underlying probability space we grant ourselves access to the following mutually independent stochastic items, independent also of $(B,W,\mathcal{M},\mathcal{N})$: Brownian motions $\tilde B$, $\tilde W$; Poisson random measures $\tilde{\mathcal{M}}(\dd s,\dd u,\dd h)$ with intensity  $\dd s\dd u\eta(\dd h)$,  $\tilde{\mathcal{N}}(\dd s,\dd h)$ with intensity  $\dd s\pi(\dd h)$. Let $\FF'$ be $\FF_{\gamma^{-1}}$ enlarged by the natural filtration of $(\tilde B,\tilde W,\tilde M,\tilde N)$ and augmented.

 Put $B':=\int_0^{\gamma^{-1}(\cdot)}\sqrt{Z_s}\dd B_s$ on $[0,\kappa)$ and extend it by the increments of $\tilde B$ after $\kappa$. By the martingale characterization of Brownian motion \cite[Theorem~II.6.1]{ikeda1989stochastic} it follows that $B'$ is an $\FF'$-Brownian motion. In the same manner we procure an $\FF'$-Brownian motion $W'$. The covariation process of $W'$ with $B'$ vanishes; thus $W'$ and $B'$ are actually independent $\FF'$-Brownian motions. 
 
 Next define $\mathcal{M}'([0,t]\times L\times A):=\int_0^{\gamma^{-1}(t)}\int_{0}^{Z_{s-}}\int_L\int_A\mathcal{M}(\dd s,\dd u_1,\dd u_2,\dd h)$ for $t\in [0,\kappa)$ and Borel $L$, $A$. The measure $\mathcal{M}'$ is extended in the first coordinate from $[0,\kappa)$ to $[0,\infty)$ by using $\tilde{\mathcal{M}}$ on $[\kappa,\infty)$. From the martingale characterization of Poisson point processes \cite[Theorem~II.6.2]{ikeda1989stochastic} it follows that $\mathcal{M}'(\dd s,\dd u,\dd h)$ is an $\FF'$-Poisson random measure with intensity measure $\dd s\dd u\eta(\dd h)$. In an analogous way we avail ourselves of an $\FF'$-Poisson random measure $\mathcal{N}'(\dd s,\dd h)$ of intensity $\dd s\pi(\dd h)$.  The  Poisson point processes (corresponding to) $\mathcal{M}'$ and $\mathcal{N}'$ a.s. have no jumps in common; therefore are actually independent.
 
Being defined in the common filtration $\FF'$, the  Brownian pair $(W',B')$ and Poisson pair $(\mathcal{N}',\mathcal{M}')$ are also automatically independent. Thus $W',B',\mathcal{N}',\mathcal{M}'$ are jointly independent.

Rewriting  \eqref{cbcequation} in terms of $Y$ we get a.s.
\begin{equation}\label{CBMSDE} Y_t= X_{t\land \sigma_0}+a\int_{0}^{t}\!\!\sqrt{Y_s} \ddr W'_s-\frac{c}{2}\int_{0}^{t}Y_s\ddr s+\int_{0}^{t} \int_0^{Y_{s-}}\int_0^\infty h\bar{\mathcal{M}}'(\ddr s,\ddr u,\ddr h), \quad t\in [0,\zeta),
\end{equation}
where 
$$X_t:=z+\sigma B'_t+bt  +\int_{0}^{t}\int_0^{1}  h\bar{\mathcal{N}'}(\ddr s,\ddr h)+\int_{0}^{t}\int_1^\infty  h\mathcal{N}'(\ddr s,\ddr h),\quad t\in [0,\infty),$$ and where $\sigma_0:=\inf\{t\in [0,\zeta):Y_t=0\}$; also $\sup_{[0,\zeta)}Y=\infty$ a.s. on $\{\zeta<\infty\}$, by construction. It follows from \cite[Theorem~2.1]{vidmar2021continuousstate} that $Y$ is a CBM with branching mechanism $\Sigma$, migration mechanism $\Psi$ (and initial value $z$ that of $Z$), which is non-explosive  because $\Sigma$ is (sub)critical \cite[Corollary~2.1]{vidmar2021continuousstate}. The proof of Theorem~\ref{thm:time-change} is completed by pathwise arguments to go back from $Y$ to $Z$.
\end{proof}
 When $-\Psi$ is the Laplace exponent of a subordinator, the CBM process $Y$ of Theorem~\ref{thm:time-change} is a CBI process with immigration mechanism $-\Psi$, stopped at its first hitting time of $0$. \\\\
\noindent \textbf{Proof of Proposition~\ref{propositionstatespace}}:\label{proof-2.2} Recall   that by definition $z^*=(\limsup_{u\to\infty} \frac{-\Psi(u)}{\Sigma(u)})\vee 0$.  Notice first that $z^*>0$ if and only if we are in the subordinator case with $\mu:=\underset{u\rightarrow \infty}{\lim}\frac{-\Psi(u)}{u}>0$ and $\Sigma$ is of finite variation type, i.e. $D:=\underset{u\rightarrow \infty}{\lim} \frac{\Sigma(u)}{u}<\infty$ (the two limits exist a priori in $[0,\infty)$ and $(0,\infty]$, respectively), in which case, $z^*=\limsup_{u\to\infty} \frac{-\Psi(u)}{\Sigma(u)}=\lim_{u\to \infty} \frac{-\Psi(u)}{\Sigma(u)}=\frac{\mu}{D}$. Assume now $z^*>0$. According to \cite[Proposition~5]{Duhalde}, when $Y_0=z>0$, then a.s. $Y_t\geq e^{-Dt}z +z^*(1-e^{-Dt})$, which is $>z^*$ for all $t\in [0,\infty)$ or is $\geq z$ and tending to $z^*$ as $t\to\infty$ according as to whether $z>z^*$ or $z\in (0,z^*]$; here $Y$ is as in Theorem~\ref{thm:time-change}.  Hence, by the time-change representation of the CBC process $Z$, one also has a.s. $Z_t>z^*$ for all $t\in [0,\infty)$  as soon as its starting value $z$ lies in $(z^*,\infty)$ and similarly one has a.s. $Z_t\geq z$ for all $t\in [0,\infty)$ with $\liminf_{t\to\infty}Z_t\geq z^*$ in case $z\in (0,z^*]$. \qed

\section{Attraction to the boundaries: proof of Theorem \ref{attractiveboundaries}}\label{sectionproofattraction} 

The proof of Theorem~\ref{attractiveboundaries} is based on the time-change representation of CBCs via CBMs. Let then, for the purposes of this section, $Y=(Y_u)_{u\in [0,\zeta)}$ be the CBM of Theorem~\ref{thm:time-change}.
We state several lemmas, the combined conclusion of which will be Theorem~\ref{attractiveboundaries}. Recall from the beginning  of Subsection~\ref{subsection:classification} that $x_0$ is a fixed strictly positive real number and the notation \eqref{page:S_V}-\eqref{eq:SZ-def}.

\begin{lemma}\label{lemmaexitprobai} Let  $z^*<\mathsf{a}<z<\infty$. If $S_V(0,x_0]<\infty$ then  $\mathbb{P}_z(\zeta^-_\mathsf{a}<\zeta_\infty)=\frac{S_Z(z)}{S_Z(\mathsf{a})}\in (0,1)$.
If  $S_V(0,x_0]=\infty$ then $\mathbb{P}_z(\zeta^-_\mathsf{a}<\zeta_\infty)=1$.
\end{lemma}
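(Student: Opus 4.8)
The plan is to push everything through the time-change of Theorem~\ref{thm:time-change}, reducing the first-passage event of the (possibly explosive) CBC $Z$ to a first-passage event of the \emph{non-explosive} CBM $Y$, and then to evaluate the latter with a scale-type martingale built from $S_Z$. First I would record that, along $Z_t=Y_{\omega(t)}$ with $\zeta_\infty=\int_0^\infty \frac{\dd u}{Y_u}$, one has $\{\zeta^-_\mathsf{a}<\zeta_\infty\}=\{T^-_\mathsf{a}<\infty\}$ a.s., where $T^-_\mathsf{a}:=\inf\{u:Y_u\le \mathsf{a}\}$: on $\{T^-_\mathsf{a}<\infty\}$ the integral $\int_0^{T^-_\mathsf{a}}\frac{\dd u}{Y_u}\le T^-_\mathsf{a}/\mathsf{a}$ is finite and strictly below $\zeta_\infty$ (since $Y$ lives on), while on $\{T^-_\mathsf{a}=\infty\}$ the process $Z$ never drops to $\mathsf{a}$ inside $[0,\zeta_\infty)$. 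As $\mathsf{a}>z^*$, Proposition~\ref{propositionstatespace} keeps $Y$ in $(z^*,\infty)$, and absence of negative jumps gives $Y_{T^-_\mathsf{a}}=\mathsf{a}$ on $\{T^-_\mathsf{a}<\infty\}$. Writing $p_N:=\mathbb{P}_z(T^-_\mathsf{a}<T^+_N)$ with $T^+_N:=\inf\{u:Y_u\ge N\}$ and using $T^+_N\uparrow\infty$ a.s.\ (non-explosivity), it then suffices to compute $\lim_{N\to\infty}p_N=\mathbb{P}_z(T^-_\mathsf{a}<\infty)$.

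The engine is the identity $\mathscr{L}_z S_Z(z)=\int_0^\infty \mathscr{A}_x e^{-xz}\,S_V(\dd x)$, from the Laplace duality \eqref{dualgen}. Integrating by parts twice against $S_V(\dd x)=\frac{1}{\Sigma(x)}E(x)\,\dd x$, where $E(x):=e^{\int_x^{x_0}\Psi/\Sigma}$ solves $E'=-\tfrac{\Psi}{\Sigma}E$, the interior terms cancel (this being exactly the Siegmund-adjoint relation $\mathscr{A}^{\ast}S_V'=0$) and only a boundary contribution survives,
\[
\mathscr{L}_z S_Z(z)=\big[\,{-}z e^{-xz}\,E(x)\,\big]_{x=0}^{x=\infty}=z\,E(0),\qquad z>z^*,
\]
the term at $\infty$ vanishing precisely because $e^{-xz}E(x)\to 0$ for $z>z^*$ (this is where $z^*$ enters, controlling the growth of $S_V$ at $+\infty$). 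I would then argue that $S_V(0,x_0]<\infty$ forces $E(0)=0$: since $\Sigma'(0+)=c/2<\infty$ gives $\Sigma(x)=O(x)$ and hence $\int_{0+}\dd x/\Sigma(x)=\infty$, a strictly positive $\liminf_{x\to 0}E(x)$ would make $S_V(0,x_0]=\int_0^{x_0}\frac{E}{\Sigma}\,\dd x$ diverge. Thus when $S_V(0,x_0]<\infty$ the function $S_Z$ is finite, strictly decreasing, completely monotone, with $S_Z(\infty)=0$, and $\mathscr{L}'S_Z=\tfrac1z\mathscr{L}S_Z\equiv 0$ on $(z^*,\infty)$, i.e.\ $S_Z$ is genuinely $Y$-harmonic.

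In the case $S_V(0,x_0]<\infty$ I would run optional stopping on the bounded martingale $(S_Z(Y_{u\wedge T^-_\mathsf{a}\wedge T^+_N}))_u$ — bounded because $S_Z$ is decreasing and $S_Z(Y_{T^+_N})\le S_Z(N)$, stopped at a time a.s.\ finite since $Y$ exits the compact interval $[\mathsf{a},N]\subset(0,\infty)$ in finite time. Crucially $S_Z$ must be harmonic on all of $(z^*,\infty)$, not just on $(\mathsf{a},N)$, to absorb the upward jumps of $Y$. This gives $S_Z(z)=S_Z(\mathsf{a})p_N+\mathbb{E}_z[S_Z(Y_{T^+_N})\mathbbm{1}_{\{T^+_N<T^-_\mathsf{a}\}}]$, and as $N\to\infty$ the last term is squeezed to $0$ by $0\le S_Z(Y_{T^+_N})\le S_Z(N)\downarrow S_Z(\infty)=0$, yielding $\mathbb{P}_z(T^-_\mathsf{a}<\infty)=S_Z(z)/S_Z(\mathsf{a})\in(0,1)$.

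The delicate case, and the main obstacle, is $S_V(0,x_0]=\infty$, where $S_Z\equiv\infty$ and the clean harmonic function is gone. Here I would use the finite truncations $g_\epsilon(w):=\int_\epsilon^\infty e^{-xw}\,S_V(\dd x)$, $\epsilon>0$, for which the same computation gives $\mathscr{L}'g_\epsilon=e^{-\epsilon\,\cdot}\,E(\epsilon)\ge 0$, so $g_\epsilon(Y)$ is a \emph{sub}martingale; optional stopping together with $g_\epsilon(Y_{T^+_N})\le g_\epsilon(N)$ produces the one-sided bound $p_N\ge \frac{g_\epsilon(z)-g_\epsilon(N)}{g_\epsilon(\mathsf{a})-g_\epsilon(N)}$. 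Sending first $N\to\infty$ (so $g_\epsilon(N)\downarrow 0$) gives $\mathbb{P}_z(T^-_\mathsf{a}<\infty)\ge g_\epsilon(z)/g_\epsilon(\mathsf{a})$ for every $\epsilon$; sending then $\epsilon\downarrow 0$, both $g_\epsilon(z)$ and $g_\epsilon(\mathsf{a})$ blow up but are dominated by the \emph{same} divergent mass of $S_V$ near $0$, where $e^{-xz}/e^{-x\mathsf{a}}=e^{-x(z-\mathsf{a})}\to 1$, whence $g_\epsilon(z)/g_\epsilon(\mathsf{a})\to 1$ and so $\mathbb{P}_z(T^-_\mathsf{a}<\infty)=1$. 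The steps needing most care are the a.s.\ finiteness of the exit time of $Y$ from $[\mathsf{a},N]$ (regularity and non-degeneracy of the CBM, using $\Sigma>0$ on $(0,\infty)$), the interchange of $\mathscr{L}'$ with the integral against $S_V$ along with the boundary-term bookkeeping at $0$ and $\infty$, and the final iterated $N\to\infty$, $\epsilon\downarrow 0$ limit.
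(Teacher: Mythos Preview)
Your approach is correct and genuinely different from the paper's. Both arguments start by pushing the event through the Lamperti time-change of Theorem~\ref{thm:time-change} to the non-explosive CBM $Y$, reducing $\mathbb{P}_z(\zeta^-_\mathsf{a}<\zeta_\infty)$ to $\mathbb{P}_z(\sigma_\mathsf{a}<\infty)$ for $\sigma_\mathsf{a}:=\inf\{u:Y_u\le \mathsf{a}\}$. From there the paper goes an entirely analytic route: it invokes the known Laplace transform $\mathbb{E}_z[e^{-\theta\sigma_\mathsf{a}}]=\Phi_\theta(z)/\Phi_\theta(\mathsf{a})$ for CBMs/CBIs from \cite{vidmar2021continuousstate,Duhalde}, with $\Phi_\theta(z)=\int_0^\infty e^{-zx+\int_{x_0}^x\theta/\Sigma}\,S_V(\ddr x)$, and sends $\theta\downarrow 0$, handling the two cases by monotone/dominated convergence versus a splitting that yields the lower bound $e^{-zx_0}$ (then $x_0\downarrow 0$). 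You instead \emph{construct} the scale function $S_Z$ as a $Y$-harmonic function directly, via the Laplace duality \eqref{dualgen} and the integration-by-parts identity $\int\mathscr{A}_xe^{-xz}S_V(\ddr x)=zE(0)$, and run optional stopping; in the divergent case your hard truncation $g_\epsilon$ plays exactly the role that the soft regularisation by $e^{\theta\int_{x_0}^x 1/\Sigma}$ plays in the paper's $\Phi_\theta$. Your route is more self-contained and makes transparent \emph{why} $S_Z$ is the right object (it is the Laplace transform of a function annihilated by $\mathscr{A}^*$), at the cost of some technical overhead the paper avoids by quoting the CBM Laplace transform.

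Three places deserve a little more care. First, the interchange ``$\mathscr{L}_z\int=\int\mathscr{L}_z$'' should be justified at the level of the It\^o/martingale structure for $Y$ (Theorem~\ref{theorem:cbc-sde-contruction} only gives the local martingale property for $C^2_b$ test functions, and $S_Z$ is unbounded near $z^*$); the cleanest fix is to apply It\^o directly to $S_Z(Y^{T^-_\mathsf{a}})$, noting $S_Z$ is smooth and bounded on $[\mathsf{a},\infty)$ and that the jump integrand $S_Z(y+h)-S_Z(y)$ is bounded there. Second, your decomposition at $T^-_\mathsf{a}\wedge T^+_N$ tacitly uses $T^-_\mathsf{a}\wedge T^+_N<\infty$ a.s.; this is true for the CBM since $\Sigma>0$ on $(0,\infty)$, but you should either cite it or instead stop at $u\wedge T^-_\mathsf{a}\wedge T^+_N$ and pass $u\to\infty$ by bounded convergence. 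Third, the final step ``$g_\epsilon(z)/g_\epsilon(\mathsf{a})\to 1$ because $e^{-x(z-\mathsf{a})}\to 1$'' is right in spirit but needs a $\delta$-window argument: split $\int_\epsilon^\infty=\int_\epsilon^\delta+\int_\delta^\infty$, use $e^{-xz}\ge e^{-\delta(z-\mathsf{a})}e^{-x\mathsf{a}}$ on $(\epsilon,\delta)$, observe the $\int_\delta^\infty$ pieces stay bounded while $S_V(\epsilon,\delta]\to\infty$, and then send $\delta\downarrow 0$. Amusingly, this recovers exactly the paper's intermediate bound $\mathbb{P}_z(\sigma_\mathsf{a}<\infty)\ge e^{-zx_0}$ before letting $x_0\downarrow 0$.
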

\begin{proof}
Set $\sigma_\mathsf{a}:=\inf\{t\in [0,\zeta): Y_t\leq \mathsf{a}\}$. By \cite[Theorem~3.1]{vidmar2021continuousstate} for the non-subordinator case, respectively by \cite[Theorem~1]{Duhalde} for the subordinator case, we have for any $\theta\in (0,\infty)$,
\begin{equation}\label{LTsigmaa}\mathbb{E}_z[e^{-\theta \sigma_\mathsf{a}}]=\frac{\Phi_{\theta}(z)}{\Phi_{\theta}(\mathsf{a})},
\end{equation}
where \[\Phi_{\theta}(z):=\int_{0}^{\infty}\frac{\ddr x}{\Sigma(x)}e^{-zx-\int_{x_0}^{x}\frac{\Psi(u)-\theta}{\Sigma(u)}\ddr u}=\int_0^{\infty}S_V(\ddr x)e^{-zx+\int_{x_0}^{x}\frac{\theta}{\Sigma(u)}\ddr u}.\]
Therefore, by the time-change representation:
\begin{equation}\label{exitbya} \mathbb{P}_z(\zeta^-_\mathsf{a}<\zeta_\infty)=\mathbb{P}_z(\sigma_\mathsf{a}<\infty)=\underset{\theta \rightarrow 0+}{\lim}\frac{\Phi_{\theta}(z)}{\Phi_{\theta}(\mathsf{a})}=\underset{\theta \rightarrow 0+}{\lim}\frac{\int_{0}^{\infty}\frac{\ddr x}{\Sigma(x)} e^{-zx-\int_{x_0}^{x}\frac{\Psi(u)-\theta}{\Sigma(u)}\ddr u}}{\int_{0}^{\infty}\frac{\ddr x}{\Sigma(x)} e^{-\mathsf{a}x-\int_{x_0}^{x}\frac{\Psi(u)-\theta}{\Sigma(u)}\ddr u}}.
\end{equation}
Assume first $S_V(0,x_0]=\int_{0}^{x_0}\frac{\ddr x}{\Sigma(x)}e^{-\int_{x_0}^{x}\frac{\Psi(u)}{\Sigma(u)}\ddr u}<\infty$. Then, since $z>\mathsf{a}>z^*$,
\[\int_0^{\infty}e^{-zx} S_V(\ddr x)<\int_0^{\infty}e^{-\mathsf{a}x} S_V(\ddr x)<\infty.\]
Moreover, splitting the integrals in \eqref{exitbya} in two pieces, according to the domains $(0,x_0]$ and $(x_0,\infty)$, and applying monotone convergence  on $(0,x_0]$ and dominated convergence on $(x_0,\infty)$, we get the convergence as $\theta$ goes to $0$ of the right-hand side of \eqref{exitbya} and obtain
\begin{equation}\label{convh_theta}
\mathbb{P}_z(\zeta^-_\mathsf{a}<\zeta_\infty)=\frac{\int_{0}^{\infty}\frac{\ddr x}{\Sigma(x)} e^{-zx-\int_{x_0}^{x}\frac{\Psi(u)}{\Sigma(u)}\ddr u}}{\int_{0}^{\infty}\frac{\ddr x}{\Sigma(x)} e^{-\mathsf{a}x-\int_{x_0}^{x}\frac{\Psi(u)}{\Sigma(u)}\ddr u}}=\frac{S_Z(z)}{S_Z(\mathsf{a})}\in (0,1).
\end{equation}

\noindent Assume now $S_V(0,x_0]=\infty$. Then we see from \eqref{exitbya} that 
\begin{align*}
\mathbb{P}_z(\zeta^-_\mathsf{a}<\zeta_\infty)&\geq \underset{\theta \rightarrow 0+}{\lim} \frac{\int_0^{x_0}e^{-zx}\frac{1}{\Sigma(x)}e^{-\int_{x_0}^{x}\frac{\Psi(u)-\theta}{\Sigma(u)}\ddr u}\ddr x}{\int_0^{x_0}e^{-\mathsf{a}x}\frac{1}{\Sigma(x)}e^{-\int_{x_0}^{x}\frac{\Psi(u)-\theta}{\Sigma(u)}\ddr u}\ddr x+\int_{x_0}^{\infty} e^{-\mathsf{a}x}\frac{1}{\Sigma(x)}e^{-\int_{x_0}^{x}\frac{\Psi(u)-\theta}{\Sigma(u)}\ddr u}\ddr x}\\
&\geq \underset{\theta \rightarrow 0+}{\lim} \frac{e^{-zx_0}\int_0^{x_0}\frac{1}{\Sigma(x)}e^{-\int_{x_0}^{x}\frac{\Psi(u)-\theta}{\Sigma(u)}\ddr u}\ddr x}{\int_0^{x_0}\frac{1}{\Sigma(x)}e^{-\int_{x_0}^{x}\frac{\Psi(u)-\theta}{\Sigma(u)}\ddr u}\ddr x+\int_{x_0}^{\infty} e^{-\mathsf{a}x}\frac{1}{\Sigma(x)}e^{-\int_{x_0}^{x}\frac{\Psi(u)-\theta}{\Sigma(u)}\ddr u}\ddr x}\\
&=\underset{\theta \rightarrow 0+}{\lim}\frac{e^{-zx_0}}{1+\int_{x_0}^{\infty} e^{-\mathsf{a}x}\frac{1}{\Sigma(x)}e^{-\int_{x_0}^{x}\frac{\Psi(u)-\theta}{\Sigma(u)}\ddr u}\ddr x/\int_0^{x_0}\frac{1}{\Sigma(x)}e^{-\int_{x_0}^{x}\frac{\Psi(u)-\theta}{\Sigma(u)}\ddr u}\ddr x}.
\end{align*}
By monotone convergence $$\int_0^{x_0} \frac{1}{\Sigma(x)}e^{-\int_{x_0}^{x}\frac{\Psi(u)-\theta}{\Sigma(u)}\ddr u}\ddr x \underset{\theta \rightarrow 0+}{\longrightarrow} S_V(0,x_0]=\infty,$$ while by dominated convergence
$$\int_{x_0}^{\infty} e^{-\mathsf{a}x}\frac{1}{\Sigma(x)}e^{-\int_{x_0}^{x}\frac{\Psi(u)-\theta}{\Sigma(u)}\ddr u}\ddr x \underset{\theta \rightarrow 0+}{\longrightarrow}\int_{x_0}^{\infty} e^{-\mathsf{a}x}\frac{1}{\Sigma(x)}e^{-\int_{x_0}^{x}\frac{\Psi(u)}{\Sigma(u)}\ddr u}\ddr x <\infty;$$
and we conclude that 
$\mathbb{P}_z(\zeta^-_\mathsf{a}<\zeta_\infty)\geq e^{-zx_0}$. Since $x_0$ can be chosen arbitrarily small, we get finally $\mathbb{P}_z(\zeta^-_\mathsf{a}<\zeta_\infty)=1$.
\end{proof}

\begin{lemma}\label{lemmacvii} Let $z\in (z^*,\infty)$. If $\Psi\ne 0$, then the following equivalences hold:
$Z_t\underset{t\rightarrow \infty}{\longrightarrow} 0$ with positive $\PP_z$-probability (respectively $\PP_z$-almost surely) if and only if $S_V(x_0,\infty)<\infty$ (respectively $S_V(x_0,\infty)<\infty$ and $S_V(0,x_0]=\infty$); when $S_V(0,\infty)<\infty$, then, moreover, $\mathbb{P}_z(Z_t\underset{t\rightarrow \infty}{\longrightarrow} 0)=\frac{S_Z(z)}{S_Z(0)}\in (0,1)$, where $S_Z$ is as in \eqref{eq:SZ-def}.  If $\Psi= 0$, then $\PP_z$-almost surely $Z_t\underset{t\rightarrow \infty}{\longrightarrow} 0$.
\end{lemma}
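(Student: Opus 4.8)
The plan is to transfer the question to the non-explosive CBM $Y$ supplied by Theorem~\ref{thm:time-change} and to read off the long-term behaviour of $Z$ from that of $Y$ and of the scale function $S_V$. First I would record that the time-change $\omega$ is an increasing continuous bijection of $[0,\zeta_\infty)$ onto $[0,\infty)$, so that $Z_t=Y_{\omega(t)}$ with $\omega(t)\uparrow\infty$ as $t\uparrow\zeta_\infty$; since $\{Y_u\to 0\}\subseteq\{\zeta_\infty=\int_0^\infty \mathrm{d}u/Y_u=\infty\}$ and, conversely, $\{Z_t\to0\}$ forces non-explosion, the events $\{Z_t\to0\}$ and $\{Y_u\to0\}$ coincide up to a null set. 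Two degenerate situations are dispatched at once: if $z^*>0$ then $Z_t>z^*$ for all $t$ by Proposition~\ref{propositionstatespace} (so $\mathbb{P}_z(Z_t\to0)=0$), and one checks from \eqref{scalemeasure} that $z^*>0$ entails $S_V(x_0,\infty)=\infty$, so both sides of the equivalence fail; and if $\Psi=0$ then $Y$ is a $(\text{sub})$critical $\mathrm{CB}(\Sigma)$, whence $Y_u\to0$ and therefore $Z_t\to0$ a.s. Henceforth I take $z^*=0$ and $\Psi\ne0$.

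In the regime $S_V(0,x_0]<\infty$ the function $S_Z$ of \eqref{eq:SZ-def} is finite on $(0,\infty)$ and is $\mathscr{L}$-harmonic, as one verifies via the Laplace duality \eqref{dualgen} using that the scale density $s:=\mathrm{d}S_V/\mathrm{d}x$ satisfies $(\Sigma s)'+\Psi s=0$; moreover $S_Z$ is continuous, strictly decreasing, with $S_Z(\infty)=0$. If $S_V(x_0,\infty)=\infty$ (so $S_Z(0+)=S_V(0,\infty)=\infty$), Lemma~\ref{lemmaexitprobai} gives $\mathbb{P}_z(\zeta_\mathsf{a}^-<\zeta_\infty)=S_Z(z)/S_Z(\mathsf{a})\to0$ as $\mathsf{a}\downarrow0$, whence $\mathbb{P}_z(\inf_{t<\zeta_\infty}Z_t=0)=0$ and a fortiori $\mathbb{P}_z(Z_t\to0)=0$. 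If instead $S_V(x_0,\infty)<\infty$ then $S_V$ is a finite measure, so $S_Z\in\mathcal{C}\subset\mathcal{D}$ (Example~\ref{example:in-domain-genZ}) and $S_Z(Z_t)$ is a bounded martingale by Corollary~\ref{corollaray:genZ}; it converges a.s., hence so does $Z_t$ in $[0,\infty]$, and—ruling out convergence to an interior point by quasi-left-continuity and the non-degeneracy $\Sigma>0$ on $(0,\infty)$—the limit lies in $\{0,\infty\}$. Optional stopping then yields $\mathbb{P}_z(Z_t\to0)=S_Z(z)/S_Z(0)\in(0,1)$, the announced formula (valid exactly when $S_V(0,\infty)<\infty$).

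It remains to treat the regime $S_V(0,x_0]=\infty$, where $Z$ does not explode (Proposition~\ref{propositionsuffcond}) and where Lemma~\ref{lemmaexitprobai} gives $\mathbb{P}_z(\zeta_\mathsf{a}^-<\zeta_\infty)=1$ for every $\mathsf{a}\in(0,z)$; applying this repeatedly through the strong Markov property shows $\liminf_{t\to\infty}Z_t=0$ a.s. Consequently $Z\not\to\infty$, and $Z$ converges if and only if $\limsup_tZ_t=0$, i.e.\ if and only if $Z_t\to0$; thus everything reduces to the dichotomy \emph{``$\limsup_tZ_t=0$ a.s.''} versus \emph{``$\limsup_tZ_t>0$ a.s.''}, which I claim is decided by $S_V(x_0,\infty)$. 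To organise this I would fix $0<\mathsf{a}<\mathsf{b}$ and, using that $Z$ creeps downward (no negative jumps) so that it hits $\mathsf{a}$ exactly, set up the renewal scheme of successive passages $\mathsf{a}\rightsquigarrow\mathsf{b}\rightsquigarrow\mathsf{a}$ (each return to $\mathsf{a}$ is a.s. finite by Lemma~\ref{lemmaexitprobai}); by the strong Markov property the number of completed up-crossings of $[\mathsf{a},\mathsf{b}]$ is geometric with parameter $\rho:=\mathbb{P}_\mathsf{a}(\zeta_\mathsf{b}^+<\infty)$, so $\limsup_tZ_t\le\mathsf{b}$ a.s.\ as soon as $\rho<1$. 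Letting $\mathsf{b}\downarrow0$ then gives $Z_t\to0$ a.s., whereas $\rho=1$ produces infinitely many up-crossings and $\limsup_tZ_t\ge\mathsf{b}>0$.

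The hard part is precisely the evaluation of $\rho$, equivalently the positivity of the escape probability $\mathbb{P}_\mathsf{a}(\sup_tZ_t<\mathsf{b})$, since in this regime the natural harmonic function $S_Z$ is identically $+\infty$ and the two-sided exit problem degenerates (only the constant harmonic function survives). I would resolve it through the upward first-passage of the CBM $Y$: either via the increasing $\theta$-invariant function $g_\theta(z)=z\int_0^\infty e^{-zv}\check h_\theta(v)\,\mathrm{d}v$ built from the decreasing solution $\check h_\theta$ of $\mathscr{G}\check h=\theta\check h$, used as a supermartingale $e^{-\theta t}g_\theta(Z_t)$ to bound $\rho$ away from $1$ exactly when $S_V(x_0,\infty)<\infty$; or, splitting on the form of $\Psi$, by noting that in the non-subordinator case $S_V(x_0,\infty)<\infty$ holds automatically (so only the convergent case occurs and the $(\text{sub})$critical branching of $Y$ drives it to $0$), whereas in the subordinator case $Y$ is a $\mathrm{CBI}$ with immigration mechanism $-\Psi$ stopped at $0$, whose accessibility/recurrence dichotomy in terms of $S_V$ is classical \cite{Duhalde}: the boundary $0$ is accessible (so $Y$, hence $Z$, is absorbed at and converges to $0$) precisely when $S_V(x_0,\infty)<\infty$, and $Y$ returns to high levels infinitely often otherwise. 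Combining the two regimes gives the stated equivalences and the formula, completing the proof.
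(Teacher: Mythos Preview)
Your route diverges substantially from the paper's. The paper reduces everything to a single key identity for the CBM $Y$: when $\Psi\ne 0$, one has $\{Y_t\to 0\}=\{\sigma_0<\infty\}$ a.s.\ (cited from \cite[Corollary~3.1]{vidmar2021continuousstate} for genuine CBMs and from \cite[Eq.~(18)]{Duhalde} for CBIs that are not CBs). Since $\{Z_t\to 0\}=\{Y_t\to 0\}$ via the time-change, the whole lemma becomes a question about $\mathbb{P}_z(\sigma_0<\infty)$, which the paper computes by letting $\mathsf{a}\downarrow 0$ in \eqref{LTsigmaa} to get $\mathbb{E}_z[e^{-\theta\sigma_0}]=\Phi_\theta(z)/\Phi_\theta(0)$, and then sending $\theta\downarrow 0$ with the same monotone/dominated convergence splitting as in Lemma~\ref{lemmaexitprobai}. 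This immediately yields: positive probability iff $\Phi_\theta(0)<\infty$ iff $S_V(x_0,\infty)<\infty$; almost sure iff additionally $S_V(0,x_0]=\infty$; and the formula $S_Z(z)/S_Z(0)$ when $S_V(0,\infty)<\infty$.

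Your proposal, by contrast, attempts a largely self-contained argument (harmonic function $S_Z$, martingale convergence, and a renewal/up-crossing scheme) and in doing so runs into real gaps. First, in the regime $S_V(0,\infty)<\infty$ you assert that the bounded martingale $S_Z(Z_t)$ converges and then ``rule out convergence to an interior point by quasi-left-continuity and the non-degeneracy $\Sigma>0$''. This step is not justified: $Z$ is a jump process, not a diffusion, and there is no off-the-shelf argument excluding an interior limit here (the paper obtains this dichotomy in Lemma~\ref{lemmacvi}, again by citing CBM/CBI transience results). There is also a technical wrinkle: $S_Z$ need not lie in $\mathcal{D}$, since $\mathcal{C}\subset\mathcal{D}$ implicitly requires moments of the representing measure that $S_V$ may lack (so $S_Z$ may fail to be $C^2$ at $0$); this is fixable by stopping at $\zeta_\epsilon^-$, but you do not say so. Second, and more seriously, in the regime $S_V(0,x_0]=\infty$ you correctly identify that everything hinges on $\rho=\mathbb{P}_{\mathsf{a}}(\zeta_{\mathsf{b}}^+<\infty)$ but do not actually compute it. Your ``first method'' via $g_\theta$ built from the decreasing solution $\check h_\theta$ would need (i) finiteness of $g_\theta$ (i.e.\ $\check h_\theta(0+)<\infty$, which fails when $0$ is natural for $V$), (ii) a proof that $g_\theta(\mathsf{a})/g_\theta(\mathsf{b})$ has a limit $<1$ as $\theta\downarrow 0$ precisely when $S_V(x_0,\infty)<\infty$, and (iii) a matching lower bound to get the converse; none of this is supplied. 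Your ``second method'' is not really an alternative: in the subordinator case you invoke exactly the CBI results from \cite{Duhalde} that the paper uses, and in the non-subordinator case the claim that ``the (sub)critical branching of $Y$ drives it to $0$'' is incorrect as stated, since $Y$ is a CBM with migration $\Psi$, not a pure CB($\Sigma$); one still needs the CBM first-passage result underlying \eqref{LTsigmaa}. In short, the paper's approach is both shorter and complete because it leans on the cited identity $\{Y_t\to 0\}=\{\sigma_0<\infty\}$ and the explicit $\Phi_\theta$, whereas your renewal/martingale detour leaves precisely the decisive step unresolved.
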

\begin{proof} 
We continue to use $\sigma_0$ to denote the first hitting time of $0$ by  the CBM $Y$ of Theorem \ref{thm:time-change}. 

If $\Psi= 0$, then $Y$ is a CB process with branching mechanism $\Sigma$. 
Since $\Sigma$ is (sub)critical, one then has $Y_t\underset{t\rightarrow \infty}{\longrightarrow} 0$ a.s.-$\PP_z$, always, see e.g. \cite[Theorem 12.7]{Kyprianoubook}, therefore $Z_t\underset{t\rightarrow \infty}{\longrightarrow} 0$ a.s.-$\PP_z$. 

Assume now $\Psi\ne 0$. 

CBMs and CBIs (that are not CBs) cannot converge towards $0$ without hitting it (said another way, they do not extinguish), i.e. we have that $\{Y_t\underset{t\rightarrow \infty}{\longrightarrow} 0\}=\{\sigma_0<\infty\}$ a.s.. For CBMs (that are not CBIs) this is noted in \cite[Corollary~3.1]{vidmar2021continuousstate}, for CBIs (that are not CBs), it follows at once from \cite[Eq.~(18)]{Duhalde} which states that such a process has infinite superior limit. 
One has therefore the following almost sure equality of events: $$\{Z_t\underset{t\rightarrow \infty}{\longrightarrow} 0\}=\{\sigma_0<\infty\}.$$ 

By letting $\mathsf{a}$ go to $0$ in \eqref{LTsigmaa} when $z^*=0$, trivially by Proposition~\ref{propositionstatespace} for the case when $z^*>0$, we see that 
\begin{equation}\label{eq:hitting-zero}
\mathbb{E}_z[e^{-\theta \sigma_0}]=\frac{\Phi_{\theta}(z)}{\Phi_{\theta}(0)},
\end{equation}
and $\Phi_{\theta}(0)=\int_{0}^{\infty}\frac{\ddr x}{\Sigma(x)}e^{-\int_{x_0}^{x}\frac{\Psi(u)-\theta}{\Sigma(u)}\ddr u}<\infty$ if and only if $S_V(x_0,\infty)=\int_{x_0}^{\infty}\frac{\ddr x}{\Sigma(x)}e^{-\int_{x_0}^{x}\frac{\Psi(u)}{\Sigma(u)}\ddr u}<\infty$ (note that $S_V(x_0,\infty)<\infty$ entails $\int_{x_0}^{\infty}\frac{\ddr u}{\Sigma(u)}<\infty$ which in turn ensures that $\Phi_\theta(0)<\infty$ when  $S_V(x_0,\infty)<\infty$).  Thus $\sigma_0<\infty$ with positive $\PP_z$-probability if and only if $S_V(x_0,\infty)<\infty$.  
%
This establishes the equivalence for convergence towards $0$ with positive probability. For the almost sure convergence,  we may and do assume $S_V(x_0,\infty)<\infty$. Then the same reasoning as in the proof of Lemma \ref{lemmaexitprobai} yields
\[\mathbb{P}_z(\sigma_0<\infty)\geq \underset{\theta \rightarrow 0+}{\lim}\frac{e^{-x_0z}}{1+\int_{x_0}^{\infty} \frac{1}{\Sigma(x)}e^{-\int_{x_0}^{x}\frac{\Psi(u)-\theta}{\Sigma(u)}\ddr u}\ddr x/\int_0^{x_0}\frac{1}{\Sigma(x)}e^{-\int_{x_0}^{x}\frac{\Psi(u)-\theta}{\Sigma(u)}\ddr u}\ddr x}.\]
If $S_V(0,x_0]=\infty$ the denominator above converges to $1$ as $\theta \to 0+$ and we have $\mathbb{P}_z(\sigma_0<\infty)\geq e^{-x_0z}$; since $x_0$ can be chosen arbitrarily small we get $\mathbb{P}_z(\sigma_0<\infty)=1$. Conversely, if $S_V(0,x_0]<\infty$, i.e. (together with  $S_V(x_0,\infty)<\infty$) $S_V(0,\infty)<\infty$, then by dominated convergence in \eqref{eq:hitting-zero} we obtain $\mathbb{P}_z(Z_t\underset{t\rightarrow \infty}{\longrightarrow} 0)=\frac{S_Z(z)}{S_Z(0)}$.
\end{proof}
\begin{lemma}\label{lemmacvi} 
Let $z\in (z^*,\infty)$. If $S_V(0,x_0]<\infty$, then $$\PP_z(\lim_{t\to\infty}Z_t=\infty)+\PP_z(\lim_{t\to\infty}Z_t=0)=1.$$ Furthermore,  $Z_t\underset{t\rightarrow \infty}{\longrightarrow} \infty \text{ with positive $\PP_z$-probability}$ (respectively, $\PP_z$-almost surely) if and only if $S_V(0,x_0]<\infty$ (respectively, $S_V(0,x_0]<\infty$ and $S_V(x_0,\infty)=\infty$).
\end{lemma}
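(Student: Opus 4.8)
The plan is to push everything through the time-change of Theorem~\ref{thm:time-change} and to control the long-run behaviour of the CBM $Y$ via its scale function $S_Z$. First I would set up the dictionary between the tails of $Z$ and of $Y$. Since $Z_t=Y_{\omega(t)}$ for $t\in[0,\zeta_\infty)$ with $\omega(t)\uparrow\infty$ as $t\uparrow\zeta_\infty$, while $Z\equiv\infty$ on $[\zeta_\infty,\infty)$, one obtains the a.s.\ identities $\{Z_t\to\infty\}=\{Y_u\to\infty\text{ as }u\to\infty\}$ (this event absorbs both explosion and transience of $Z$) and, as already recorded in the proof of Lemma~\ref{lemmacvii}, $\{Z_t\to 0\}=\{\sigma_0<\infty\}$. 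Hence the first assertion is equivalent to saying that the non-explosive CBM $Y$ converges a.s.\ either to $0$ or to $\infty$. I would also note at the outset that $S_V(0,x_0]<\infty$ forces $z^*=0$ (in the only regime where $z^*>0$, namely a finite-variation $\Sigma$ in the subordinator case, $\int_{0+}\mathrm d x/\Sigma(x)=\infty$ together with $\Psi/\Sigma$ integrable at $0$ gives $S_V(0,x_0]=\infty$), so the lower endpoint to be reached is genuinely $0$.

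Next I would show that $S_Z$ is $\mathscr L$-harmonic. Because $S_V(0,x_0]<\infty$, the function $S_Z$ of \eqref{eq:SZ-def} is finite, continuous and strictly decreasing on $(0,\infty)$, with $S_Z(\infty)=0$ and $S_Z(0+)=S_V(0,\infty)\in(0,\infty]$. Writing $S_Z(z)=\int_0^\infty e^{-xz}S_V(\mathrm d x)$ and using the generator-level Laplace duality \eqref{dualgen}, an integration by parts against $S_V(\mathrm d x)=\Sigma(x)^{-1}e^{\int_x^{x_0}\Psi/\Sigma}\mathrm d x$ collapses $\mathscr L_zS_Z(z)=\int_0^\infty\mathscr A_xe^{-xz}S_V(\mathrm d x)$ to the single boundary term $[-z\,e^{-xz}e^{\int_x^{x_0}\Psi(u)/\Sigma(u)\,\mathrm d u}]_{x=0}^{x=\infty}$; the contribution at $\infty$ vanishes as $z>z^*=0$, and the contribution at $0$ vanishes precisely because $S_V(0,x_0]<\infty$ entails $\int_0^{x_0}\Psi/\Sigma=-\infty$ (cf.\ Remark~\ref{H(0)finite}). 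Thus $\mathscr L S_Z=0$ on $(0,\infty)$, and the martingale statement of Theorem~\ref{theorem:cbc-sde-contruction} (applied to a $C^2_b$ function agreeing with $S_Z$ on compacts) shows $S_Z(Z_{t\wedge\zeta_\mathsf{a}^-\wedge\zeta_n^+})$ to be a local martingale; after the time-change, letting $\mathsf{a}\downarrow 0$ and $n\uparrow\infty$, $(S_Z(Y_{u\wedge\sigma_0}))_{u\ge0}$ is a nonnegative local martingale.

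The dichotomy then follows from martingale convergence together with the exclusion of interior limit points. Being a nonnegative local martingale, $S_Z(Y_{u\wedge\sigma_0})$ is a supermartingale, hence converges a.s.\ to a finite $\ell\ge0$; since $S_Z$ is a homeomorphism of $(0,\infty)$ onto $(0,S_Z(0+))$ with $S_Z(\infty)=0$, it follows that $Y_{u\wedge\sigma_0}$ itself converges a.s.\ to $S_Z^{-1}(\ell)\in[0,\infty]$. To rule out a limit in $(0,\infty)$ I would fix $\mathsf{a}'>0$ and apply Lévy's $0$–$1$ law to $N_u:=\PP(\sigma_{\mathsf{a}'}=\infty\mid\FF_u)$, which by the Markov property and the passage identity of Lemma~\ref{lemmaexitprobai} equals $\mathbbm 1_{\{\sigma_{\mathsf{a}'}>u\}}\bigl(1-S_Z(Y_u)/S_Z(\mathsf{a}')\bigr)$: on $\{\sigma_{\mathsf{a}'}=\infty\}$ one has $N_u\to1$, whence $S_Z(Y_u)\to0$ and $Y_u\to\infty$. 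Letting $\mathsf{a}'\downarrow0$, either $\sigma_{\mathsf{a}'}=\infty$ for some $\mathsf{a}'$ (and then $Y_u\to\infty$), or $\sigma_{\mathsf{a}'}<\infty$ for all $\mathsf{a}'$, so that $\inf_uY_u=0$ and, $Y$ being convergent, $Y_u\to0$ (equivalently $\sigma_0<\infty$). This yields $\PP_z(Z_t\to0)+\PP_z(Z_t\to\infty)=1$.

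It remains to assemble the equivalences, and to flag the hard step. When $S_V(0,x_0]<\infty$, Lemma~\ref{lemmaexitprobai} gives $\PP_z(\zeta_\mathsf{a}^-<\zeta_\infty)=S_Z(z)/S_Z(\mathsf{a})<1$, and on the complementary event $Z_t\to\infty$; hence $Z_t\to\infty$ with positive probability. Conversely, if $S_V(0,x_0]=\infty$, then Lemma~\ref{lemmaexitprobai} yields $\PP_z(\zeta_\mathsf{a}^-<\zeta_\infty)=1$ for every $\mathsf{a}\in(z^*,z)$; applying this repeatedly along a sequence $\mathsf{a}_k\downarrow z^*$ via the strong Markov property forces $\liminf_{t}Z_t\le z^*<\infty$ a.s., so $\PP_z(Z_t\to\infty)=0$. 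For the almost-sure statement, combine the dichotomy just proved with Lemma~\ref{lemmacvii}: under $S_V(0,x_0]<\infty$, $\PP_z(Z_t\to\infty)=1$ iff $\PP_z(Z_t\to0)=0$ iff $S_V(x_0,\infty)=\infty$. The main obstacle is the second half of the dichotomy, namely upgrading convergence of the scale-transformed supermartingale to convergence of $Y$ to the boundary $\{0,\infty\}$ by excluding interior limit points; tied to it is the verification that $\mathscr L S_Z=0$, where the whole force of the hypothesis $S_V(0,x_0]<\infty$ is concentrated in the vanishing of the boundary term at $0$.
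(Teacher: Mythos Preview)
Your approach is genuinely different from the paper's. The paper disposes of the first assertion in two lines by invoking structural results from the CBM/CBI literature: in the non-subordinator case it cites \cite[Corollary~3.1]{vidmar2021continuousstate}, which already asserts that $\lim_{s\to\infty}Y_s=\infty$ a.s.\ on $\{\sigma_0=\infty\}$, while in the subordinator case it cites \cite[Theorem~3(a)]{Duhalde}, which gives that under $S_V(0,x_0]<\infty$ the underlying unstopped CBI is a.s.\ transient. The equivalences are then assembled exactly as you do, via Lemmas~\ref{lemmaexitprobai} and~\ref{lemmacvii}. Your route --- harmonicity of $S_Z$, supermartingale convergence of $S_Z(Y)$, and L\'evy's $0$--$1$ law applied to $\mathbb{P}(\sigma_{\mathsf{a}'}=\infty\mid\FF_u)$ to exclude interior limit points --- is self-contained and in effect reproves the cited dichotomy for this specific class of CBMs. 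That is a real gain if one wants to avoid external dependencies; the cost is the technical overhead you yourself flag.

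One step deserves a closer look. Your claim that $S_V(0,x_0]<\infty$ forces $z^*=0$ is asserted on the grounds that ``finite-variation $\Sigma$ in the subordinator case'' makes $\Psi/\Sigma$ integrable at $0$. This implication is not justified: finite variation of $\Sigma$ constrains its behaviour at infinity (namely $a=0$ and $\int h\,\eta(\mathrm{d}h)<\infty$), not at $0$, and a subordinator $-\Psi$ with $-\Psi'(0+)=\infty$ can make $\int_{0+}\Psi/\Sigma$ diverge. If $z^*>0$ and $S_V(0,x_0]<\infty$ were to coexist, your dichotomy would only yield $Y\to\infty$ or $Y\to z^*$, and you would still owe an argument excluding the latter (this is precisely where the paper leans on \cite{Duhalde}). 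A cleaner patch, staying within your framework, is to observe that $\mathbb{P}_z(\sigma_{\mathsf{a}'}<\infty)=S_Z(z)/S_Z(\mathsf{a}')\to 0$ as $\mathsf{a}'\downarrow z^*$ provided $S_Z(z^*+)=\infty$, which then forces $Y\to\infty$ a.s.; but this finiteness question is itself nontrivial and should be argued. A secondary technicality: when $S_V(x_0,\infty)=\infty$ one has $S_Z(0+)=\infty$, so $S_Z(Y_{u\wedge\sigma_0})$ is not literally a well-defined nonnegative process past $\sigma_0$; work with $S_Z(Y_{u\wedge\sigma_{\mathsf{a}}})$ for $\mathsf{a}>0$ and pass to the limit only after establishing convergence.
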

\begin{proof}
In the non-subordinator case the first claim is immediate by time-change and the fact that for the CBM $Y$, $\lim_{s\to\infty}Y_s=\infty$ a.s. on $\{\sigma_0=\infty\}$ \cite[Corollary~3.1]{vidmar2021continuousstate}. In the subordinator case the condition  $S_V(0,x_0]<\infty$ ensures that the underlying unstopped [not stopped on hitting $0$] CBI process is almost surely transient (i.e. tends to $\infty$) \cite[Theorem~3(a)]{Duhalde} and again the first claim follows by time-change.

As for the second statement, we treat the non-respective case first. When the process escapes to $\infty$ with positive $\PP_z$-probability, the $\PP_z$-probability of staying above level $\mathsf{a}$ is positive for some $\mathsf{a}\in (z^*,\infty)$ and $S_V(0,x_0]$ has to be finite, since otherwise by Lemma~\ref{lemmaexitprobai} the latter probability would be zero. Conversely, if $S_V(0,x_0]<\infty$, then  $\PP_z(Z_t\underset{t\rightarrow \infty}{\longrightarrow} \infty)>0$ follows by combining the first statement with Lemma~\ref{lemmacvii} (and the fact that  $S_V(0,x_0]=\infty$ when $\Psi=0$). 

The respective case of the second statement follows easily from the non-respective one, the first statement and from Lemma~\ref{lemmacvii}.
\end{proof}

\noindent \textbf{Proof of Theorem \ref{attractiveboundaries}}: Statements \eqref{attractiveboundaries:i} and \eqref{attractiveboundaries:ii}  follow from Lemma~\ref{lemmaexitprobai}. Parts \eqref{attractiveboundaries:iii}-\eqref{attractiveboundaries:v} follow from Lemmas~\ref{lemmacvii}-\ref{lemmacvi}. \qed

\section{Study of non-explosion,  first passage times \& extinction}\label{sectionstudyfirstpassagetime}
\subsection{A sufficient condition for non-explosion: proof of Proposition \ref{propositionsuffcond}}
We know already that if $S_V(0,x_0]=\infty$ then $\infty$ is not attracting for $Z$ and therefore $Z$ does not explode, see Remark~\ref{H(0)finite}. In particular, non-explosion is guaranteed if $\Psi$ is (sub)critical, since then indeed $S_V(0,x_0]=\infty$. We finish the proof by establishing through a series of lemmas that when $\Psi'(0+)\in [-\infty,0)$ (supercritical $\Psi$) and $\int_{0+}\frac{\ddr x}{-\Psi(x)}=\infty$, then the \CBC cannot explode. 

The first lemma provides an increasing invariant function for supercritical CBs. We state it separately as it can be of independent interest for other generalisations of CBs. Call  $\mathscr{L}^{\mathrm{b}}$ the generator of the CB$(\Psi)$, viz. for  $f\in C^2_b([0,\infty))$, $$\mathscr{L}^{\mathrm{b}}f(z):=z\mathrm{L}^{\Psi}f(z),\quad z\in [0,\infty).$$ Assume $\Psi'(0+)\in [-\infty,0)$ and put $$\rho:=\sup\{x\in (0,\infty): \Psi(x)<0\}\in (0,\infty].$$ Pick $x_0\in (0,\rho)$ and let $\theta\in (0,\infty)$. Set \begin{equation}\label{eigenfunctionCB} \bar{f}_\theta(z):=
\int_0^{\rho}(1-e^{-xz})\frac{\theta}{-\Psi(x)}e^{\int_x^{x_0}\frac{\theta}{-\Psi(u)}\ddr u}\ddr x
\in [0,\infty],\quad z\in [0,\infty).
\end{equation}

\begin{lemma}[Increasing eigenfunction of CB$(\Psi)$] \label{lemma:increaisng-for-cb}
Assume $\theta\in (0,-\Psi'(0+))$. Then 
\begin{equation}\label{eigenfunctionCB-bis}
\bar{f}_\theta(z)=z\int_0^\rho e^{-zx}e^{\int_x^{x_0}\frac{\theta}{-\Psi(u)}\ddr u}\ddr x<\infty,\quad z\in [ 0,\infty);
\end{equation}
 furthermore,
\begin{enumerate}[(i)]
\item\label{claim:1} $\bar{f}_\theta$ is an increasing solution to $\mathscr{L}^{\mathrm{b}}\bar{f}_\theta=\theta \bar{f}_\theta$ and
\item\label{claim:2}  $\bar{f}_\theta$ is bounded if and only if $\int_{0+}\frac{\ddr u}{-\Psi(u)}<\infty$.
\end{enumerate}
\end{lemma}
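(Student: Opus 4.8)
The plan is to reduce everything to the single auxiliary function $G(x):=e^{\int_x^{x_0}\frac{\theta}{-\Psi(u)}\ddr u}$ on $(0,\rho)$, which is positive and strictly decreasing and satisfies $G'(x)=-\frac{\theta}{-\Psi(x)}G(x)$; thus the weight appearing in \eqref{eigenfunctionCB} is precisely $g(x):=\frac{\theta}{-\Psi(x)}G(x)=-G'(x)$. Two boundary computations will be used repeatedly. At the upper end $G(\rho-)=0$: when $\rho<\infty$ because $-\Psi$ vanishes linearly at its zero $\rho$ (convexity of $\Psi$ gives $\Psi'(\rho)>0$), so $\int^\rho\frac{\ddr u}{-\Psi(u)}=\infty$; when $\rho=\infty$ because concavity of $-\Psi$ forces at most linear growth, whence again $\int^\infty\frac{\ddr u}{-\Psi(u)}=\infty$. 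At the lower end $G(0+)=e^{\int_0^{x_0}\frac{\theta}{-\Psi(u)}\ddr u}$, which is finite exactly when $\int_{0+}\frac{\ddr u}{-\Psi(u)}<\infty$. Finally I would record the near-zero asymptotics: since $-\Psi(u)\sim -\Psi'(0+)u$ as $u\downarrow0$, one gets $G(x)\sim c\,x^{-\gamma}$ with $\gamma:=\theta/(-\Psi'(0+))$ (read $\gamma=0$ if $\Psi'(0+)=-\infty$), and it is exactly here that the hypothesis $\theta<-\Psi'(0+)$ enters, since it forces $\gamma\in[0,1)$ and so renders $G$, and hence $e^{-zx}G(x)$, integrable at $0$.

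With these facts, \eqref{eigenfunctionCB-bis} together with the finiteness of $\bar f_\theta$ follows by a single integration by parts: writing $\bar f_\theta(z)=\int_0^\rho(1-e^{-xz})(-G'(x))\ddr x$ and integrating by parts, the boundary term at $\rho$ vanishes because $G(\rho-)=0$, and the one at $0$ vanishes because $(1-e^{-xz})G(x)\sim cz\,x^{1-\gamma}\to0$ with $1-\gamma>0$; what is left is $z\int_0^\rho e^{-xz}G(x)\ddr x$, finite for every $z\in[0,\infty)$ by the near-zero estimate. Claim \eqref{claim:1} is partly immediate from \eqref{eigenfunctionCB}: differentiating under the integral sign gives $\bar f_\theta'(z)=\int_0^\rho xe^{-xz}g(x)\ddr x>0$ (so $\bar f_\theta$ is nondecreasing) and $\bar f_\theta''(z)=-\int_0^\rho x^2e^{-xz}g(x)\ddr x<0$ (so $\bar f_\theta$ is concave, a fact I keep for the generator estimate).

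The eigen-relation in \eqref{claim:1} rests on the elementary identity $\mathrm{L}^\Psi e^{-x\,\cdot}(z)=\Psi(x)e^{-xz}$ (direct from \eqref{genLevy} and \eqref{branchingmechanism}) together with $\mathrm{L}^\Psi 1=0$, giving $\mathrm{L}^\Psi(1-e^{-x\,\cdot})(z)=-\Psi(x)e^{-xz}$. Applying $\mathrm{L}^\Psi$ to $\bar f_\theta$ and moving it inside $\int_0^\rho\cdots\ddr x$ one obtains
\[
\mathrm{L}^\Psi\bar f_\theta(z)=\int_0^\rho(-\Psi(x))e^{-xz}g(x)\,\ddr x=\theta\int_0^\rho e^{-xz}G(x)\,\ddr x=\frac{\theta}{z}\bar f_\theta(z),
\]
that is $\mathscr{L}^{\mathrm{b}}\bar f_\theta=z\,\mathrm{L}^\Psi\bar f_\theta=\theta\bar f_\theta$. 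Justifying this interchange is the crux, and the one point needing genuine care, since $\bar f_\theta$ need not be bounded. I would prove absolute convergence of the resulting double integral against $\pi(\ddr h)\,\ddr x$ by splitting at $h=1$: for $h\le1$ using $|(1-e^{-xh})-xh|\le\tfrac12(xh)^2$ against $\int_{(0,1]}h^2\pi(\ddr h)<\infty$ and the finite quantity $-\bar f_\theta''(z)$; for $h>1$ using the concavity bound $\bar f_\theta(z+h)-\bar f_\theta(z)\le h\,\bar f_\theta'(z)$. Here the hypotheses dovetail nicely: finiteness of $\Psi'(0+)$ is equivalent to $\int_1^\infty h\,\pi(\ddr h)<\infty$, which controls the big jumps via the concavity bound; whereas if $\Psi'(0+)=-\infty$ then $\int_0^{x_0}\frac{\ddr u}{-\Psi(u)}<\infty$, so $\bar f_\theta\le G(0+)<\infty$ is bounded and the trivial bound $\int_1^\infty\pi(\ddr h)<\infty$ suffices. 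Fubini then legitimises the interchange.

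Claim \eqref{claim:2} I would read off \eqref{eigenfunctionCB}. As $\bar f_\theta$ is nondecreasing it is bounded iff $\lim_{z\to\infty}\bar f_\theta(z)<\infty$, and by monotone convergence
\[
\lim_{z\to\infty}\bar f_\theta(z)=\int_0^\rho g(x)\,\ddr x=\int_0^\rho(-G'(x))\,\ddr x=G(0+)-G(\rho-)=G(0+).
\]
Since $G(0+)=e^{\int_0^{x_0}\frac{\theta}{-\Psi(u)}\ddr u}$, this supremum is finite precisely when $\int_{0+}\frac{\ddr u}{-\Psi(u)}<\infty$, which is the assertion.
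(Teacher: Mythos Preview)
Your overall architecture is the same as the paper's: you identify $G(x)=e^{\int_x^{x_0}\frac{\theta}{-\Psi(u)}\ddr u}$, obtain \eqref{eigenfunctionCB-bis} by integrating by parts, prove the eigen-relation by pushing $\mathrm{L}^\Psi$ through the integral, and read off (ii) from $G(0+)$. Two points need repair.

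First, the asymptotic ``$G(x)\sim c\,x^{-\gamma}$'' is too strong: from $-\Psi(u)\sim(-\Psi'(0+))u$ one only gets, for any $\mathsf c\in(\theta,-\Psi'(0+))$ and $x_0$ small enough, the \emph{bound} $G(x)\le (x_0/x)^{\theta/\mathsf c}$ with $\theta/\mathsf c<1$ (this is exactly what the paper uses). In particular, when $\Psi'(0+)=-\infty$ you set $\gamma=0$ and implicitly claim $G(0+)<\infty$; but for $\pi(\ddr h)=h^{-2}\mathbbm{1}_{[1,\infty)}(h)\ddr h$, $\sigma=b=0$, one has $-\Psi(x)\sim x\log(1/x)$, so $G(x)\sim C(\log(1/x))^{\theta}\to\infty$. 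The integrability of $G$ at $0$ still holds, but your route to it does not.

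Second, and more seriously, your Fubini justification for the big jumps contains a false dichotomy. You assert that $\Psi'(0+)=-\infty$ forces $\int_{0+}\frac{\ddr u}{-\Psi(u)}<\infty$, hence $\bar f_\theta$ bounded; the same example above shows this implication fails (there $\int_{0+}\frac{\ddr u}{u\log(1/u)}=\infty$). In that regime neither your concavity bound (requires $\int_1^\infty h\,\pi(\ddr h)<\infty$) nor your boundedness argument is available. The clean fix is the one the paper tacitly uses: on $\{h>1\}$ the integrand $e^{-zx}(1-e^{-hx})g(x)$ is \emph{nonnegative}, so Tonelli applies outright, giving
\[
\int_1^\infty[\bar f_\theta(z+h)-\bar f_\theta(z)]\pi(\ddr h)=\int_0^\rho e^{-zx}g(x)\Big(\int_1^\infty(1-e^{-hx})\pi(\ddr h)\Big)\ddr x.
\]
Since $\int_1^\infty(1-e^{-hx})\pi(\ddr h)=-\Psi(x)+O(x)$ near $0$ and $g(x)(-\Psi(x))=\theta G(x)$, the right-hand side is dominated by $\theta\int_0^\rho e^{-zx}G(x)\ddr x + C\bar f_\theta'(z)<\infty$; no case split on $\Psi'(0+)$ is needed.
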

\begin{proof}
First we check that $\bar{f}_\theta(z)<\infty$ for all $z\in [ 0,\infty)$. Recall  that $1-e^{-zx}\leq (zx)\land 1$ for $x\in [0,\infty)$ and that $\frac{-\Psi(x)}{x}\underset{x\rightarrow 0}{\longrightarrow} -\Psi'(0+)\in (0,\infty]$. Let $\mathsf{c}\in (\theta,-\Psi(0+))$; there exists then a choice for $x_0$ (which we may vary to our convenience, changing $\bar{f}_\theta$ only by a multiplicative constant) close enough to $0$ such that for all $u\in (0, x_0]$, $\frac{-\Psi(u)}{u}\geq \mathsf{c}$, thus $\frac{u}{-\Psi(u)}\leq \frac{1}{\mathsf{c}}$ and 
\begin{align*}
\bar{f}_\theta(z)&\leq \theta\int_{0}^{x_0}\frac{xz}{-\Psi(x)}e^{\int_x^{x_0}\frac{\theta}{\mathsf{c}u}\ddr u}\dd x+\int_{x_0}^\rho \frac{\theta}{-\Psi(x)}e^{\int_x^{x_0}\frac{\theta}{-\Psi(u)}\ddr u}\ddr x\\
&\leq \theta \frac{z}{\mathsf{c}}\int_0^{x_0} \left(\frac{x_0}{x}\right)^{\theta/\mathsf{c}}\ddr x-e^{-\int_{x_0}^x \frac{\theta}{-\Psi(u)}\ddr u}\vert_{x=x_0}^{x=\rho}<\infty, \text{ since } \theta/\mathsf{c}<1. 
\end{align*}
\eqref{claim:1}. 
It is plain that $\bar{f}_\theta$ is increasing. Notice that  $\mathscr{L}^{\mathrm{b}}_z(1-e^{-zx})=-z\Psi(x)e^{-zx}$ for $x\in [0,\infty)$, $z\in [0,\infty)$. Differentiation under the integral sign and Tonelli's theorem, then integration by parts, yield 
\begin{align*}
\mathscr{L}^{\mathrm{b}}\bar{f}_\theta(z)&=\int_0^\rho z(-\Psi(x))e^{-zx}\frac{\theta}{-\Psi(x)}e^{\int_x^{x_0}\frac{\theta}{-\Psi(u)}\ddr u}\ddr x=\theta\int_0^\rho z e^{-zx}e^{\int_x^{x_0}\frac{\theta}{-\Psi(u)}\ddr u}\ddr x\\
&=\left(\theta (1-e^{-zx})e^{\int_x^{x_0}\frac{\theta}{-\Psi(u)}\ddr u}\right)\vert_{x=0}^\rho+\theta \bar{f}_\theta(z)=\theta \bar{f}_\theta(z),
\end{align*}
where the last equality uses again the estimates $1-e^{-zx}\leq (zx)\land 1$ and $e^{\int_x^{x_0}\frac{\theta}{-\Psi(u)}\ddr u}\leq \left(\frac{x_0}{x}\right)^{\theta/\mathsf{c}}$, so that $\lim_{x\downarrow 0}(1-e^{-zx})e^{\int_x^{x_0}\frac{\theta}{-\Psi(u)}\ddr u}=0$, but also the fact that $\Psi(x)$ behaves like a linear function vanishing at $\rho$ and with strictly positive slope around $\rho$ when $\rho <\infty$, respectively that  $-\Psi$ is bounded in linear growth when $\rho=\infty$, which renders $\lim_{x\uparrow \rho}(1-e^{-zx})e^{\int_x^{x_0}\frac{\theta}{-\Psi(u)}\ddr u}=0$. En route we have checked the equality in \eqref{eigenfunctionCB-bis}.

\eqref{claim:2}. Note that by definition, as $z$ goes to $\infty$, $\bar{f}_\theta(z)$ tends by monotone convergence to \[\int_0^\rho\frac{\theta}{-\Psi(x)}e^{\int_x^{x_0}\frac{\theta}{-\Psi(u)}\ddr u}\ddr x=-e^{-\int_{x_0}^x \frac{\theta}{-\Psi(u)}\ddr u}\vert_{x=0}^{x=\rho}=e^{\int_0^{x_0}\frac{\theta}{-\Psi(u)}\ddr u} \in (0,\infty].\] 
So $\bar{f}_\theta$ is bounded or unbounded according as to whether $\int_{0+}\frac{\ddr u}{-\Psi(u)}<\infty$ or not.
\end{proof}
We now return to CBCs. Recall $\mathscr{L}$ of \eqref{genZ}.
\begin{lemma}\label{LyapounovCBC} Assume $\theta\in (0,-\Psi'(0+))$. Then $\mathscr{L}\bar{f}_\theta\leq \theta \bar{f}_\theta$.
\end{lemma}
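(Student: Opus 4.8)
The plan is to exploit the fact that $\mathscr{L}$ differs from the CB$(\Psi)$ generator $\mathscr{L}^{\mathrm{b}}$ only through the collision term. Indeed, from \eqref{genZ} we may write $\mathscr{L}f(z)=z^2\mathrm{L}^\Sigma f(z)+z\mathrm{L}^\Psi f(z)=z^2\mathrm{L}^\Sigma f(z)+\mathscr{L}^{\mathrm{b}}f(z)$, so that, invoking the eigenrelation $\mathscr{L}^{\mathrm{b}}\bar f_\theta=\theta\bar f_\theta$ furnished by Lemma~\ref{lemma:increaisng-for-cb}\eqref{claim:1},
\[
\mathscr{L}\bar f_\theta(z)-\theta\bar f_\theta(z)=z^2\mathrm{L}^\Sigma\bar f_\theta(z),\quad z\in[0,\infty).
\]
Since $z^2\geq 0$, the entire claim reduces to establishing the single inequality $\mathrm{L}^\Sigma\bar f_\theta\leq 0$ on $[0,\infty)$.

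To obtain this, set $g(x):=\frac{\theta}{-\Psi(x)}e^{\int_x^{x_0}\frac{\theta}{-\Psi(u)}\ddr u}\geq 0$ for $x\in(0,\rho)$, so that by \eqref{eigenfunctionCB} one has $\bar f_\theta(z)=\int_0^\rho(1-e^{-xz})g(x)\ddr x$. Each exponential map is an eigenfunction of $\mathrm{L}^\Sigma$: from the key identity \eqref{dualgen} (equivalently, by a direct computation from \eqref{genLevy-2} and \eqref{collisionmechanism}) one reads off $\mathrm{L}^\Sigma e^{-x\cdot}(z)=\Sigma(x)e^{-xz}$, while $\mathrm{L}^\Sigma 1=0$; hence $\mathrm{L}^\Sigma(1-e^{-x\cdot})(z)=-\Sigma(x)e^{-xz}$ for every $x\in[0,\infty)$. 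Granting that $\mathrm{L}^\Sigma$ may be passed under the $\ddr x$-integral, this gives
\[
\mathrm{L}^\Sigma\bar f_\theta(z)=-\int_0^\rho \Sigma(x)e^{-xz}g(x)\ddr x\leq 0,
\]
because $\Sigma\geq 0$ on $[0,\infty)$ and $g\geq 0$, which is precisely what is required.

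The one genuinely technical point, and the step I expect to cost the most care, is the interchange of $\mathrm{L}^\Sigma$ with the integral over $x\in(0,\rho)$. Its diffusive and drift parts, involving $\bar f_\theta''$ and $\bar f_\theta'$, are handled by differentiation under the integral sign exactly as was already carried out for $\mathscr{L}^{\mathrm{b}}$ in the proof of Lemma~\ref{lemma:increaisng-for-cb}; the estimates $1-e^{-xz}\le(xz)\wedge 1$ and $e^{\int_x^{x_0}\frac{\theta}{-\Psi(u)}\ddr u}\le(x_0/x)^{\theta/\mathsf{c}}$ near $0$, with $\theta/\mathsf{c}<1$, secure the absolute integrability of $xg(x)$ and $x^2g(x)$ needed there. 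The new ingredient is the jump term $\int_0^\infty(\bar f_\theta(z+h)-\bar f_\theta(z)-h\bar f_\theta'(z))\eta(\ddr h)$, which I would control by Tonelli's/Fubini's theorem. Here it helps decisively that $\bar f_\theta$ is concave and increasing, being a superposition of the concave increasing maps $z\mapsto 1-e^{-xz}$: concavity yields $-h\bar f_\theta'(z)\le \bar f_\theta(z+h)-\bar f_\theta(z)-h\bar f_\theta'(z)\le 0$, and hence $\lvert\bar f_\theta(z+h)-\bar f_\theta(z)-h\bar f_\theta'(z)\rvert\le C(z)(h\wedge h^2)$, matching the integrability $\int_0^\infty(h\wedge h^2)\eta(\ddr h)<\infty$ imposed on $\eta$ in \eqref{collisionmechanism}. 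This domination both shows that $\mathrm{L}^\Sigma\bar f_\theta(z)$ is finite (so the asserted inequality is meaningful even when $\bar f_\theta$ is unbounded) and licenses the swap of the $\eta(\ddr h)$- and $g(x)\ddr x$-integrations, each inner integral evaluating to $-\Sigma(x)e^{-xz}g(x)$ via $\mathrm{L}^\Sigma(1-e^{-x\cdot})=-\Sigma(x)e^{-x\cdot}$. Assembling the three contributions produces the displayed formula for $\mathrm{L}^\Sigma\bar f_\theta$ and thereby the conclusion $\mathscr{L}\bar f_\theta\leq\theta\bar f_\theta$.
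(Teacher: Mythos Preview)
Your proof is correct and follows essentially the same route as the paper: decompose $\mathscr{L}=\mathscr{L}^{\mathrm{c}}+\mathscr{L}^{\mathrm{b}}$ with $\mathscr{L}^{\mathrm{c}}f(z)=z^{2}\mathrm{L}^{\Sigma}f(z)$, use the eigenrelation from Lemma~\ref{lemma:increaisng-for-cb}, and observe that $\mathscr{L}^{\mathrm{c}}\bar f_\theta(z)=-\int_0^\rho z^2\Sigma(x)e^{-xz}g(x)\,\ddr x\le 0$. In fact you supply more detail than the paper does on the legitimacy of passing $\mathrm{L}^\Sigma$ under the $\ddr x$-integral (in particular the concavity-based bound for the jump part), which is a welcome addition.
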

\begin{proof} Set $\mathscr{L}^{\mathrm{c}}f(z):=z^2\mathrm{L}^{\Sigma}f(z)$ for  $f\in C^2_b([0,\infty)$, $z\in [0,\infty)$, so that $\mathscr{L}=\mathscr{L}^{\mathrm{c}}+\mathscr{L}^{\mathrm{b}}$. For $z\in [0,\infty)$  we estimate
\begin{align*}
\mathscr{L}^\mathrm{c}\bar{f}_\theta(z)&=\int_0^{\rho}\mathscr{L}_z^c(1-e^{-zx})\frac{\theta}{-\Psi(x)}e^{\int_x^{x_0}\frac{\theta}{-\Psi(u)}\ddr u}\ddr x\\
&=\int_0^{\rho}(-z^2\Sigma(x)e^{-zx})\frac{\theta}{-\Psi(x)}e^{\int_x^{x_0}\frac{\theta}{-\Psi(u)}\ddr u}\ddr x\leq 0.
\end{align*}
Thus $\mathscr{L}\bar{f}_\theta=\mathscr{L}^\mathrm{c}\bar{f}_\theta+\mathscr{L}^\mathrm{b}\bar{f}_\theta\leq \mathscr{L}^\mathrm{b}\bar{f}_\theta=\theta\bar{f}_\theta$ on using Lemma~\ref{lemma:increaisng-for-cb}.
\end{proof}
The next lemma concludes the argument for Proposition \ref{propositionsuffcond}.
\begin{lemma}\label{nonexplosionCB:CBC} Assume $\Psi'(0+)\in [-\infty,0)$. If $\int_{0+}\frac{\ddr u}{-\Psi(u)}=\infty$  then the CBC$(\Sigma,\Psi)$ process $Z$ does not explode.
\end{lemma}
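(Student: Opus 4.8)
The plan is to use $\bar f_\theta$ as a Lyapunov function and turn the differential inequality of Lemma~\ref{LyapounovCBC} into a supermartingale estimate that forbids explosion. Fix $\theta\in(0,-\Psi'(0+))$ (a nonempty interval since $\Psi'(0+)\in[-\infty,0)$). By Lemma~\ref{lemma:increaisng-for-cb} the function $\bar f_\theta$ is nonnegative, increasing and concave, it satisfies $\mathscr{L}\bar f_\theta\le\theta\bar f_\theta$ by Lemma~\ref{LyapounovCBC}, and --- crucially, because $\int_{0+}\frac{\dd u}{-\Psi(u)}=\infty$ --- it is unbounded; being increasing, $\bar f_\theta(n)\to\infty$ as $n\to\infty$. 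Recalling \eqref{eq.upwards-def}, non-explosion means $\zeta_\infty=\lim_n\zeta_n^+=\infty$ a.s., and since $\{\zeta_\infty\le t\}=\bigcap_n\{\zeta_n^+\le t\}$ (a decreasing intersection) it suffices to show, for every fixed $t\in[0,\infty)$, that $\PP_z(\zeta_n^+\le t)\to0$ as $n\to\infty$.

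The heart of the matter is to establish the inequality
\[
\EE_z\big[\bar f_\theta(Z_{t\wedge\zeta_n^+})e^{-\theta(t\wedge\zeta_n^+)}\big]\le\bar f_\theta(z),\qquad z\le n,
\]
whose direct derivation from the martingale claim \eqref{cbc-construction:i:a} is blocked by the fact that $\bar f_\theta\notin C^2_b([0,\infty))$. To circumvent this I would truncate: for each $N\ge n+1$ choose $f_N\in C^2_b([0,\infty))$, increasing and concave, with $f_N=\bar f_\theta$ on $[0,N]$ and $f_N\le\bar f_\theta$ throughout (cap $\bar f_\theta$ off to a constant past $N$), arranged so that $f_N\uparrow\bar f_\theta$ pointwise. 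The key observation is that the pointwise inequality $\mathscr{L}f_N\le\theta f_N$ survives on $[0,n)$: for $z<n\le N-1$ the diffusion and drift parts of $\mathscr{L}f_N(z)$ and $\mathscr{L}\bar f_\theta(z)$ coincide and the small-jump ($h\le1$) contributions agree (since $z+h<N$), while the only discrepancy, coming from large jumps landing above $N$, contributes $\int_{(N-z,\infty)}(f_N(z+h)-\bar f_\theta(z+h))\,\pi(\dd h)\le0$ to $\mathrm{L}^\Psi$ and likewise, with $\eta$ in place of $\pi$, to $\mathrm{L}^\Sigma$, because $f_N\le\bar f_\theta$. Hence $\mathscr{L}f_N(z)\le\mathscr{L}\bar f_\theta(z)\le\theta\bar f_\theta(z)=\theta f_N(z)$ for $z<n$.

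With $f_N$ in hand I would apply \eqref{cbc-construction:i:a} with $\alpha=\theta$ and localization at $\zeta_n^+$. The resulting local martingale is bounded up to every fixed time (indeed $f_N$ is bounded, and for $s<\zeta_n^+$ one has $Z_s<n$, so $\mathscr{L}f_N(Z_s)$ is controlled on the compact $[0,n]$), hence a genuine martingale; taking expectations and discarding the nonpositive drift integral yields $\EE_z[f_N(Z_{t\wedge\zeta_n^+})e^{-\theta(t\wedge\zeta_n^+)}]\le f_N(z)=\bar f_\theta(z)$. Letting $N\to\infty$ and invoking monotone convergence gives the displayed inequality for $\bar f_\theta$. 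Finally, bounding from below on the event $\{\zeta_n^+\le t\}$, where $Z_{\zeta_n^+}\ge n$ and $e^{-\theta\zeta_n^+}\ge e^{-\theta t}$, one gets $\bar f_\theta(n)e^{-\theta t}\PP_z(\zeta_n^+\le t)\le\bar f_\theta(z)$, so that
\[
\PP_z(\zeta_n^+\le t)\le\frac{\bar f_\theta(z)\,e^{\theta t}}{\bar f_\theta(n)}\xrightarrow[n\to\infty]{}0,
\]
whence $\PP_z(\zeta_\infty\le t)=0$ for all $t$ and $Z$ does not explode.

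I expect the main obstacle to be precisely the truncation step --- transferring the Lyapunov inequality from the unbounded $\bar f_\theta$ to compactly controlled $C^2_b$ surrogates without the nonlocal jump terms spoiling the sign of $\mathscr{L}f_N-\theta f_N$. As indicated above, it is the combination of concavity of $\bar f_\theta$ with the one-sided bound $f_N\le\bar f_\theta$ that makes the large-jump error terms point in the favourable direction, so this obstacle is surmountable rather than fundamental.
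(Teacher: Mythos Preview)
Your proposal is correct and follows essentially the same route as the paper: truncate $\bar f_\theta$ to a $C^2_b$ minorant that agrees on an initial segment, use the pointwise inequality $f_N\le\bar f_\theta$ to push the nonlocal jump terms in the right direction and recover $\mathscr{L}f_N\le\theta f_N$ on $[0,n)$, convert this via \eqref{cbc-construction:i:a} into a supermartingale bound, then pass to the limit using unboundedness of $\bar f_\theta$. The only cosmetic difference is the endgame --- the paper bounds $\EE_z[e^{-\theta\zeta_r^+}]\le\bar f_\theta(z)/\bar f_\theta(r)$ and lets $r\to\infty$ to force $\EE_z[e^{-\theta\zeta_\infty}]=0$, whereas you bound $\PP_z(\zeta_n^+\le t)$ directly; these are equivalent. (Your side remark on concavity of $\bar f_\theta$ is true but not actually needed: the minorization $f_N\le\bar f_\theta$ alone suffices for the jump comparison, and the paper's construction of the truncation only uses that $\bar f_\theta$ is nonnegative $C^2$ with nonnegative derivative.)
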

\begin{proof}
Pick a  $\theta\in (0,-\Psi'(0+))$. Since $\int_{0+}\frac{\ddr x}{-\Psi(x)}=\infty$, we have  $\text{$\uparrow$-$\lim$}_{z\to\infty}\bar f_\theta(z)=\infty$. Fix also an $r\in (0,\infty)$. For $\mathsf{c}\in [r,\infty)$ let $\bar{f}^\mathsf{c}_\theta$ be any  nonnegative $C^2_b([0,\infty))$ function which agrees with $\bar f_\theta$ on $[0,\mathsf{c})$ and minorizes  $\bar f_\theta$  everywhere,  e.g. one such function is obtained by taking any nonnegative $C^1([0,\infty))$ map $h$ that agrees with ${\bar f_\theta}'$ on $[0,\mathsf{c})$, minorizes ${\bar f_\theta}'$ everywhere and which vanishes on a neighborhood of infinity (clearly it exists), and then putting $\bar{f}^\mathsf{c}_\theta(x):=\bar{f}_\theta(\mathsf{c})+\int_{\mathsf{c}}^x h(y)\dd y$, $x\in [\mathsf{c},\infty)$ (note that $\bar f_\theta$ itself is nonnegative  $C^2([0,\infty))$, which follows by differentiation under the integral sign in \eqref{eigenfunctionCB}). 
Since $\bar f_\theta\geq \bar f^\mathsf{c}_\theta$ for the first inequality, and by Lemma \ref{LyapounovCBC} for the second,
$$
\mathscr{L} \bar{f}^\mathsf{c}_\theta(z)\leq\mathscr{L}\bar{f}_\theta(z)\leq \theta \bar{f}_\theta(z)=\theta \bar{f}^\mathsf{c}_\theta(z),\quad z\in [0,r).
$$
 Taking into account that $\bar f_\theta^{\mathsf{c}}$ is bounded, it follows from the statement surrounding \eqref{cbc-construction:i:a} that the process $(e^{-\theta (t\wedge \zeta_r^+)}\bar f_\theta^{\mathsf{c}}(Z_{t\wedge \zeta_r^+}),t\geq 0)$ is a supermartingale; hence, for all $t\in [0,\infty)$,
\[\mathbb{E}_z[e^{-\theta (t\wedge \zeta_r^+)}\bar{f}_\theta^\mathsf{c}(Z_{t\wedge \zeta_r^+})]\leq \bar{f}_\theta^\mathsf{c}(z).\]
Let now $\mathsf{c}\to \infty$, consider the event $\{\zeta_r^+\leq t\}$ and recall that $\bar{f}_\theta$ is nondecreasing; one gets
\[\bar{f}_\theta(z)\geq \mathbb{E}_z[e^{-\theta (t\wedge \zeta_r^+)}\bar{f}_\theta(Z_{t\wedge \zeta_r^+})]\geq \mathbb{E}_z[e^{-\theta \zeta_r^+}\bar{f}_\theta(Z_{\zeta_r^+})\mathbbm{1}_{\{\zeta_r^+\leq t\}}]\geq \bar{f}_\theta(r)\mathbb{E}_z[e^{-\theta \zeta_r^+}\mathbbm{1}_{\{\zeta_r^+\leq t\}}].\]
Letting next $t\to \infty$ we get the bound
$\mathbb{E}_z[e^{-\theta \zeta_r^+}]\leq \frac{\bar{f}_\theta(z)}{\bar{f}_\theta(r)}$. Effecting finally the limit $r\to\infty$  yields
\[\mathbb{E}_z[e^{-\theta \zeta_\infty}]\leq \underset{r\rightarrow \infty}{\lim} \frac{\bar{f}_\theta(z)}{\bar{f}_\theta(r)}=0,\] 
which means that $\zeta_\infty=\infty$ a.s., as required.
\end{proof}
\subsection{A decreasing eigenfunction of $Z$: proof of Theorem \ref{firstpassagetimestheorem}}
The proof will again proceed in several steps. 
We start by linking  nondecreasing eigenfunctions of $\mathscr{G}$ to  decreasing ones for $\mathscr{L}$. Recall the form \eqref{genZ} of $\mathscr{L}$, the Laplace duality $\mathscr{L}_ze^{-vz}=z^{2}\Sigma(v)e^{-zv}+z\Psi(v)e^{-zv}=\mathscr{A}_ve^{-zv}$, and the action $\mathscr{G}h=\Sigma h''+(\Sigma'+\Psi)h'=(\Sigma h')'+\Psi h'$. Observe also that the equation $\mathscr{G}h=\theta h$ admits at least one strictly increasing solution $h:(0,\infty)\to (0,\infty)$, see e.g. Mandl \cite[\#3, Chapter II, page 28]{zbMATH03287297}.
\begin{lemma}[A decreasing eigenfunction]\label{proposition:h-to-f}
Let $\theta\in (0,\infty)$, and suppose $h_\theta\in C^2((0,\infty))$ is nonnegative, not zero, nondecreasing and satisfies $\mathscr{G} h_\theta=\theta h_\theta$ on $(0,\infty)$. Put 
\begin{equation}\label{eq:from h-to-f}
f_\theta(z):=z\int_0^\infty e^{-zv}h_\theta(v)\dd v=h_\theta(0+)+\int_0^\infty e^{-zv}h'_\theta(v)\dd v,\quad z\in (0,\infty).
\end{equation}
 Then $\mathscr{L} f_\theta=\theta f_\theta$ on the interior of $\{f_\theta<\infty\}$.
 \end{lemma}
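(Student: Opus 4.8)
The plan is to exploit the Laplace duality at the level of the generators, $\mathscr{L}_z e^{-zv}=\mathscr{A}_v e^{-zv}$, to transport the action of $\mathscr{L}$ on the Laplace transform $f_\theta$ onto the $v$-variable, where the eigen-relation $\mathscr{G}h_\theta=\theta h_\theta$ is available. Write $J$ for the interior of $\{f_\theta<\infty\}$; being (up to an additive constant) the Laplace transform of the nonnegative measure $h'_\theta(v)\,\dd v$, $f_\theta$ is finite exactly on a half-line $(z_\ast,\infty)$, on which it is smooth and all the Laplace integrals below converge absolutely. Here one uses the at-most-quadratic growth of $\Sigma$ and $\Psi$ together with the exponential weight: for $z\in J$ and $z'\in(z_\ast,z)$ one has $v^k e^{-zv}\le Ce^{-z'v}$, whence $\int_0^\infty \Sigma(v)e^{-zv}h'_\theta(v)\,\dd v$ and likewise $\int_0^\infty |\Psi(v)|e^{-zv}h'_\theta(v)\,\dd v$ are finite. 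Since $\mathscr{L}$ annihilates constants, using the representation $f_\theta(z)=h_\theta(0+)+\int_0^\infty e^{-zv}h'_\theta(v)\,\dd v$ and differentiating under the integral sign (licit on $J$, by Tonelli and the absolute convergence above), I get $\mathscr{L}f_\theta(z)=\int_0^\infty(\mathscr{L}_z e^{-zv})h'_\theta(v)\,\dd v=\int_0^\infty(\mathscr{A}_v e^{-zv})h'_\theta(v)\,\dd v$.

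The heart of the computation is an exact-derivative identity. I would set $W(v):=-e^{-zv}\big(z\Sigma(v)h'_\theta(v)+\theta h_\theta(v)\big)$. Differentiating $W$ and substituting $(\Sigma h'_\theta)'=\theta h_\theta-\Psi h'_\theta$ (which is just $\mathscr{G}h_\theta=\theta h_\theta$ rearranged) yields, after cancellation, $W'(v)=(\mathscr{A}_v e^{-zv})h'_\theta(v)-\theta e^{-zv}h'_\theta(v)$; note this uses only $h_\theta\in C^2$. Integrating over $(0,\infty)$ gives $\mathscr{L}f_\theta(z)-\theta\big(f_\theta(z)-h_\theta(0+)\big)=W(\infty)-W(0+)$, so it remains to evaluate the two boundary contributions. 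For $W(\infty)$: absolute integrability of $W'$ on $J$ guarantees that $\lim_{v\to\infty}W(v)$ exists; since $h_\theta(v)=O(e^{z'v})$ for $z'<z$ one has $e^{-zv}h_\theta(v)\to0$, whence $e^{-zv}\Sigma(v)h'_\theta(v)$ also converges, and being nonnegative and integrable its limit must be $0$, so $W(\infty)=0$. At $v=0$ one has $W(0+)=-\big(z\lim_{v\to0+}\Sigma(v)h'_\theta(v)+\theta h_\theta(0+)\big)$, so everything reduces to the single claim $\lim_{v\to0+}\Sigma(v)h'_\theta(v)=0$; granting it, $W(0+)=-\theta h_\theta(0+)$ and the displayed identity collapses to $\mathscr{L}f_\theta=\theta f_\theta$ on $J$.

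Establishing the vanishing of the flux $F(v):=\Sigma(v)h'_\theta(v)$ at $0$ is the main obstacle. First I would show $F$ has a finite (necessarily nonnegative, since $F\ge0$) limit at $0$: integrating $F'=\theta h_\theta-\Psi h'_\theta$ from $v$ to $x_0$ and letting $v\downarrow0$, the term $\int_0^{x_0}\theta h_\theta$ is finite (as $h_\theta$ is bounded on $(0,x_0]$, being nonnegative and nondecreasing) and $\int_0^{x_0}\Psi h'_\theta$ converges absolutely (as $\Psi$ is continuous, hence bounded, on the compact $[0,x_0]$, while $\int_0^{x_0}h'_\theta=h_\theta(x_0)-h_\theta(0+)<\infty$); hence $\ell:=\lim_{v\to0+}F(v)$ exists in $[0,\infty)$. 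Suppose for contradiction $\ell>0$. Then $h'_\theta(v)=F(v)/\Sigma(v)\ge (\ell/2)/\Sigma(v)$ for $v$ near $0$, and since $\Sigma'(0+)=c/2<\infty$ forces $\Sigma(v)\le Cv$ near $0$, one has $\int_{0+}\tfrac{\dd v}{\Sigma(v)}=\infty$; therefore $\int_0^{x_0}h'_\theta=\infty$, contradicting $h_\theta(x_0)-h_\theta(0+)<\infty$. Thus $\ell=0$, closing the argument. The only genuinely delicate point is this boundary analysis at $0$; the justifications of the interchanges and of $W(\infty)=0$ are comparatively routine, bought by restricting to the interior $J$.
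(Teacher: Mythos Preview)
Your argument is correct and follows essentially the same route as the paper: pass $\mathscr{L}$ through the Laplace integral via the duality $\mathscr{L}_z e^{-zv}=z^2\Sigma(v)e^{-zv}+z\Psi(v)e^{-zv}$, integrate by parts using $(\Sigma h_\theta')'+\Psi h_\theta'=\theta h_\theta$, and kill the two boundary terms. Your packaging via the antiderivative $W$ is exactly the paper's per-partes computation written in closed form; the paper arrives at
\[
\mathscr{L}f_\theta(z)=\theta f_\theta(z)+\lim_{\epsilon\downarrow 0}\Sigma(\epsilon)h_\theta'(\epsilon)ze^{-\epsilon z}-\lim_{n\to\infty}\Sigma(n)h_\theta'(n)ze^{-zn},
\]
which is your $W(0+)-W(\infty)$ identity after the $h_\theta(0+)$ bookkeeping. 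The boundary analysis at $0$ is identical (if $\ell:=\lim_{v\downarrow 0}\Sigma(v)h_\theta'(v)>0$ then $h_\theta'\geq \tfrac{\ell}{2}\Sigma^{-1}$ near $0$ and $\int_{0+}\Sigma^{-1}=\infty$ forces $h_\theta\equiv\infty$); your justification that $\ell$ exists, via $F'=\theta h_\theta-\Psi h_\theta'\in L^1(0,x_0)$, is a useful detail the paper leaves implicit. For the boundary at $\infty$ the paper uses a slightly different trick---the limit exists and must be zero ``since such $z$ can always be made a little smaller''---whereas you argue directly from integrability; both are fine.
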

\begin{proof}
The equality  in \eqref{eq:from h-to-f} follows by Tonelli. For $z$ from the interior of $\{f_\theta<\infty\}$,  we compute by differentiating under the integral sign and using Tonelli, then via per partes:
\label{page:bigdisplay}
 \begin{align*}
\mathscr{L} f_\theta(z)&=\mathscr{L}_z\int_0^\infty e^{-zv}h_\theta'(v)\dd v=\int_0^\infty \mathscr{L}_z e^{-zv}h_\theta'(v)\dd v=\int_0^\infty (z^2\Sigma(v) e^{-zv}+z\Psi(v)e^{-zv})h_\theta'(v)\dd v\\
&=\lim_{\epsilon\downarrow 0}\Sigma(\epsilon)h_\theta'(\epsilon)ze^{-\epsilon z}+ \lim_{n\to\infty} -\Sigma(n)h_\theta'(n)ze^{-zn}\\
&\quad +\int_{\epsilon}^n\left (\frac{\dd}{\dd v}\left(\Sigma(v)h_\theta'(v)\right)+\Psi(v)h_\theta'(v)\right)ze^{-zv}\dd v\\
&=\lim_{\epsilon\downarrow 0}\Sigma(\epsilon)h_\theta'(\epsilon)z e^{-\epsilon z}+ \lim_{n\to\infty} -\Sigma(n)h_\theta'(n)ze^{-zn}+\int_{\epsilon}^n \mathscr{G} h_\theta(v)ze^{-zv}\dd v\\
&=\lim_{\epsilon\downarrow 0}\Sigma(\epsilon)h_\theta'(\epsilon)ze^{-\epsilon z}+ \lim_{n\to\infty} -\Sigma(n)h_\theta'(n)ze^{-zn}+\theta \int_{\epsilon}^n  h_\theta(z)ze^{-zv}\dd v\\
&=\theta z \int_0^\infty  h_\theta(z)e^{-zv}\dd v+\lim_{\epsilon\downarrow 0}\Sigma(\epsilon)h_\theta'(\epsilon)ze^{-\epsilon z}+ \lim_{n\to\infty} -\Sigma(n)h_\theta'(n)ze^{-zn}\\
&=\theta f_\theta(z)+\lim_{\epsilon\downarrow 0}\Sigma(\epsilon)h_\theta'(\epsilon)ze^{-\epsilon z}+ \lim_{n\to\infty} -\Sigma(n)h_\theta'(n)ze^{-zn}.
\end{align*}
 In particular the limits $\lim_{n\to\infty} -\Sigma(n)h'_\theta(n)ze^{-zn}$ and $\lim_{\epsilon\downarrow 0}\Sigma(\epsilon)h'_\theta(\epsilon)ze^{-\epsilon z}$  both exist in $\mathbb{R}$ for all $z$ from the interior of $\{f_\theta<\infty\}$. The first limit must in fact be zero, since such $z$  can always be made a little smaller. As for the second limit, it is (modulo $z$) $\lim_{\epsilon\downarrow 0}\Sigma(\epsilon) h'_\theta(\epsilon)$.  Suppose per absurdum that this limit is not zero, hence, from $(0,\infty)$. Since $\int_{0+}\frac{1}{\Sigma(x)}\ddr x=\infty$, we see that $\int_{0+}h_\theta’(x)\ddr x$ diverges, thus $h_\theta(x)$ is infinite for all $x>0$,  which is a contradiction. \end{proof}
 
We now check that the function defined in \eqref{eq:from h-to-f} is finite on $(z^*,\infty)$, where we may recall from \eqref{eq:z-star} that $z^*=\left(\limsup_{u\to\infty} \frac{-\Psi(u)}{\Sigma(u)}\right)\vee 0<\infty$  and that by Proposition~\ref{propositionstatespace} actually $z^*=0$ except possibly in the subordinator case. 

\begin{lemma}\label{lemma:f-theta-finite}
For all  $\theta\in (0,\infty)$ the function $f_\theta$ in  \eqref{eq:from h-to-f} is finite on  $ (z^*,\infty)$.
\end{lemma}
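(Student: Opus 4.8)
The plan is to reduce the claim to an exponential growth bound on the increasing eigenfunction $h_\theta$ and to extract that bound from the equation $\mathscr{G}h_\theta=\theta h_\theta$ (i.e.\ \eqref{eigenfunctionG}) via a Riccati analysis. Being nondecreasing and not zero, $h_\theta$ is strictly positive on some interval $(v_0,\infty)$; set $q:=h_\theta'/h_\theta\ge 0$ there, a well-defined continuous (indeed $C^1$) function since $h_\theta\in C^2((0,\infty))$ is positive on $(v_0,\infty)$. I claim it suffices to prove $\limsup_{v\to\infty}q(v)\le z^*$, where $z^*=(\limsup_{u\to\infty}(-\Psi(u)/\Sigma(u)))\vee 0$ as in \eqref{eq:z-star}. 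Indeed, if this holds, then for any $z>z^*$ one may pick $\delta>0$ with $z^*+\delta<z$, whence $q\le z^*+\delta$ for all large $v$, so $h_\theta(v)\le C e^{(z^*+\delta)v}$; since $h_\theta$ is also bounded on compacts, the Laplace transform $f_\theta(z)=z\int_0^\infty e^{-zv}h_\theta(v)\dd v$ of \eqref{eq:from h-to-f} is then finite.

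Next I would turn $\Sigma h_\theta''+(\Sigma'+\Psi)h_\theta'=\theta h_\theta$ into an equation for $q$: dividing by $h_\theta$ and using $h_\theta''/h_\theta=q'+q^2$ yields the Riccati equation
\[
q'=R(v,q),\qquad R(v,q):=\frac{\theta}{\Sigma}-q^2-A\,q,\qquad A:=\frac{\Sigma'+\Psi}{\Sigma},
\]
on $(v_0,\infty)$. The idea is to show that for each $\epsilon>0$ the level $z^*+\epsilon$ becomes a barrier for $q$ at large times. Three elementary facts about the mechanisms drive this: $\Sigma>0$ with $\Sigma(v)\to\infty$ (so $\theta/\Sigma\to 0$); $\Sigma'\ge 0$ by convexity of $\Sigma$ (so $-A\le -\Psi/\Sigma$, giving $\limsup_v(-A(v))\le\limsup_v(-\Psi(v)/\Sigma(v))\le z^*$); and $-\Psi/\Sigma\le z^*+\epsilon$ eventually (so $A\ge -(z^*+\epsilon)$ eventually). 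From the first two, evaluating at $q=z^*+\epsilon$ gives $\limsup_{v\to\infty}R(v,z^*+\epsilon)\le (z^*+\epsilon)\bigl(z^*-(z^*+\epsilon)\bigr)=-\epsilon(z^*+\epsilon)<0$. From the third, the vertex $-A/2$ of the downward parabola $q\mapsto R(v,q)$ lies below $z^*+\epsilon$ for large $v$, so $R(v,\cdot)$ is nonincreasing on $[z^*+\epsilon,\infty)$; hence there is $V$ with $R(v,q)<0$ whenever $v\ge V$ and $q\ge z^*+\epsilon$.

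Finally I would run the barrier argument. If $q(v_1)<z^*+\epsilon$ for some $v_1\ge V$, then $q$ can never re-cross $z^*+\epsilon$ upward: at a first up-crossing point $q'$ would be $\ge 0$, whereas the inequality above forces $q'<0$ there. The one genuinely delicate point, and the main obstacle, is to rule out the alternative scenario in which $q(v)\ge z^*+\epsilon$ for \emph{all} $v\ge V$. In that regime monotonicity of $R(v,\cdot)$ together with $\limsup_v R(v,z^*+\epsilon)<0$ gives $q'(v)=R(v,q(v))\le R(v,z^*+\epsilon)\le -\delta<0$ for all large $v$ and some fixed $\delta>0$, forcing $q\to-\infty$ and contradicting $q\ge 0$. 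Hence $q\le z^*+\epsilon$ eventually, so $\limsup_{v\to\infty}q(v)\le z^*+\epsilon$, and letting $\epsilon\downarrow 0$ finishes the proof. I would keep an eye on the subordinator case, where $z^*>0$ is genuinely attained (with $\Sigma(u)/u\to D<\infty$, $-\Psi(u)/u\to\mu$ and $z^*=\mu/D$ by Proposition~\ref{propositionstatespace}), to ensure the barrier level $z^*+\epsilon$ is strictly positive, as the parabola-vertex estimate requires.
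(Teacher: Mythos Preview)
Your argument is correct and takes a genuinely different route from the paper. The paper linearizes by introducing $g:=\Sigma h_\theta'$, derives the inequality $g'\le A_{\mathsf c}\,g+\tfrac{\theta}{\Sigma(\mathsf c)}\int_{\mathsf c}^\cdot g+\theta h_\theta(\mathsf c)$ on $[\mathsf c,\infty)$ (with $A_{\mathsf c}=(\sup_{u\ge \mathsf c}(-\Psi(u)/\Sigma(u)))\vee 0$), and then compares with the solution $\tilde g$ of the corresponding second-order linear ODE with constant coefficients, whose characteristic roots are bounded in absolute value by $A_{\mathsf c}+\sqrt{\theta/\Sigma(\mathsf c)}$; choosing $\mathsf c$ large enough makes this $<z$. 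You instead nonlinearize via the logarithmic derivative $q=h_\theta'/h_\theta$, obtain the Riccati equation $q'=\theta/\Sigma-q^2-Aq$, and run a barrier argument on the downward parabola to conclude $\limsup_{v\to\infty}q(v)\le z^*$ directly. Your approach is more elementary (no auxiliary linear ODE, no characteristic-root computation) and yields the sharp exponential rate $z^*$ in one shot, whereas the paper's comparison argument is linear throughout and perhaps more transparently robust, but has to be rerun for each target $z>z^*$. Two minor remarks: your final caveat about $z^*+\epsilon>0$ is unnecessary since $\epsilon>0$ already guarantees it; and the passage from $\limsup_v R(v,z^*+\epsilon)\le -\epsilon(z^*+\epsilon)$ to ``$R(v,z^*+\epsilon)\le-\delta$ for all large $v$'' is exactly what the $\limsup$ means, so that step is fine.
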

\begin{proof}
Write $h:=h_\theta$ for short and consider $g:=\Sigma h'$. Let $z\in (z^*,\infty)$. Pick  a $\mathsf{c}\in(0,\infty)$ such that $A_\mathsf{c}+\sqrt{\frac{\theta}{\Sigma(\mathsf{c})}}<z$, where $A_\mathsf{c}:=(\sup_{u\in [c,\infty)}\frac{-\Psi(u)}{\Sigma(u)})\lor 0$. We have $g'=-\frac{\Psi}{\Sigma}g+\theta \int_\mathsf{c}^\cdot  \frac{g}{\Sigma}+\theta h(\mathsf{c})\leq A_\mathsf{c}g+\frac{\theta}{\Sigma(\mathsf{c})}\int_\mathsf{c}^\cdot g+\theta h(\mathsf{c})$ on $[\mathsf{c},\infty)$. Then let $\tilde g$ be the $C^2([\mathsf{c},\infty))$ solution to $\tilde g'= A_\mathsf{c}\tilde g+\frac{\theta}{\Sigma(\mathsf{c})}\int_\mathsf{c}^\cdot\tilde g+\theta h(\mathsf{c})$ with initial condition $\tilde g(\mathsf{c})=g(\mathsf{c})+1$; in other words, solution of the  second order o.d.e. with constant coefficients $\tilde g''=A_\mathsf{c}\tilde g'+\frac{\theta}{\Sigma(\mathsf{c})}\tilde g$, $\tilde g(\mathsf{c})=g(\mathsf{c})+1$, $\tilde g'(\mathsf{c})=A_\mathsf{c}\tilde g(\mathsf{c})+\theta h(\mathsf{c})$. The function $\tilde g$ is a linear combination of (at most) two exponentials with absolute rate $\leq A_\mathsf{c}+\sqrt{\frac{\theta}{\Sigma(\mathsf{c})}}<z$ (using the elementary estimate $\sqrt{\mathsf{a}^2+\mathsf{b}^2}\leq \mathsf{a}+\mathsf{b}$, $\{\mathsf{a},\mathsf{b}\}\subset [0,\infty)$, to get a bound on the roots of the characteristic polynomial). Furthermore $\zeta:=\tilde g-g$ satisfies $\zeta(\mathsf{c})=1$ and $\zeta'\geq A_\mathsf{c}\zeta+\frac{\theta}{\Sigma(\mathsf{c})}\int_\mathsf{c}^\cdot\zeta$; therefore $\zeta\geq 0$ (even $\geq 1$), i.e. $g\leq \tilde g$ throughout $[\mathsf{c},\infty)$. Consequently $h'=\frac{g}{\Sigma}\leq \Sigma(\mathsf{c})^{-1}g\leq \Sigma(\mathsf{c})^{-1}\tilde g$ on $[\mathsf{c},\infty)$. The derivative of $h'$ being bounded (up to a multiplicative constant) on a neighborhood of infinity with an  exponential of rate $<z$, the same is true of $h$ itself. The claim follows.
\end{proof}
Under the assumption of non-explosion the next lemma characterizes the Laplace transforms of the first-passage times via the maps $f_\theta$, $\theta\in (0,\infty)$.
\begin{lemma}\label{lemmaLT} Assume that the process $Z$ does not explode. Let $\theta\in (0,\infty)$ and let $f_\theta$ be defined as in \eqref{eq:from h-to-f}.  Then for $\mathsf{a}\leq z$ from $(z^*,\infty)$,
\begin{equation}\label{eq:L-T-of-hitting-time}
\mathbb{E}_z[e^{-\theta \zeta^-_\mathsf{a}}]=\frac{f_\theta(z)}{f_\theta(\mathsf{a})}.
\end{equation}
\end{lemma}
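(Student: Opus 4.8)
The plan is to exhibit $f_\theta$ as a $\theta$-invariant function for the generator $\mathscr{L}$ and to run a martingale / optional-stopping argument, leaning on two structural features of $Z$: it has no negative jumps (so it crosses the level $\mathsf{a}$ continuously from above) and, by hypothesis, it does not explode. Before starting I would collect what we know about $f_\theta$: it is finite on $(z^*,\infty)$ (Lemma~\ref{lemma:f-theta-finite}) and satisfies $\mathscr{L}f_\theta=\theta f_\theta$ there (Lemma~\ref{proposition:h-to-f} together with the fact that $(z^*,\infty)$ lies in the interior of $\{f_\theta<\infty\}$); moreover $f_\theta$ is smooth on $(z^*,\infty)$ (differentiate \eqref{eq:from h-to-f} under the integral sign), and from $f_\theta(z)=h_\theta(0+)+\int_0^\infty e^{-zv}h_\theta'(v)\dd v$ with $h_\theta'\ge 0$ it is nonincreasing, hence bounded on $[\mathsf{a},\infty)$, with $f_\theta(\mathsf{a})\in(0,\infty)$ (as $h_\theta\ge 0$ is not identically zero and $\mathsf{a}>0$).

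The main obstacle is the nonlocality of $\mathscr{L}$: its jump part integrates $f$ over $[z,\infty)$, so truncating $f_\theta$ at a high level would spoil the identity $\mathscr{L}f_\theta=\theta f_\theta$ just below that level. I would avoid this by choosing a single comparison map $\tilde f\in C^2_b([0,\infty))$ that coincides with $f_\theta$ on the \emph{entire} half-line $[\mathsf{a},\infty)$ and is extended to $[0,\mathsf{a})$ in any $C^2$, bounded fashion matching $f_\theta$ and its first two derivatives at $\mathsf{a}$. This is admissible precisely because $f_\theta$ is bounded on $[\mathsf{a},\infty)$ and because membership in $C^2_b$ demands boundedness of the function only, not of its derivatives. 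Since $Z$ has no negative jumps, for every $w>\mathsf{a}$ the quantity $\mathscr{L}\tilde f(w)$ depends on $\tilde f$ only through its values on $[w,\infty)\subset[\mathsf{a},\infty)$, whence $\mathscr{L}\tilde f(w)=\mathscr{L}f_\theta(w)=\theta f_\theta(w)$.

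Next I would invoke the local-martingale statement around \eqref{cbc-construction:i:a} with $\alpha=\theta$, $f=\tilde f$ and localizing level $m>z$, and stop it additionally at $\zeta_\mathsf{a}^-$. Writing $\tau:=t\wedge\zeta_\mathsf{a}^-\wedge\zeta_m^+$, the stopped process is bounded on $[0,t]$ (because $\tilde f$ is bounded and $\mathscr{L}\tilde f$, being continuous, is bounded on the compact $[0,m]$ through which $Z$ passes before $\zeta_m^+$), hence a true martingale, and optional sampling at the bounded time $t\wedge\zeta_\mathsf{a}^-$ gives
\[
f_\theta(z)=\tilde f(z)=\mathbb{E}_z\big[e^{-\theta\tau}\tilde f(Z_\tau)\big],
\]
the compensator integral vanishing identically because $Z_s>\mathsf{a}$ for all $s<\tau\le\zeta_\mathsf{a}^-$, where $\mathscr{L}\tilde f-\theta\tilde f=0$.

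Finally I would pass to the limit. Non-explosion yields $\zeta_m^+\uparrow\zeta_\infty=\infty$, so $\tau$ stabilizes at $t\wedge\zeta_\mathsf{a}^-$ and bounded convergence gives $f_\theta(z)=\mathbb{E}_z[e^{-\theta(t\wedge\zeta_\mathsf{a}^-)}\tilde f(Z_{t\wedge\zeta_\mathsf{a}^-})]$. Letting $t\to\infty$, on $\{\zeta_\mathsf{a}^-<\infty\}$ the absence of negative jumps forces $Z_{\zeta_\mathsf{a}^-}=\mathsf{a}$, so the integrand converges to $f_\theta(\mathsf{a})e^{-\theta\zeta_\mathsf{a}^-}$, while on $\{\zeta_\mathsf{a}^-=\infty\}$ the factor $e^{-\theta t}$ drives it to $0$ against the bounded term $\tilde f(Z_t)$; a last bounded-convergence step produces $f_\theta(z)=f_\theta(\mathsf{a})\,\mathbb{E}_z[e^{-\theta\zeta_\mathsf{a}^-}]$ (with the usual convention $e^{-\theta\zeta_\mathsf{a}^-}:=0$ on $\{\zeta_\mathsf{a}^-=\infty\}$). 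Dividing by $f_\theta(\mathsf{a})>0$ delivers \eqref{eq:L-T-of-hitting-time}.
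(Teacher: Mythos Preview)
Your proof is correct and follows essentially the same optional-stopping strategy as the paper: exhibit $f_\theta$ as a $\theta$-eigenfunction of $\mathscr{L}$ on $(z^*,\infty)$, deduce that $e^{-\theta(t\wedge\zeta_\mathsf{a}^-)}f_\theta(Z_{t\wedge\zeta_\mathsf{a}^-})$ is a bounded (local) martingale, and pass to the limit using the absence of negative jumps. The paper is terser---it applies Theorem~\ref{theorem:cbc-sde-contruction} to $f_\theta$ directly and notes the local martingale is bounded by $f_\theta(\mathsf{a})$ via monotonicity, bypassing your explicit $\zeta_m^+$ localization---whereas you are more careful about the technical point that \eqref{cbc-construction:i:a} is stated for $f\in C^2_b([0,\infty))$, which your $C^2_b$ extension $\tilde f$ agreeing with $f_\theta$ on $[\mathsf{a},\infty)$ handles cleanly thanks to the purely upward jump structure of $\mathscr{L}$.
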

\begin{proof}
By Lemmas~\ref{proposition:h-to-f} and~\ref{lemma:f-theta-finite} the map $f_\theta$ is finite and $\mathscr{L}f_\theta=\theta f_\theta$, both on $(z^*,\infty)$. Since $h_\theta$ is not zero, $f_\theta$ is strictly positive everywhere. Besides, $Z_{\zeta^-_\mathsf{a}}=\mathsf{a}$ a.s.-$\PP_z$ on $\{\zeta^-_\mathsf{a}<\infty\}$,  because there are no negative jumps. By Theorem \ref{theorem:cbc-sde-contruction} and the non-explosiveness of $Z$, the process $(e^{-\theta (t\land \zeta^-_\mathsf{a})}f_\theta(Z_{t\land \zeta^-_\mathsf{a}}),t\geq 0)$ is a local martingale, which is bounded by $f_\theta(\mathsf{a})$ (since  $f_\theta$ is decreasing), hence a martingale. Therefore
\[\mathbb{E}_z\big[e^{-\theta (\zeta^-_\mathsf{a}\wedge t)}f_\theta(Z_{\zeta^-_\mathsf{a}\wedge t})\big]=f_\theta(z),\quad t\in [0,\infty).\]
Letting $t$ tend to $\infty$ gives the target identity \eqref{eq:L-T-of-hitting-time}.
\end{proof}
Uniqueness of the solution $h_\theta$  up to a multiplicative constant is settled by

\begin{lemma}\label{lemmauniquenessh} Assume that the \CBC does not explode. Then, up to a multiplicative constant, there is a unique nondecreasing, not zero, nonnegative function $h_\theta$, solution $h$ to $\mathscr{G}h=\theta h.$ 
\end{lemma}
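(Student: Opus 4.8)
The plan is to reduce the uniqueness of $h_\theta$ to the uniqueness of its transform $f_\theta$, which the first-passage data of $Z$ pin down up to a positive scalar. Concretely, suppose $h$ and $\tilde h$ are both nonnegative, not zero, nondecreasing $C^2$ solutions of $\mathscr{G}u=\theta u$ on $(0,\infty)$. To each I would attach the associated transform $f(z):=z\int_0^\infty e^{-zv}h(v)\,\dd v$ and $\tilde f(z):=z\int_0^\infty e^{-zv}\tilde h(v)\,\dd v$ as in \eqref{eq:from h-to-f}. By Lemma~\ref{lemma:f-theta-finite} both are finite on $(z^*,\infty)$, and both are strictly positive there: a nondecreasing, nonnegative, not-zero function is strictly positive on a ray $(v_0,\infty)$, so the Laplace integrals do not vanish.

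Next I would invoke Lemma~\ref{lemmaLT}, available precisely because $Z$ is assumed non-explosive. It gives, for $z^*<\mathsf{a}\le z$, the two representations $f(z)/f(\mathsf{a})=\mathbb{E}_z[e^{-\theta\zeta_\mathsf{a}^-}]=\tilde f(z)/\tilde f(\mathsf{a})$, where the middle quantity is intrinsic to the law of $Z$ and hence does not depend on which solution was used to build the transform. Rearranging yields $f(z)/\tilde f(z)=f(\mathsf{a})/\tilde f(\mathsf{a})$ for all $z^*<\mathsf{a}\le z$. Taking any $z^*<z_1\le z_2$ and applying this with $\mathsf{a}=z_1$, $z=z_2$ shows the ratio $f/\tilde f$ takes the same value at $z_1$ and at $z_2$; hence $f/\tilde f$ equals a single constant $C\in(0,\infty)$ on all of $(z^*,\infty)$, i.e.\ $f=C\tilde f$ there.

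Finally I would transfer this equality back to the eigenfunctions. Since $f(z)/z$ and $\tilde f(z)/z$ are the Laplace transforms of $h$ and $\tilde h$, respectively, the relation $f=C\tilde f$ reads $\int_0^\infty e^{-zv}h(v)\,\dd v=C\int_0^\infty e^{-zv}\tilde h(v)\,\dd v$ for all $z$ in the nonempty interval $(z^*,\infty)$. By the injectivity of the Laplace transform this forces $h=C\tilde h$ Lebesgue-almost everywhere, and then everywhere by the continuity of $h$ and $\tilde h$. This establishes uniqueness up to a positive multiplicative constant.

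I would expect the only delicate points to be bookkeeping rather than substance: one must check that the constant $C$ is genuinely global and not $\mathsf{a}$-dependent (handled by the rearrangement in the second paragraph), and that the transforms are finite and strictly positive so that all ratios are legitimate (handled by Lemma~\ref{lemma:f-theta-finite} together with the ray-positivity of monotone solutions). The essential conceptual step—and the sole place where the non-explosion hypothesis is used—is the appeal to Lemma~\ref{lemmaLT}, which makes $f_\theta$, up to scaling, a functional of the law of $Z$ alone, so that two admissible $h_\theta$'s can differ by nothing more than a constant.
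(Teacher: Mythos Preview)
Your proposal is correct and follows essentially the same approach as the paper: use Lemma~\ref{lemmaLT} (which requires non-explosion) to see that any two admissible $h$'s produce transforms $f$, $\tilde f$ that agree up to a positive constant on $(z^*,\infty)$, and then invoke injectivity of the Laplace transform to pull this back to $h=C\tilde h$. The paper's proof is more terse---it simply notes that $f_\theta$ is ``evidently'' unique up to a constant from the first-passage identity and that finite Laplace transforms on a neighborhood of infinity determine continuous functions uniquely---but the substance is identical to what you have written out.
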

\begin{proof}
Up to a multiplicative constant the function $f_\theta:(z^*,\infty)\to (0,\infty)$ satisfying $\mathbb{E}_z[e^{-\theta \zeta^-_\mathsf{a}}]=\frac{f_\theta(z)}{f_\theta(\mathsf{a})}$ for $\mathsf{a}\leq z$ from $(z^*,\infty)$ is unique evidently. In turn this guarantees the same kind of uniqueness of the nondecreasing, not zero, nonnnegative solution $h$ to $\mathscr{G}h=\theta h$, as if there were two different solutions, Lemma \ref{lemmaLT} would provide two different (in the preceding sense) functions $f_\theta$ (since finite Laplace transforms on a neighborhood of infinity determine continuous functions uniquely).
\end{proof}
All in all, under non-explosion of $Z$ the function $h_\theta$ of Lemma~\ref{proposition:h-to-f} exists uniquely (up to a multiplicative constant) and is strictly increasing and strictly positive everywhere. The proof of Theorem \ref{firstpassagetimestheorem}  follows straightforwardly by combining the above lemmas. 

\begin{remark}\label{Remark0notregular} If the existence of a strictly increasing solution $h:(0,\infty)\to (0,\infty)$ to  $\mathscr{G}h=\theta h$ is never in question, several (differing by more than a multiplicative constant) such solutions exist when the boundary $0$ of $\mathscr{G}$ is regular, see e.g. Borodin and Salminen \cite[Chapter~II, Section~1, Paragraph~10]{MR1912205}. Thus, when $Z$ does not explode, since there is a unique such solution  to $\mathscr{G}h=\theta h$, then the boundary $0$ of $V$ cannot be regular. At this stage though we cannot as yet specify whether the boundary $0$ is natural, entrance or exit, see the forthcoming Remark \ref{Remark0notexit}. Note however that under the assumption of Proposition \ref{propositionsuffcond} the process $Z$ does not explode and it can be checked from Feller's tests on the other hand that the boundary $0$ of $V$ is inaccessible in this case (hence either entrance or natural). 
\end{remark}
 The solution $h_ \theta$ may be represented with the help of $\tau_y$, the first hitting time of $y$ by the diffusion $V$. Namely we have, for $v<y$ from $(0,\infty)$,
\begin{equation}\label{LTV}
\mathbb{E}_v[e^{-\theta \tau_y}]=\frac{h_\theta(v)}{h_\theta(y)}.
\end{equation}
 Here, as usual, the subscript $v$ in the expectation indicates the starting value of $V$. 
\subsection{Extinction: proof of Theorem \ref{LTextinctiontheorem}}\label{secproofextinction}
We focus here on extinction under the assumption of non-explosion. We first verify  \eqref{idLT} in case $z^*=0$.
For sure $\zeta_{0+}:=\text{$\uparrow $-$\lim_{a\downarrow 0}$} \zeta_a^-\leq \zeta_0^-$. On $\{\zeta_{0+}=\infty\}$  trivially $\zeta_0^-=\infty=\zeta_{0+}$; on $\{\zeta_{0+}<\infty\}$, due to quasi-left continuity and the absence of negative jumps, a.s. $$Z_{\zeta_{0+}}=\lim_{\mathsf{a}\downarrow 0} Z_{\zeta_\mathsf{a}^-}=\lim_{\mathsf{a}\downarrow 0} \mathsf{a}=0,$$
and thus $\zeta_{0+}\geq \zeta_0^-$, which ensures that (again) $\zeta_{0+}=\zeta_0^-$. Hence, by letting $\mathsf{a}$ go to $0$ in \eqref{eq:L-T-of-hitting-time}, we have
\[\mathbb{E}_z[e^{-\theta \zeta_0^-}]=\frac{f_\theta(z)}{f_\theta(0+)}.\]
Besides, from \eqref{eq:from h-to-f}, $f_\theta(0+)=h_{\theta}(\infty)$. Therefore
\begin{align}\mathbb{E}_z[e^{-\theta \zeta_0^-}]&=\int_{0}^{\infty}ze^{-zx}\frac{h_\theta(x)}{h_\theta(\infty)}\ddr x. \label{explosionV=extinctionZ}
\end{align}
Thanks to \eqref{LTV}  we may indeed rewrite   this as 
\[\mathbb{E}_z[e^{-\theta \zeta_0^-}]=\mathbb{E}[e^{-\theta \tau_\infty^{\mathbbm{e}_z}}],\]
where $\tau_\infty^{\mathbbm{e}_z}$ is the explosion time of the diffusion $V$ started from an independent exponential random variable with parameter $z$. 

We proceed to  study accessibility of the boundary $0$ of the CBC. If $z^*=0$ then by letting $\theta$ go to $0$ in \eqref{explosionV=extinctionZ}, we see that it is accessible if and only if $\infty$ is accessible for the diffusion $V$. Recall the scale and speed measures of $V$, $S_V$ and $M_V$, given in \eqref{scalemeasure} and \eqref{speedmeasure} respectively.  Define
\begin{equation}\label{I}
\mathcal{I}:=\int^{\infty}_{x_0}S_V(x,\infty)\ddr M_V(x)=\int^\infty_{x_0} e^{Q(u)}\left(\int_u^{\infty}\frac{e^{-Q(x)}}{\Sigma(x)}\ddr x\right)\ddr u,  
\end{equation}
where $$Q(u):=\int_{x_0}^{u}\frac{\Psi(v)}{\Sigma(v)}\ddr v,\quad u\in (0,\infty).$$
Feller's classification ensures that $h_\theta(\infty)<\infty$ (i.e. $\infty$ accessible for $V$) if and only if $\mathcal{I}<\infty$ see e.g. \cite[Lemma 6.2, page 230]{zbMATH03736679}. We are left to show that $\mathcal{I}<\infty$ if and only if $\Psi(\infty)=\infty$ and $\int^{\infty}\frac{\ddr u}{\Psi(u)}<\infty$ (Grey's condition), indeed thanks to Proposition~\ref{propositionstatespace} this will handle also  \eqref{equiv}-\eqref{idLT} for the case when $z^*>0$.

Assume $\Psi(\infty)<\infty$ in the first instance, so that $-\Psi$ is the Laplace exponent of a subordinator. Since $-\Psi(v)\geq 0$ for all $v\geq 0$, we get the following lower bound, 
\label{page:computation-I}
\[\mathcal{I}=\int_{x_0}^{\infty} \left(\int_{u}^{\infty}\frac{e^{-\int_{u}^x\frac{\Psi(v)}{\Sigma(v)}\ddr v}}{\Sigma(x)}\ddr x\right)\ddr u\geq \int_{x_0}^{\infty} \left(\int_{u}^{\infty}\frac{\ddr x}{\Sigma(x)}\right)\ddr u= \int^{\infty}_{x_0}\frac{x-x_0}{\Sigma(x)}\ddr x=\infty,\]
where in the penultimate equality we have applied Tonelli's theorem, and we recall that $\Sigma(x)\leq \mathsf{C}x^2$ for all $x\in [x_0,\infty)$ for some constant $ \mathsf{C}<\infty$, which gives finally the divergence of the integral.

Now assume that $\Psi(\infty)=\infty$. Recall that $\Psi$ is positive increasing on $(\rho,\infty)$, where $\rho\in [0,\infty)$ is the largest zero of $\Psi$; moreover, $(0,\infty)\ni u\mapsto \Psi(u)/u$ is  nondecreasing. We may and do insist that $x_0\in (\rho,\infty)$. There exists $\mathsf{c}>0$ such that for all $u\in [x_0,\infty)$,  $\Psi(u)\geq \mathsf{c}u$. Then, for all $x\in[ x_0,\infty)$, $Q(x)=\int_{x_0}^{x}\frac{\Psi(u)}{\Sigma(u)}\ddr u\geq \mathsf{c}\int_{x_0}^{x}\frac{u}{\Sigma(u)}\ddr u$. 
 Therefore, $Q(x)\geq \frac{\mathsf{c}}{\mathsf{C}}\log x$ for all $x\in [x_0,\infty)$, in particular $Q(\infty)=\infty$. 

For typographical ease set also $$\varphi(u):=\int_u^{\infty}\frac{1}{\Sigma(x)}e^{-Q(x)}\ddr x\leq \int_u^{\infty}\frac{\ddr x}{x^{\mathsf{c}/\mathsf{C}}\Sigma(x)}<\infty,\quad u\in [x_0,\infty).$$ 
Then note that
\begin{align*}
\left(e^{Q(u)}\varphi(u)\right)'&=Q'(u)e^{Q(u)}\varphi(u)+e^{Q(u)}\varphi'(u)\\
&=Q'(u)e^{Q(u)}\varphi(u)-e^{Q(u)}\frac{1}{\Sigma(u)}e^{-Q(u)}\\
&=\big(\Psi(u)e^{Q(u)}\varphi(u)-1\big)\frac{1}{\Sigma(u)},\quad u\in [x_0,\infty).
\end{align*}
Hence, for $x\in [x_0,\infty)$, 
\begin{align}\label{twoterms}
\int_{x_0}^{x}\varphi(u)e^{Q(u)}\ddr u&=\int_{x_0}^{x}\frac{\ddr u}{\Psi(u)}+\int_{x_0}^{x}\left(e^{Q(u)}\varphi(u)\right)'\frac{\Sigma(u)}{\Psi(u)}\ddr u.
\end{align}
Furthermore, since $\Psi(u)\leq \Psi(x)$ for all $x\geq u\geq x_0$ and $Q(\infty)=\infty$, 
$$\Psi(u)e^{Q(u)}\varphi(u)=\Psi(u)e^{\int_{x_0}^{u}\frac{\Psi(v)}{\Sigma(v)}\dd v}\int_{u}^{\infty}\frac{1}{\Sigma(x)}e^{-\int_{x_0}^{x}\frac{\Psi(v)}{\Sigma(v)}\dd v}\ddr x\leq e^{\int_{x_0}^{u}\frac{\Psi(v)}{\Sigma(v)}\dd v}\int_{u}^{\infty}\frac{\Psi(x)}{\Sigma(x)}e^{-\int_{x_0}^{x}\frac{\Psi(v)}{\Sigma(v)}}\ddr x=1.$$
Thus $\left(e^{Q(u)}\varphi(u)\right)'\leq 0$ for all $u\in [x_0,\infty)$ and
\[\int_{x_0}^{x}\varphi(u)e^{Q(u)}\ddr u\leq \int_{x_0}^{x}\frac{\ddr u}{\Psi(u)},\quad x\in [x_0,\infty).\]
Hence, if Grey's condition holds, namely $\int_{x_0}^{\infty}\frac{\ddr u}{\Psi(u)}<\infty$, then $\mathcal{I}<\infty$ and the process $Z$ goes extinct. We now study the other direction of the equivalence and assume $\mathcal{I}<\infty$, the question being whether collisions can cause extinction in CBC processes for which Grey's condition is not fulfilled.
With $\mathsf{c}$ and $\mathsf{C}$ as above,  $\frac{\Sigma(u)}{\Psi(u)} \leq \frac{\mathsf{C}}{\mathsf{c}} u$ for all $u\in [x_0,\infty)$; by \eqref{twoterms},
\begin{align*}
\int_{x_0}^{x}\varphi(u)e^{Q(u)}\ddr u&\geq \int_{x_0}^{x}\frac{\ddr u}{\Psi(u)}+\int_{x_0}^{x}\left(e^{Q(u)}\varphi(u)\right)' \frac{\mathsf{C}}{\mathsf{c}} u \ddr u,\quad x\in [x_0,\infty).
\end{align*}
Via integration by parts:
$$\int_{x_0}^{x}\left(e^{Q(u)}\varphi(u)\right)' u \ddr u=\left[e^{Q(u)}\varphi(u)u  \right]_{u=x_0}^{u=x}-\int_{x_0}^{x}e^{Q(u)}\varphi(u)\ddr u,\quad x\in [x_0,\infty).$$
Combining the preceding two displays we get
\begin{align*}
(1+\mathsf{C}/\mathsf{c})\int_{x_0}^{x}\varphi(u)e^{Q(u)}\ddr u&\geq \int_{x_0}^{x}\frac{\ddr u}{\Psi(u)}-\frac{\mathsf{C}}{\mathsf{c}}\varphi(x_0) x_0  ,\quad x\in [x_0,\infty).
\end{align*}
Thus, on letting $x$ tend to $\infty$, by monotone convergence, if $\mathcal{I}<\infty$, i.e.  $\int_{x_0}^{\infty}\varphi(u)e^{Q(u)}\ddr u<\infty$,  then also $\int_{x_0}^{\infty}\frac{\ddr x}{\Psi(x)}<\infty$. \qed
\section{Laplace/Siegmund Duality and limiting distribution}\label{sectionduality}
\subsection{Laplace duality at the level of semigroups: proof of Proposition \ref{Laplacedualsemigroup}}
We start with a lemma ensuring that the diffusion $U$ does not explode, as was previously announced. Together with the assumed non-explosivity of $Z$ it will play a key role in establishing the Laplace duality. Recall $\mathscr{A}g=\Sigma g''-\Psi g'$ for $g\in C^2([0,\infty))$, the generator of $U$.
\begin{lemma}\label{lemma:U-not-explocive} 
The boundary $\infty$ of $U$ is inaccessible.
\end{lemma}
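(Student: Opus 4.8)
The plan is to run Feller's boundary test for $U$ after identifying its scale and speed measures, and to observe that the Siegmund swap makes these precisely the speed and scale measures of $V$. Solving $\mathscr{A}s=\Sigma s''-\Psi s'=0$ gives the scale density $S_U'(x)=e^{Q(x)}$, with $Q(u)=\int_{x_0}^u\frac{\Psi(v)}{\Sigma(v)}\ddr v$ as in Section~\ref{secproofextinction}, and hence the speed density $M_U'(x)=\frac{1}{\Sigma(x)S_U'(x)}=\frac{e^{-Q(x)}}{\Sigma(x)}$. Comparing with \eqref{scalemeasure}--\eqref{speedmeasure} one reads off $S_U(\ddr x)=M_V(\ddr x)$ and $M_U(\ddr x)=S_V(\ddr x)$. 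By the same Feller criterion already invoked for $V$ in Section~\ref{secproofextinction} (namely \cite[Lemma~6.2, page~230]{zbMATH03736679}), the boundary $\infty$ of $U$ is inaccessible if and only if
\[
\mathcal{I}_U:=\int_{x_0}^\infty S_U(x,\infty)\,M_U(\ddr x)=\int_{x_0}^\infty M_V(x,\infty)\,S_V(\ddr x)=\int_{x_0}^\infty M_V(x,\infty)\frac{e^{-Q(x)}}{\Sigma(x)}\ddr x=\infty.
\]
The whole matter thus reduces to establishing this divergence, and everything will hinge on the elementary bound $\Sigma(x)\leq \mathsf{C}x^2$ on $[x_0,\infty)$ recorded in the extinction proof.

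Next I would split according to whether $M_V(x_0,\infty)$ is infinite. If $M_V(x_0,\infty)=\infty$ then $M_V(x,\infty)=\infty$ for every finite $x$, the integrand of $\mathcal{I}_U$ is identically $+\infty$, and there is nothing more to prove (equivalently $S_U(\infty)=M_V(x_0,\infty)=\infty$, so $\infty$ lies at infinite scale distance and can be neither exit nor regular). Using that $t\mapsto\Psi(t)/t$ is nondecreasing, this disposes at once of the whole non-subordinator case as well as of $\Psi=0$: there $\Psi(t)\geq \mathsf{c}t$ eventually, whence $Q(t)\geq \frac{\mathsf c}{\mathsf C}\log t+\mathrm{const}$ and $\int^\infty e^{Q}=\infty$, exactly as in Section~\ref{secproofextinction}.

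The substance is therefore the remaining case $M_V(x_0,\infty)<\infty$, which by the previous paragraph can only occur in the subordinator case with $\Psi\not\equiv 0$; there $Q$ is nonincreasing with $Q(\infty)=-\infty$. Writing $\phi(x):=M_V(x,\infty)e^{-Q(x)}=\int_x^\infty e^{Q(t)-Q(x)}\ddr t$, so that $\mathcal{I}_U=\int_{x_0}^\infty \phi(x)/\Sigma(x)\,\ddr x$, I would exploit the exact identity
\[
\int_x^\infty \frac{-\Psi(t)}{\Sigma(t)}\,e^{Q(t)-Q(x)}\ddr t=-\big[e^{Q(t)-Q(x)}\big]_{t=x}^{t=\infty}=1,
\]
which is valid because $\frac{\ddr}{\ddr t}e^{Q(t)-Q(x)}=\frac{\Psi(t)}{\Sigma(t)}e^{Q(t)-Q(x)}$ and $Q(\infty)=-\infty$. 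Since $-\Psi$ is the Laplace exponent of a subordinator and $\Sigma$ a L\'evy--Khintchine function, $\frac{-\Psi(t)}{t}$ is nonincreasing and $\frac{\Sigma(t)}{t}$ nondecreasing, so $t\mapsto \frac{-\Psi(t)}{\Sigma(t)}$ is nonincreasing; bounding $\frac{-\Psi(t)}{\Sigma(t)}\leq \frac{-\Psi(x)}{\Sigma(x)}$ for $t\geq x$ inside the identity yields $1\leq \frac{-\Psi(x)}{\Sigma(x)}\phi(x)$, i.e. $\phi(x)\geq \frac{\Sigma(x)}{-\Psi(x)}$. Consequently $\mathcal{I}_U\geq \int_{x_0}^\infty \frac{\ddr x}{-\Psi(x)}$, and since a subordinator exponent grows at most linearly, $-\Psi(x)=O(x)$, the last integral diverges, which closes the argument.

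I expect the main obstacle to be exactly this final case: producing the lower bound $\phi(x)\geq \Sigma(x)/(-\Psi(x))$ cleanly. The integral identity together with the convexity-driven monotonicity of $-\Psi/\Sigma$ is what makes it work, and it is there that the subordinator structure of $\Psi$ and the at-most-linear growth of $-\Psi$ enter decisively; the non-subordinator case, by contrast, is essentially free because the scale function of $U$ already diverges at $\infty$.
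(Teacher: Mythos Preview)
Your proof is correct and structurally identical to the paper's: both argue via Feller's test that $\mathcal{J}=\int_{x_0}^\infty\frac{1}{\Sigma(x)}\int_x^\infty e^{Q(y)-Q(x)}\ddr y\,\ddr x=\infty$ (your $\mathcal{I}_U$ is precisely this $\mathcal{J}$), and both dispose of the non-subordinator case immediately before turning to the subordinator case as the substantive one.

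The only genuine difference is in how the inner integral $\phi(x)=\int_x^\infty e^{Q(t)-Q(x)}\ddr t$ is bounded below in the subordinator case. The paper picks $\gamma<\lim_{u\to\infty}\Psi(u)/u\leq 0$, uses $\Psi(u)\geq \gamma u$ and $u/\Sigma(u)\leq x/\Sigma(x)$ to get $Q(t)-Q(x)\geq \frac{\gamma x}{\Sigma(x)}(t-x)$, and then integrates the exponential explicitly to obtain $\phi(x)\geq \Sigma(x)/(-\gamma x)$, whence $\mathcal{J}\geq \int^\infty\frac{\ddr x}{-\gamma x}=\infty$. You instead use the exact identity $\int_x^\infty \frac{-\Psi(t)}{\Sigma(t)}e^{Q(t)-Q(x)}\ddr t=1$ together with the monotonicity of $-\Psi/\Sigma$ to obtain the sharper bound $\phi(x)\geq \Sigma(x)/(-\Psi(x))$, whence $\mathcal{J}\geq \int^\infty\frac{\ddr x}{-\Psi(x)}=\infty$ by linear growth of $-\Psi$. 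Your bound is tighter and the argument arguably cleaner; the paper's is more elementary in that it avoids the integral identity and proceeds by direct exponential estimates. Both land on the same harmonic-type divergence.
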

\begin{proof}
Consider the Feller test 
for the boundary $\infty$ of $U$ to be accessible. Set
\[\mathcal{J}:=\int_{x_0}^{\infty}\frac{\ddr x}{\Sigma(x)}\int_{x}^{\infty}\exp\left(\int_{x}^{y}\frac{\Psi(u)}{\Sigma(u)}\ddr u\right)\ddr y.\]
Then the boundary $\infty$ is inaccessible for $U$ if and only if $\mathcal{J}=\infty$. The non-subordinator case for which $\Psi>0$ in a neighbourhood of $\infty$ satisfies clearly $\mathcal{J}=\infty$. Assume now that $-\Psi$ is the Laplace exponent of a subordinator. Note that $\frac{\Psi(u)}{u}\underset{u\rightarrow \infty}{\longrightarrow} \textbf{d}\in (-\infty, 0 ]$. Let $\gamma\in (-\infty,\textbf{d})$; then there is a large enough $u_0\in [0,\infty)$ such that for all $u\in[ u_0,\infty)$ we have $\Psi(u)\geq  \gamma u$. Furthermore, since $\Sigma$ is convex, the map $(0,\infty)\ni u\mapsto \frac{\Sigma(u)}{u}$ is non-decreasing and thus for  $u\geq x$ from $(0,\infty)$ we get  $\frac{u}{\Sigma(u)}\leq \frac{x}{\Sigma(x)}$.
Hence, for $x\in [u_0,\infty)$, since $\gamma<0$,
\begin{align*}
\exp\left(\int_{x}^{y}\frac{\Psi(u)}{\Sigma(u)}\ddr u\right)\geq \exp\left(\int_{x}^{y}\frac{\gamma u}{\Sigma(u)}\ddr u\right)\geq \exp\left(\frac{\gamma x}{\Sigma(x)}(y-x)\right).
\end{align*}
Besides,
\[\int_{x}^{\infty}\exp\left(\frac{\gamma x}{\Sigma(x)}y\right)\ddr y=\frac{\Sigma(x)}{-\gamma x}\exp\left(\frac{\gamma x^2}{\Sigma(x)}\right)\] and
therefore
\begin{align*} \mathcal{J}&\geq\int_{x_0}^{\infty}\frac{\ddr x}{\Sigma(x)}\left(\int_{x}^{\infty}\exp\left(\frac{\gamma x}{\Sigma(x)}y\right)\ddr y\right) \exp\left(\frac{-\gamma x^2}{\Sigma(x)}\right)\\
&=\int_{x_0}^{\infty}\frac{\ddr x}{\Sigma(x)}\frac{\Sigma(x)}{-\gamma x}\exp\left(\frac{\gamma x^2}{\Sigma(x)}\right) \exp\left(\frac{-\gamma x^2}{\Sigma(x)}\right)\\
&=\int_{x_0}^{\infty}\frac{\ddr x}{-\gamma x}\ddr x=\infty,
\end{align*}
which concludes the argument.
\end{proof}
\noindent \textbf{Proof of Proposition \ref{Laplacedualsemigroup}}: We may and do assume $z>0$, $x>0$.  We work under a probability under which $Z$ and $U$ are independent processes starting at $z$ and $x$ respectively and apply the duality result of Ethier and Kurtz \cite[Corollary~4.4.15]{ethier}. Recall  $Z$ does not explode by assumption, while $U$ does not explode by Lemma~\ref{lemma:U-not-explocive}. 


 Let  $\mathsf{a}< x<\mathsf{b}$ be from  $(0,\infty)$ and put $\sigma_{\mathsf{a},\mathsf{b}}:=\sigma_\mathsf{a}^-\land \sigma_\mathsf{b}^+$, where $\sigma_\mathsf{c}^\pm:=\inf\{t\in [0,\infty):\pm	U_t\geq \pm \mathsf{c}\}$, $\mathsf{c}\in [0,\infty)$. In the notation of \cite[Corollaries~4.4.14~and~4.4.15]{ethier} take then $E_1:=E_2:=[0,\infty)$, $X:=Z$,  $Y:=U$, $\FF$ the natural filtration of $X$, $\GG$ the natural filtration of $Y$, $\alpha:=\beta:=0$, $\tau:=\infty$, $\sigma:=\sigma_{\mathsf{a},\mathsf{b}}$ and, for $(x',y')\in [0,\infty)\times [0,\infty)$,
\begin{align*}
f(x',y')&:=e^{-x'y'},\\
h(x',y')&:=g(x',y'):=\mathscr{L}_{x'}f(x',y')=({x'}^2\Sigma(y')+x'\Psi(y'))e^{-x'y'}=\mathscr{A}_{y'}f(x',y').
\end{align*}
Note that  $f$ is bounded. The function $g$ ($=h$) is bounded separately in each coordinate, but in general not globally; nevertheless it is bounded on $[0,\infty)\times [\mathsf{a},\mathsf{b}]$ (since $\Psi$ and $\Sigma$ are continuous on $(0,\infty)$ and the maps $[0,\infty)\ni u\mapsto ue^{-u}$ and $[0,\infty)\ni u\mapsto u^2e^{-u}$ are bounded), which is why we (have had to) employ $\sigma_{\mathsf{a},\mathsf{b}}$. Furthermore, from Corollary~\ref{corollaray:genZ} it follows that,  for each $y'\in [0,\infty)$, the process 
\begin{equation*}
\left(f(X_t,y')-\int_0^{t}g(X_s,y')\dd s,t\geq 0\right)
\end{equation*}
 is an $\FF$-martingale. On the other hand the process
 \begin{equation*}
\left(f(x',Y_{t\land \sigma_{\mathsf{a},\mathsf{b}}})-\int_0^{t\land \sigma_{\mathsf{a},\mathsf{b}}} h(x',Y_s)\dd s,t\geq 0\right)
\end{equation*}
is  a $\GG$-martingale for all $x'\in [0,\infty)$. Combining all of the preceding we infer that  the conditions of \cite[Corollary~4.15]{ethier} are met and we get that for all $t\in [0,\infty)$,\label{page:duality-computation}
\begin{align*}
&\EE_z[e^{-x Z_t}]-\EE_x[e^{-x U_{t\land \sigma_{\mathsf{a},\mathsf{b}}}}]=\EE[e^{-xX_t}]-\EE[e^{-zY_{t\land \sigma_{\mathsf{a},\mathsf{b}}}}]\\
&=\int_0^t\EE\left[\left(1-\mathbbm{1}_{[0,\sigma_{\mathsf{a},\mathsf{b}}]}(t-s)\right)e^{-X_{s}Y_{(t-s)\land \sigma_{\mathsf{a},\mathsf{b}}}}\left(X_{s}^2\Sigma(Y_{(t-s)\land \sigma_{\mathsf{a},\mathsf{b}}})+X_{s}\Psi(Y_{(t-s)\land \sigma_{\mathsf{a},\mathsf{b}}})\right)\right]\dd s\\
&=\int_0^t\EE\left[e^{-X_sY_{ \sigma_{\mathsf{a},\mathsf{b}}}}\left(X_s^2\Sigma(Y_{ \sigma_{\mathsf{a},\mathsf{b}}})+X_s\Psi(Y_{ \sigma_{\mathsf{a},\mathsf{b}}})\right);\sigma_{\mathsf{a},\mathsf{b}}<t-s\right]\dd s\\
&=\EE\left[\int_0^{t-\sigma_{\mathsf{a},\mathsf{b}}\land t}e^{-Z_sU_{ \sigma_{\mathsf{a},\mathsf{b}}}}\left(Z_s^2\Sigma(U_{ \sigma_{\mathsf{a},\mathsf{b}}})+Z_s\Psi(U_{ \sigma_{\mathsf{a},\mathsf{b}}})\right)\dd s\right]\\
&=\EE[e^{-\mathsf{a}Z_{t-\sigma_\mathsf{a}^-}}-e^{-\mathsf{a}Z_0};\sigma_\mathsf{a}^-\leq\sigma_\mathsf{b}^+\land t]+\EE[e^{-\mathsf{b}Z_{t-\sigma_\mathsf{b}^+}}-e^{-\mathsf{b}Z_0};\sigma_\mathsf{b}^+\leq\sigma_\mathsf{a}^-\land t],
\end{align*}
where the last equality follows from the constancy in time of the expectation of the $\FF$-martingales noted above and by independence of $U$ from $Z$. In the preceding display we may now let $\mathsf{b}\to\infty$ and $\mathsf{a}\downarrow 0$ (in this order) and using the fact that neither $Z$ (for the first term) nor $U$ (for the second term)  explode, we get the Laplace duality between the non-explosive \CBC process $Z$ and the minimal diffusion $U$ (absorbed at $0$):
\begin{equation}\label{dualidentityproof} \mathbb{E}_z[e^{-xZ_t}]=\mathbb{E}_{x}[e^{-zU_{t}}]. \qed
\end{equation} 

\begin{remark}
Assume $Z$ does not explode. Laplace duality of $Z$ with $U$ on the level of the semigroups entails  that the set $\mathcal{C}\cap C_0([0,\infty))$ of Example~\ref{example:in-domain-genZ} is invariant (in the sense that it is closed under the action  of the semigroup of $Z$); it is also dense in $C_0([0,\infty))$ by the ``locally compact'' version of Stone-Weierstrass. Therefore, see e.g. Kallenberg \cite[Proposition~19.9]{Kallenberg}, it is a core for the infinitesimal generator of the Feller process $Z$ on  $C_0([0,\infty))$. 
\end{remark}

\subsection{Siegmund duality: proof of Proposition~\ref{Siegmundualsemigroup}}
Recall that $V$ is the (minimal) diffusion with generator $\mathscr{G}$. The assumptions $S_V(0,x_0]=\infty$ and $S_V(x_0,\infty)=\infty$, which are in effect, entail that $V$ has boundaries $0$ and $\infty$ inaccessible: in other words either entrance or natural. An application of \cite[Theorem 6.1]{foucart2021local} ensures that the Siegmund dual process $U$ of $V$, i.e. the process such that for all $x,y\in (0,\infty)$ and $t\geq 0$:
\begin{equation}\label{Siegmundinlemma}\mathbb{P}_x(U_t<y)=\mathbb{P}_y(V_t>x)\end{equation}
is indeed our diffusion with generator $\mathscr{A}$.   Finally, we know that under the assumption $S_V(0,x_0]=\infty$  the process $Z$ does not explode. Therefore, applying Proposition \ref{Laplacedualsemigroup}, introducing an exponential random variable $\mathbbm{e}_z$ with parameter $z$ independent of $U$, and plugging it into \eqref{Siegmundinlemma}, we get 
\begin{equation}\label{lastidentitybeforeinvariant} \mathbb{E}_z[e^{-xZ_t}]=\mathbb{E}_x[e^{-zU_t}]=\mathbb{P}_x(\mathbbm{e}_z>U_t)=\int_{0}^{\infty}ze^{-zy}\mathbb{P}_y(V_t>x)\ddr y. \qed
\end{equation}
\begin{remark}\label{Remark0notexit} 
Siegmund duality exchanges the nature of the boundaries (the scale and speed measures are interchanged), see \cite[Table 7]{foucart2021local}.  
We observed in Remark~\ref{Remark0notregular} that, under the assumption of non-explosion of $Z$, the boundary $0$ of $V$ is not regular, and noted that it can therefore be either natural, entrance or exit. The exit option is precluded, since if $Z$ does not explode and $0$ is an exit for $V$, then by Siegmund duality, $0$ is an entrance for $U$, and letting $x$ tend to $0$ in the Laplace duality \eqref{dualidentityproof} yields $\mathbb{P}_z(Z_t<\infty)=\mathbb{E}_{0+}[e^{-zU_t}]<1$, which contradicts the non-explosivity of $Z$.  Together with Remark~\ref{Remark0notregular}, it establishes in fact that inaccessibility of $\infty$ for $Z$ automatically entails inaccessibility of $0$ for $V$. Establishing whether the latter is even an equivalence does not seem to follow easily from our approach. We have also seen in Lemma~\ref{lemma:U-not-explocive} that the boundary $\infty$ of $U$ is either natural or entrance. By Siegmund duality, $V$ has therefore its boundary $\infty$ either natural or exit. In particular, it is important to note that under the assumption of non-explosion of $Z$ the diffusion with generator $\mathscr{G}$ is uniquely specified 
since neither one of its boundaries is regular.
\end{remark}
\subsection{Limiting distribution: proof of Theorem \ref{stationarydisttheorem}}
By assumption $S_V(0,x_0]=\infty$ and $S_V(x_0,\infty)=\infty$, which ensures that $V$ is a regular recurrent diffusion on $(0,\infty)$. In this setting, the only possible invariant measure for $V$ is its speed measure $M_V$.
If the latter is finite on $(0,\infty)$, the diffusion $V$ is positive recurrent and converges in distribution towards the law $\frac{M_V(\ddr v)}{M_V(0,\infty)}$, see e.g. Rogers and Williams \cite[Theorem 54.5, page 303]{zbMATH01515832}. Under this proviso we see by letting $t$ tend to $\infty$ in \eqref{lastidentitybeforeinvariant} that for any $x\in [0,\infty)$ and $z\in (0,\infty)$:
$$\mathbb{E}_z[e^{-xZ_t}]\underset{t\rightarrow \infty}{\longrightarrow} \frac{M_V(x,\infty)}{M_V(0,\infty)}.$$
We now study the case in which $M_V$ gives an infinite mass to $(0,\infty)$. It is slightly easier to work directly with the dual diffusion $U$. Recall that, up to a multiplicative constant (we avoid making this reservation explicit below) $M_V=S_U$, where $S_U$ is the scale measure of $U$. The following three  cases may occur, see e.g. Karatzas and Shreve \cite[Proposition 5.22 page 345]{karatzas}.
\begin{enumerate}[(i)]
\item If $S_U(0,x_0]=M_V(0,x_0]<\infty$ and $S_U(x_0,\infty)=M_V(x_0,\infty)=\infty$, then, for all $x\in [0,\infty)$,
\[\mathbb{P}_x(\underset{t\rightarrow \infty}{\lim}\ U_t=0)=1;\]
hence, by \eqref{dualidentityproof},
$\underset{t\rightarrow \infty}{\lim}\  \mathbb{E}_z[e^{-xZ_t}]=1$ and $Z_t$ converges in probability towards $0$ as $t$ tends to infinity.
\item If $S_U(0,x_0]=M_V(0,x_0]=\infty$ and $S_U(x_0,\infty)=M_V(x_0,\infty)<\infty$, then, for all $x\in (0,\infty)$,
\[\mathbb{P}_x(\underset{t\rightarrow \infty}{\lim}\ U_t=\infty)=1;\]
hence, by \eqref{dualidentityproof},
$\underset{t\rightarrow \infty}{\lim}\  \mathbb{E}_z[e^{-xZ_t}]=0$ and $Z_t$ converges in probability towards $\infty$. 
\item  If $S_U(0,x_0]=M_V(0,x_0]=\infty$ and $S_U(x_0,\infty)=M_V(x_0,\infty)=\infty$ then $U$ is recurrent and
by the interchange of scale and speed measures the assumption $S_V(0,x_0]=\infty$ and $S_V(x_0,\infty)=\infty$ implies $M_U(0,x_0]=\infty$ and $M_U(x_0,\infty)=\infty$, where $M_U$ denotes the speed measure of $U$. We see therefore that $U$ is a null recurrent diffusion without a limiting distribution on $[0,\infty]$. A final application of \eqref{dualidentityproof} entails that $Z$ cannot have a limiting distribution. $\qed$
\end{enumerate}

\subsection{Characterizing CBCs: proof of Theorem~\ref{thm:laplace-converse}}
Recall that the usual meaning attached to  $Z$, $\mathscr{L}$, $\mathscr{A}$, $\Sigma$ and $\Psi$ in this paper is suspended in the context of Theorem~\ref{thm:laplace-converse}. 

We start by establishing a lemma of independent interest, specifying how the generator of a positive Feller process with no negative jumps may act on exponential functions.

\begin{lemma}[Courrège form on exponentials]\label{lemmacourrege} Assume that $Z$ is a positive Feller process with no negative jumps, $0$ absorbing, and infinitesimal generator $\mathscr{L}$, whose domain includes the  Schwartz space of rapidly decaying functions on $[0,\infty)$. For $x\in (0,\infty)$ let $e_x:=([0,\infty)\ni z\mapsto e^{-xz})$ be the exponential function (of rate $x$). Then, for any $f\in C^\infty_c([0,\infty))\cup\{e_x: x\in (0,\infty)\}$, the generator $\mathscr{L}$ acts on $f$ as follows: 
\begin{equation}\label{eq.courrege-extendedlemma}
 \mathscr{L}f(z)=\int\left(f(z+h)-f(z)-hf'(z)\mathbbm{1}_{(0,1]}(h)\right)\nu(z,\ddr h)+a(z)f''(z)+b(z)f'(z)-c(z)f(z),
 \end{equation}
with $\nu(z,\ddr h)$ a Lévy measure on $(0,\infty)$, $a(z)\in[ 0,\infty)$, $b(z)\in \mathbb{R}$, $c(z)\in [0,\infty)$ for $z\in [0,\infty)$ and $a(0)=b(0)=c(0)=\nu(0,\ddr h)=0$.
\end{lemma}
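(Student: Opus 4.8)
The plan is to obtain \eqref{eq.courrege-extendedlemma} from the structure theory of Feller generators (the Courrège--von Waldenfels theorem) and then to carry the formula, established first for compactly supported test functions, over to the exponentials $e_x$. As the infinitesimal generator of a Feller semigroup, $\mathscr{L}$ obeys the positive maximum principle: whenever $f$ in its domain attains a nonnegative maximum at $z_0$, then $\mathscr{L}f(z_0)\le 0$. Since $C^\infty_c([0,\infty))$ lies in the Schwartz space of rapidly decaying functions, hence in $\mathrm{Dom}(\mathscr{L})$, and $\mathscr{L}$ sends its domain into $C_0([0,\infty))$, the restriction $\mathscr{L}|_{C^\infty_c([0,\infty))}$ is a linear map into $C([0,\infty))$ satisfying the positive maximum principle. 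Courrège's theorem then furnishes, for each $z$, coefficients $a(z)\ge 0$, $b(z)\in\RR$, $c(z)\ge 0$ and a Lévy kernel $\nu(z,\ddr h)$ for which \eqref{eq.courrege-extendedlemma} holds on $C^\infty_c([0,\infty))$; crucially, the signs of $a,c$ and the nonnegativity of $\nu$ are part of that theorem's conclusion.

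Next I would pin down the support of $\nu$, which is where the two standing hypotheses on $Z$ enter. Confinement to $[0,\infty)$ rules out jumps below $0$, so $\nu(z,(-\infty,-z))=0$; the absence of negative jumps gives $\nu(z,(-z,0))=0$ as well. For the latter I would test \eqref{eq.courrege-extendedlemma} against a nonnegative $f\in C^\infty_c([0,\infty))$ supported in $[0,z-\delta]$, for which $f(z)=f'(z)=f''(z)=0$ and hence $\mathscr{L}f(z)=\int_{[-z,-\delta]}f(z+h)\,\nu(z,\ddr h)\ge 0$; comparing with the probabilistic expression $\mathscr{L}f(z)=\lim_{t\downarrow 0}t^{-1}\EE_z[f(Z_t)]$ and using that, $Z$ having no negative jumps, its running infimum is a.s.\ continuous so that $\PP_z(Z_t\le z-\delta)=o(t)$, I would conclude $\mathscr{L}f(z)=0$, whence $\nu(z,[-z,-\delta])=0$ and, on letting $\delta\downarrow 0$, $\nu(z,(-z,0))=0$. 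For the boundary point, $0$ being absorbing forces $\mathscr{L}f(0)=0$ for every domain function; feeding in test functions supported away from $0$ annihilates $\nu(0,\cdot)$, and then prescribing the $2$-jet $(f(0),f'(0),f''(0))$ to be $(1,0,0)$, $(0,1,0)$, $(0,0,1)$ successively yields $c(0)=b(0)=a(0)=0$.

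Finally I would extend the representation to $e_x$, $x>0$. Fixing $z$ and taking $f_n:=e_x\chi_n$ with cutoffs $\chi_n\in C^\infty_c([0,\infty))$ equal to $1$ on $[0,n]$, to $0$ beyond $[0,n+1]$, and of uniformly bounded first two derivatives, the function $f_n\in C^\infty_c([0,\infty))$ coincides with $e_x$ near $z$, so the differential part of \eqref{eq.courrege-extendedlemma} is already that of $e_x$ for $n>z$, while the nonlocal part converges to that of $e_x$ by dominated convergence (the integrand is controlled by $\frac{x^2}{2}h^2$ on $(0,1]$ and by $2$ on $(1,\infty)$, both $\nu(z,\cdot)$-integrable). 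There remains the passage $\mathscr{L}f_n(z)\to\mathscr{L}e_x(z)$: writing $g_n:=e_x-f_n\ge 0$, a domain function vanishing on $[0,n]$ and bounded by $e^{-xn}$, the jump-kernel interpretation of $\mathscr{L}$ on functions that vanish near $z$ gives $\mathscr{L}g_n(z)=\int g_n(z+h)\,\nu(z,\ddr h)\le e^{-xn}\nu(z,(1,\infty))\to 0$, so that $\mathscr{L}e_x(z)=\lim_n\mathscr{L}f_n(z)$ equals the Courrège expression for $e_x$.

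I expect the real work to sit in the last two steps: extracting the small-time bound $\PP_z(Z_t\le z-\delta)=o(t)$ from the absence of negative jumps (equivalently, showing that the probabilistic jump kernel of $Z$ charges only $(0,\infty)$), and controlling $\mathscr{L}f_n(z)\to\mathscr{L}e_x(z)$ for the non-compactly-supported exponential via the jump-kernel interpretation of $\mathscr{L}$. The appeal to Courrège's theorem and the boundary analysis are routine by comparison.
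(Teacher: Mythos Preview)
Your overall strategy matches the paper's: apply Courrège on $C^\infty_c$, show the Lévy kernel is supported on $(0,\infty)$, then extend to the exponentials. Two points deserve comment.

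For the support of $\nu$, the paper sidesteps the small-time estimate $\mathbb{P}_z(Z_t \leq z-\delta)=o(t)$ entirely by using Dynkin's characteristic operator: for $f$ supported in $[0,z-\delta]$ and $r<\delta$, at the first exit time from $(z-r,z+r)$ the process is either $\geq z+r$ or, by absence of negative jumps, exactly at $z-r$, so $f(Z_{\text{exit}})=0=f(z)$ and hence $\mathscr{L}f(z)=0$. This is cleaner than extracting an $o(t)$ rate, which you yourself flag as outstanding work. (The paper also first extends $Z$ trivially to $\mathbb{R}$, constant on $(-\infty,0)$, so as to invoke the standard Courrège theorem on $\mathbb{R}$ rather than a half-line version.)

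The more serious gap is in your extension to $e_x$. You write that ``the jump-kernel interpretation of $\mathscr{L}$ on functions that vanish near $z$ gives $\mathscr{L}g_n(z)=\int g_n(z+h)\,\nu(z,\ddr h)$'', but this is precisely the Courrège representation for the \emph{non}-compactly-supported function $g_n=e_x(1-\chi_n)$, which is part of what you are trying to establish. At this stage the Courrège form is known only on $C^\infty_c$; invoking it for $g_n$ is circular. The paper closes this differently, and this is where the hypothesis that the Schwartz space lies in the domain is actually used: one bounds, via the semigroup definition,
\[
|\mathscr{L}(\phi_n e_x)-\mathscr{L}e_x|(z)\leq \limsup_{t\downarrow 0}\frac{\mathbb{E}_z[e^{-xZ_t};\,Z_t>n]}{t}\leq e^{-xn/2}\limsup_{t\downarrow 0}\frac{\mathbb{E}_z[e^{-xZ_t/2};\,Z_t>n]}{t},
\]
and then dominates $e^{-xZ_t/2}\mathbbm{1}_{\{Z_t>n\}}$ (for $n\geq z+1$) by $f(Z_t)$ for a \emph{single} Schwartz function $f$, independent of $n$, that majorizes $e_{x/2}\mathbbm{1}_{[z+1,\infty)}$ and vanishes at $z$. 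Since $f$ is in the domain, the right-hand side is at most $e^{-xn/2}\mathscr{L}f(z)\to 0$. Your target bound $e^{-xn}\nu(z,(1,\infty))$ is morally correct, but the route to it must go through the semigroup and a fixed domain majorant, not through a Courrège identity for $g_n$ that you do not yet possess.
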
 
\begin{proof}
Because $0$ is an absorbing state for $Z$ we may and for a moment do extend it to a Feller process on the whole real line by taking it as constant on $(-\infty,0)$. So extended, its infinitesimal generator includes $C^\infty_c(\mathbb{R})$. From the so-called Courrège form of $\mathscr{L}$, see e.g. B\"ottcher et al.   \cite[Theorem~2.21]{levy-matters-III}, it follows that for $f\in C^\infty_c(\mathbb{R})$,
 \begin{equation}\label{eq.courrege-extended}
 \mathscr{L}f(z)=\int\left(f(z+h)-f(z)-hf'(z)\mathbbm{1}_{\{\vert h\vert\leq 1\}}\right)\nu(z,\ddr h)+a(z)f''(z)+b(z)f'(z)-c(z)f(z)
 \end{equation}
 for certain L\'evy measures $\nu(z,\ddr h)$ on $\mathbb{R}$, diffusion coefficients $a(z)\in [0,\infty)$, drifts $b(z)\in \mathbb{R}$ and killing rates $c(z)\in [0,\infty)$ as $z$ runs over $\mathbb{R}$.   Because $Z$ has no negative jumps we know from applying  Dynkin's characteristic operator \cite[Theorem~1.39]{levy-matters-III} that, for all $z\in (0,\infty)$,  for all $f\in C^\infty_c(\mathbb{R})$ whose support is bounded away on the right from $z$, $\mathscr{L}f(z)=0$: 
the key is simply to note that, for all sufficiently small $r\in (0,\infty)$, at time\footnote{Notwithstanding the first paragraph of this subsection we use $\zeta_{z+r}^+$ and $\zeta_{z-r}^-$ in their  established relation, see  \eqref{eq.upwards-def} \& \eqref{eq.first-passage.def}, vis-\`a-vis the process $Z$.} $\zeta_{z+r}^{+}\wedge \zeta_{z-r}^-$ of first exit from the interval of radius $r$ around $z$ the process $Z$ is either above $z+r$ or equal to $z-r$ by the absence of negative jumps, hence $f(Z_{\zeta_{z+r}^{+}\wedge \zeta_{z-r}^-})=0=f(z)$ a.s.. Therefore, for $z\in (0,\infty)$, $\nu(z,\ddr h)$ is carried by $(0,\infty)$. 

 We now revert back to the non-extended process and observe that \eqref{eq.courrege-extended} then holds true for $f\in C^\infty_c([0,\infty))$. The action of its right-hand side extends naturally to all $f\in C^2_b([0,\infty))$ and we use the symbol $\tilde{\mathscr{L}}$ for the corresponding operator. In short, 
 \begin{equation}\label{eq:L-tilde-L-Ccinf}
 \mathscr{L}=\tilde{\mathscr{L}}\text{ on }C^\infty_c([0,\infty)),
 \end{equation}
  the action of $\tilde{\mathscr{L}}$ on $C^2_b([0,\infty))$ being given by the right-hand side of \eqref{eq.courrege-extended}. Since $0$ is absorbing for $Z$ we may and do take $a(0)=b(0)=c(0)=\nu(0,\dd h)=0$. 

Next, fix $x\in (0,\infty)$ and we show that the equality in \eqref{eq:L-tilde-L-Ccinf} extends also to the exponential map $e_x$. Let $(\phi_n)_{n\in \mathbb{N}}$ be a sequence  in  $C^\infty_c([0,\infty))$ satisfying $\mathbbm{1}_{[0,n]}\leq \phi_n\leq 1$ for all $n\in \mathbb{N}$ and $\lim_{n\to\infty}\phi_n=1$ pointwise (such  smooth transition functions exist, see e.g. \cite[page 49]{conlon}). Then, on the one hand, it is clear by bounded convergence that $\tilde{\mathscr{L}}(e_x\phi_n)\to \tilde{\mathscr{L}}e_x$ pointwise as $n\to\infty$. On the other hand, for all $z\in [0,\infty)$, for all $n\in \mathbb{N}$ with $n\geq z+1$, $$\vert \mathscr{L}(\phi_ne_x)-\mathscr{L}e_x\vert(z)\leq \limsup_{t\downarrow 0}\frac{\EE_z[e^{-xZ_t};Z_t>n]}{t}\leq e^{-xn/2}\lim_{t\downarrow 0}\frac{\EE_z[f(Z_t)]}{t}=e^{-xn/2}\mathscr{L}f(z)$$  by choosing any $f$ from the Schwartz space of rapidly decaying functions on $[0,\infty)$ that majorizes $e_{x/2}\mathbbm{1}_{[z+1,\infty)}$  but vanishes at $z$ (it exists, it may depend on $x$ and $z$, but not on $n$; one way to get it is by multiplying $e_{x/2}$ with a  smooth transition function that vanishes on $[0,z+1/4)$ and is equal to one on $[z+3/4,\infty)$). Letting $n\to\infty$ in the preceding display we deduce that also $\mathscr{L}(e_x\phi_n)\to \mathscr{L}e_x$ pointwise as $n\to\infty$. But $\mathscr{L}(e_x\phi_n)=\tilde{\mathscr{L}}(e_x\phi_n)$ for all $n\in \mathbb{N}$. Hence $\mathscr{L}e_x=\tilde{\mathscr{L}}e_x$. 
\end{proof}

\begin{lemma} Under the assumptions of Lemma \ref{lemmacourrege}, if the generator $\mathscr{L}$ further satisfies the Laplace duality relationship 
\begin{equation}\label{laplacedualitylemma}\mathscr{L}_z e^{-xz}=\mathscr{A}_x e^{-xz}:=z^{2}e^{-xz}\Sigma(x)+ze^{-xz}\Psi(x),\quad\{x,z\}\subset [0,\infty),
\end{equation}
with the generator $\mathscr{A}$ of a one-dimensional diffusion on $[0,\infty)$ having drift $-\Psi$ and non-zero diffusion coefficient $\Sigma$, both assumed to be continuous at $0$, then $\Psi$ and $\Sigma$ are Lévy-Khintchine functions of the form \eqref{branchingmechanism} and \eqref{collisionmechanism}, respectively. 
\end{lemma}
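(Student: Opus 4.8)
The plan is to feed the Courrège representation of Lemma~\ref{lemmacourrege} into the duality \eqref{laplacedualitylemma} and then disentangle the contributions of $\Sigma$ and $\Psi$ by varying $z$. Evaluating $\mathscr{L}e_x(z)$ through \eqref{eq.courrege-extendedlemma} with $e_x(z)=e^{-xz}$ and cancelling the common factor $e^{-xz}>0$ against the right-hand side of \eqref{laplacedualitylemma}, one obtains, for every $z>0$,
\begin{equation*}
\Theta_z(x):=z^2\Sigma(x)+z\Psi(x)=a(z)x^2-b(z)x-c(z)+\int_{(0,\infty)}\left(e^{-xh}-1+xh\mathbbm{1}_{(0,1]}(h)\right)\nu(z,\ddr h),\quad x\geq 0.
\end{equation*}
Taking $x=0$ (equivalently, using $\mathscr{L}1=0$) forces $\Sigma(0)=\Psi(0)=0$ and hence $c(z)=0$. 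Since $a(z)\geq 0$ and $\nu(z,\cdot)$ is a Lévy measure on $(0,\infty)$, the display says precisely that for each fixed $z>0$ the function $\Theta_z$ is a branching mechanism of the form \eqref{branchingmechanism}; in particular it is $C^\infty$ on $(0,\infty)$ and $\Theta_z''(x)=2a(z)+\int_{(0,\infty)}h^2e^{-xh}\nu(z,\ddr h)$ is completely monotone.

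First I would record that $\Sigma$ and $\Psi$ are themselves smooth on $(0,\infty)$: solving the linear system given by $z\in\{1,2\}$ expresses them as linear combinations of the smooth maps $\Theta_1,\Theta_2$. The key structural step is then to show that $\Sigma''$ and $\Psi''$ are completely monotone. For this I would use the scaling limits $\Sigma''=\lim_{z\to\infty}\Theta_z''/z^2$ and $\Psi''=\lim_{z\to 0+}\Theta_z''/z$, both pointwise on $(0,\infty)$: each $\Theta_z''/z^2$ and $\Theta_z''/z$ is a positive multiple of the completely monotone $\Theta_z''$, and the class of completely monotone functions is closed under pointwise limits with finite limit. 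By Bernstein's theorem we may then write $\Sigma''(x)=\int_{[0,\infty)}e^{-xh}\rho_\Sigma(\ddr h)$ and $\Psi''(x)=\int_{[0,\infty)}e^{-xh}\rho_\Psi(\ddr h)$ for positive measures $\rho_\Sigma,\rho_\Psi$, whose atoms at $0$ furnish the nonnegative Gaussian coefficients $a^2:=\rho_\Sigma(\{0\})$ and $\sigma^2:=\rho_\Psi(\{0\})$.

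To recover \eqref{branchingmechanism} for $\Psi$, set $\pi(\ddr h):=h^{-2}\rho_\Psi|_{(0,\infty)}(\ddr h)$. Finiteness of $\Psi''$ gives $\int_{(0,1]}h^2\pi=\rho_\Psi(0,1]<\infty$, while the tail bound $\int_{(1,\infty)}h^{-2}\rho_\Psi<\infty$ follows from $z\rho_\Psi\leq h^2\nu(z,\ddr h)$ on $(0,\infty)$ (a consequence of the identity $h^2\nu(z,\ddr h)=z^2\rho_\Sigma+z\rho_\Psi$, read off by uniqueness from the two Laplace representations of $\Theta_z''$) together with $\nu(z,\cdot)$ being a Lévy measure; hence $\pi$ is a Lévy measure. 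Integrating $\Psi''$ twice and fixing the affine ambiguity through $\Psi(0)=0$, absorbing the remaining linear term into the drift $-b$, yields \eqref{branchingmechanism}. For $\Sigma$ the extra integrability needed for the stronger form \eqref{collisionmechanism} comes, crucially, from nonnegativity rather than from $\nu$: since $\Sigma''\geq 0$ the function $\Sigma$ is convex, so being nonnegative with $\Sigma(0)=0$ it has a finite right derivative $\Sigma'(0+)=:c/2\in[0,\infty)$; writing $\eta(\ddr h):=h^{-2}\rho_\Sigma|_{(0,\infty)}(\ddr h)$ and integrating $\Sigma''$ twice, Tonelli gives $\int_{(0,\infty)}(e^{-xh}-1+xh)\eta(\ddr h)=\Sigma(x)-\tfrac{a^2}{2}x^2-\tfrac{c}{2}x<\infty$, which forces $\int(h\wedge h^2)\eta<\infty$ and delivers exactly \eqref{collisionmechanism}.

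The main obstacle I anticipate lies in the passage to the Lévy measures, and in particular in recognising that the two mechanisms require genuinely different arguments for the large-jump integrability: $\Psi$, a general branching mechanism possibly with $\Psi'(0+)=-\infty$, relies on the Lévy-measure property of the Courrège kernels $\nu(z,\cdot)$, whereas the stronger condition $\int_1^\infty h\,\eta(\ddr h)<\infty$ built into \eqref{collisionmechanism} is available for $\Sigma$ only because its nonnegativity forces $\Sigma'(0+)$ to be finite. Care is also needed in justifying the completely-monotone closure under the scaling limits and the differentiation under the integral sign used throughout.
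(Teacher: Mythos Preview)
Your proof is correct and takes a genuinely different route from the paper's. The paper extends the identity \eqref{eq:gen-evaluated} analytically to imaginary $x$, so that for each fixed $z$ the map $x\mapsto z\Sigma(x)+\Psi(x)$ (resp.\ $\Sigma(x)+z^{-1}\Psi(x)$) is read as the Laplace/characteristic exponent of a spectrally positive L\'evy process; it then lets $z\downarrow 0$ (resp.\ $z\to\infty$) and pushes the limit through using L\'evy's continuity theorem, sequential closure of infinitely divisible laws under weak convergence, and finally convergence of the associated L\'evy processes in Skorohod topology to retain the spectral positivity. Your argument avoids this weak-convergence machinery entirely: you stay on $(0,\infty)$, note that $\Theta_z''$ is completely monotone, use closure of completely monotone functions under pointwise limits to transfer this to $\Sigma''$ and $\Psi''$, and then invoke Bernstein's theorem to read off the L\'evy measures directly. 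This is more elementary and self-contained; the paper's route, by contrast, makes the underlying probabilistic picture (limits of L\'evy processes) explicit. Your treatment of the asymmetry between the two mechanisms --- extracting the large-jump integrability of $\pi$ from the identity $h^2\nu(z,\dd h)=z^2\rho_\Sigma+z\rho_\Psi$ together with the L\'evy-measure property of $\nu(z,\cdot)$, but the stronger first-moment condition $\int_1^\infty h\,\eta(\dd h)<\infty$ from the nonnegativity of $\Sigma$ via finiteness of $\Sigma'(0+)$ --- is correct and exactly the distinction that matters.
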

\begin{proof}
Since $\mathscr{L}1=0$ (recall the comments following Theorem~\ref{thm:laplace-converse}),  from \eqref{laplacedualitylemma}, on setting $x=0$ we get $\Sigma(0)=0=\Psi(0)$. Furthermore, \eqref{laplacedualitylemma} and \eqref{eq.courrege-extendedlemma} tell us that for $\{x,z\}\subset ( 0,\infty)$, 
\begin{align}\label{eq:gen-evaluated}
\nonumber z^2\Sigma(x)+z\Psi(x)&=e^{xz}\mathscr{A}_xe^{-xz}=e^{xz}\mathscr{L}_ze^{-xz}\\
&=\int_{0}^{\infty}(e^{-xh}-1+xh\mathbbm{1}_{\{h\leq 1\}})\nu(z,\ddr h)-b(z)x+a(z)x^2-c(z).
\end{align} 
Letting $x\downarrow 0$ renders $c(z)=0$ by continuity of $\Sigma$ and $\Psi$ at $0$. 
Then  dividing by $z$ we get
\begin{equation}\label{divbyz} z\Sigma(x)+\Psi(x)=\int_{0}^{\infty}(e^{-xh}-1+xh\mathbbm{1}_{\{h\leq 1\}})z^{-1}\nu(z,\ddr h)-z^{-1}b(z)x+z^{-1}a(z)x^2.
\end{equation}
For any fixed $z$ the right-hand side  of \eqref{divbyz} is analytic in $x\in \mathbb{C}$ with $\Re(x)> 0$ and continuous in $x\in \mathbb{C}$ with $\Re(x)\geq 0$. Considering two different $z$ we deduce that $\Sigma$ and $\Psi$ admit unique extensions to continuous maps defined on $\{\Re\geq 0\}$, analytic on $\{\Re>0\}$. By analytic continuation and continuity at fixed $x$ \eqref{divbyz} then obtains for all $x\in \mathbb{C}$ with $\Re(x)\geq 0$,  for imaginary $x$ in particular.



It now follows that the characteristic functions of the infinitely divisible distributions whose Laplace exponents are given by the right-hand sides of \eqref{divbyz}  converge weakly as $z\downarrow 0$ towards a continuous function, namely $(\mathbb{R}\ni x\mapsto e^{\Psi(-\mathrm{i} x)})$. By L\'evy's continuity theorem it implies weak convergence of the infinitely divisible distributions, and since the latter are sequentially closed under weak convergence, Sato \cite[Lemma~2.7.8]{MR3185174}, we deduce that the limiting distribution is itself infinitely divisible, i.e. $(\mathbb{R}\ni x\mapsto \Psi(-\mathrm{i} x))$ is the characteristic exponent of a L\'evy process. We also know that convergence of characteristic exponents of L\'evy processes implies their weak convergence for the Skorohod topology, see e.g. Jacod and Shiryaev \cite[Corollary~VII.3.6]{jacod1987limit}. This allows us to infer \cite[Corollary~VI.2.8]{jacod1987limit} finally that $\Psi$ is   L\'evy-Khintchine of the spectrally positive type, i.e. it takes the form \eqref{branchingmechanism}. 

%
Similarly, dividing again by $z$ in \eqref{divbyz} we get
\begin{equation}\label{divbyz2} \Sigma(x)+z^{-1}\Psi(x)=\int_{0}^{\infty}(e^{-xh}-1+xh\mathbbm{1}_{\{h\leq 1\}})z^{-2}\nu(z,\ddr h)-z^{-2}b(z)x+z^{-2}a(z)x^2.
\end{equation}
Applying the very same reasoning but this time with $z\to\infty$ in lieu of $z\downarrow 0$ allows  to conclude that $\Sigma$ is  Lévy-Khintchine of the spectrally positive type as per \eqref{collisionmechanism}.
\end{proof}
\noindent \textbf{Proof of Theorem~\ref{thm:laplace-converse}}: Returning now to \eqref{eq:L-tilde-L-Ccinf} \& \eqref{eq:gen-evaluated}, since a L\'evy-Khintchine function determines the associated L\'evy triplet uniquely, it follows that $\mathscr{L}$ acts on $C_ c^\infty([0,\infty))$ according to  \eqref{genZ}.\qed


\vspace{0.5cm}
\noindent\textbf{Acknowledgements.} Ren\'e Schilling has the authors' thanks for a stimulating exchange on the topic of the Courr\`ege form of the infinitesimal generators of Feller processes.  MV acknowledges support from the Slovenian Research Agency (project No. N1-0174 and programme No. P1-0402).

\bibliography{doku}
\bibliographystyle{plain}

\end{document}